\renewcommand{\marginpar}[1]{}
\numberwithin{equation}{section}
\theoremstyle{plain}
\newtheorem{proposition}{Proposition}[section]
\newtheorem{theorem}[proposition]{Theorem}
\newtheorem{lemma}[proposition]{Lemma}
\newtheorem{corollary}[proposition]{Corollary}
\theoremstyle{definition}
\newtheorem{definition}[proposition]{Definition}
\newcommand{\C}{\mathbb{C}}
\newcommand{\N}{\mathbb{N}}
\newcommand{\Z}{\mathbb{Z}}
\newcommand{\R}{\mathbb{R}}
\renewcommand{\S}{\mathbb{S}}
\DeclareMathOperator{\lcm}{lcm}
\newcommand{\Cal}{\mathcal}
\def \<{\langle}
\def \>{\rangle}
\DeclarePairedDelimiter\floor{\lfloor}{\rfloor}
\title[Beatty Sequences]
{Expansions of the Group of Integers by Beatty Sequences}
\begin{document}

\author {Ayhan G\"unayd\i n}

\address{Department of Mathematics, Bo\u{g}azi\c{c}i University, Bebek, Istanbul, Turkey}

\email{ayhan.gunaydin@boun.edu.tr}

\author {Melissa \"Ozsahakyan}

\address{Department of Mathematics, Bo\u{g}azi\c{c}i University, Bebek, Istanbul, Turkey}

\email{assilem89@gmail.com}

\begin{abstract}
We study the model theoretic structure $(\Z,+,P_r)$ where $r>1$ is an irrational number and the elements of $P_r$ are of the form $\floor{nr}$ for some $n\in\Z\setminus\{0\}$. We axiomatize of this structure and prove a quantifier elimination result. As a consequence, we get that definable subsets are not sparse unless they are finite. We also prove that there are no reducts of this structure expanding $(\Z,+)$.
\end{abstract}

\bibliographystyle{plain}

\maketitle

\section{Introduction}
We investigate the expansion of the abelian group of integers by the following subset:

 \[
  P_r:=\Big\{\floor{nr}:n\in\Z\setminus\{0\}\Big\},
 \]
where $r>1$ is an irrational number.
The number $0$ is taken out for some technical reasons and of course it has no effect on our results. There is a considerable difference between this expansion and the expansion by $P_r^+:=P_r\cap \N$, because $(\Z,+,P_r^+)$ defines the ordering of $\Z$, whereas we show in Corollary~\ref{ordering_cor} that $(\Z,+,P_r)$ does not define the ordering. We also know that $P_r$ is not definable in $(\Z,+,<)$, since it is shown in \cite{Conant_no_intermediate} that the only reduct of $(\Z,+,<)$ defining addition is $(\Z,+)$.

There have been some work on expansions of $(\Z,+)$ by a predicate by Poizat , Palac{\'i}n-Sklinos, and Lambotte-Point (\cite{Poizat_Exp_Integers,Palacin-Sklinos,Lambotte-Point}). In all these, the predicate is \emph{sparse}, with certain different but similar meanings of the word sparse.  Later those results were generalized by Conant (in \cite{Conant_multiplicative}), who proved that the expansion of $(\Z,+)$ by an infinite subset $A$ of a submonoid of $(\N_+,\cdot)$ is superstable of $U$-rank $\omega$. Hence in that setting, sparsity comes from the multiplicative structure. 

Our study of the expansion $(\Z,+,P_r)$ is complimentary to the work mentioned above, as the set $P_r$ is certainly not sparse: If $k\in P_r$, then the next element in $P_r$ is either $k+\floor{r}$ or $k+\floor{r}+1$. (This is also the reason of $(\Z,+,P_r^+)$ defining the ordering.)  Actually, we prove that no infinite subset of $\Z$ definable in $(\Z,+,P_r)$ is sparse in the following sense:

\begin{theorem}\label{th_1}
Let $X\subseteq \Z$ be infinite and definable in $(\Z,+,P_r)$. Then there is $N=N(X)\in\N_+$ such that for any $x\in X$, one of the integers $x+1,x+2,\dots,x+N$ is in $X$.
\end{theorem}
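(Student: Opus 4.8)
Granting the quantifier elimination result for $(\Z,+,P_r)$ announced above, the plan is to reduce to the following concrete statement: every \emph{infinite} set defined by a finite Boolean combination of atoms of the three shapes $cx+d=e$, $cx+d\equiv e\pmod m$ and $cx+d\in P_r$ (with $c,d,e\in\Z$, $m\in\N_+$) is \emph{syndetic}, meaning it meets $\{x+1,\dots,x+N\}$ for some fixed $N$ and every $x\in\Z$; this is formally stronger than the assertion of the theorem. The only property of $P_r$ I would use is the elementary identity
\[
P_r=\Big\{m\in\Z:\ \text{the fractional part of } m/r\ \text{lies in}\ \big(1-\tfrac1r,\,1\big)\Big\},
\]
which holds because, for $m\neq0$, having $m=\floor{nr}$ for some $n\ne 0$ amounts to the interval $[m/r,(m+1)/r)$ of length $1/r<1$ containing a necessarily nonzero integer, equivalently $\mathrm{frac}(m/r)>1-1/r$; for $m=0$ both sides fail. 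Writing $\theta:=1/r\in(0,1)\setminus\Q$, this presents $\{x:cx+d\in P_r\}$ with $c\ne 0$ as the preimage of a fixed open arc of $\R/\Z$ under the rotation $x\mapsto (c\theta)x+d\theta$ by the irrational number $c\theta$; a congruence atom is the same with a rational rotation number, and an equation $cx+d=e$ with $c\ne 0$ defines a single point.

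The reduction has two steps. First, each equation atom defines a single point, the empty set, or all of $\Z$; so, off the finite set $F_0$ of these points, every equation atom has constant truth value, and a given definable $X$ agrees off $F_0$ with a set defined by congruence and $P_r$ atoms alone. Second, let $M$ be the least common multiple of the congruence moduli that occur, and split $\Z$ into residue classes modulo $M$: on a class $\rho+M\Z$ each congruence atom is constant, so, substituting $x=\rho+My$, the set $X\cap(\rho+M\Z)$ becomes — up to the removed points of $F_0$ — empty or of the form
\[
Y_\rho=\{\,y\in\Z:\ (y\delta+\beta \bmod \Z^k)\in B\,\},
\]
where every coordinate of $\delta=(\delta_1,\dots,\delta_k)$ is a nonzero integer multiple of $\theta$, $\beta\in\R^k$, and $B\subseteq(\R/\Z)^k$ is a Boolean combination of products of arcs. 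It then suffices to prove that each $Y_\rho$ is finite or syndetic: if $X$ is infinite it meets some class $\rho+M\Z$ in an infinite, hence syndetic, set, and then $X$ is syndetic after adjusting the constant for $M$ and for the finitely many removed points.

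For the dynamical heart: the closure of $\{y\delta \bmod \Z^k:y\in\Z\}$ is a closed subgroup of $(\R/\Z)^k$, and since all $\delta_i$ are rational multiples of the single irrational $\theta$, it is a circle $\mathcal C$ on which translation by $\delta$ is minimal; the orbit $\{y\delta+\beta:y\in\Z\}$ is dense in the coset $\mathcal C+\beta$. Intersecting a product of arcs with this circle gives a finite union of arcs of $\mathcal C+\beta$, and the Boolean algebra these generate consists of finite unions of arcs plus finitely many isolated points; hence $B\cap(\mathcal C+\beta)$ either contains an arc of positive length or has empty interior in $\mathcal C+\beta$. In the former case, minimality of the rotation on $\mathcal C+\beta$ — equivalently, the three-gap theorem on $\mathcal C$ — bounds the return time to that arc uniformly, so $Y_\rho$ is syndetic. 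In the latter case $B\cap(\mathcal C+\beta)$ is finite, and since each fibre $\{y:y\delta+\beta=p\}$ is empty or a singleton (each $\delta_i$ being irrational), $Y_\rho$ is finite. This completes the dichotomy, and with it the proof.

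The genuinely substantial ingredient is the quantifier elimination itself, which I take as given; the remainder is elementary, but two points deserve attention. First, the rotation numbers $c\theta$ coming from different $P_r$ atoms are rationally dependent, so the dichotomy must be carried out on the proper subtorus $\mathcal C$ that they generate, not on the full torus $(\R/\Z)^k$. Second, one must keep track of the finitely many exceptional integers — those excised by equation atoms, and $m=0$, which is harmless and in any case not in $P_r$ — so that they do not spoil the final uniform gap bound $N(X)$. Should the quantifier-elimination language carry extra primitives (for instance a function returning the successor of an element within $P_r$), one checks that the corresponding atoms are again preimages of arcs under rotations by rational multiples of $\theta$, and the argument is unaffected.
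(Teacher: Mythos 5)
Your argument is correct and follows essentially the same route as the paper's: quantifier elimination reduces $X$, after splitting into congruence classes (the paper's $N$-th-root decomposition of $h(X)$ inside $\Gamma_r\le\S$), to a finite union of preimages of arcs under the dense embedding $n\mapsto n\theta$ of $\Z$ into a circle, together with finitely many exceptional points, and bounded return times to an arc (your three-gap/minimality step, the paper's Lemma~\ref{uniform_gaps_convex}) give the uniform gap bound. The differences are cosmetic: you work additively on the torus rather than multiplicatively on $\S$, and you record the slightly stronger syndeticity conclusion.
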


The proof of this result goes through  a quantifier elimination result. In order to state that, let $L_\pm$ be the extension of the language $\{+,-,0,1\}$ of abelian groups with a distinguished element by unary predicate symbols $D_{m,+}$ and $D_{m,-}$ for each $m\geq 1$.   We interpret the new symbols in $\Z$ as follows:
 \[
 D_{m,+}^\Z:=\{x\in\Z: x=my \text{ for some }y\in P_r\},
 \]
and
 \[
 D_{m,-}^\Z:=\{x\in\Z: x=my \text{ for some }y\notin P_r\}.
 \]
 Note that $D_{1,+}^\Z=P_r$ and that $D_{m,+}^\Z\cup D_{m,-}^\Z=m\Z$.

\begin{theorem}\label{th_2}
The structure $\Big(\Z,+,0,1,(D_{m,+}^\Z)_{m\geq 1},(D_{m,-}^\Z)_{m\geq 1}\Big)$ has quantifier elimination.
\end{theorem}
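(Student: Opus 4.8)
The plan is to establish quantifier elimination for $T:=\mathrm{Th}\big(\Z,+,0,1,(D_{m,+}^\Z)_m,(D_{m,-}^\Z)_m\big)$ by the standard reduction to one existential quantifier: it suffices to show that every formula $\exists y\,\varphi(y,\bar x)$ with $\varphi$ a conjunction of $L_\pm$-literals is $T$-equivalent to a quantifier-free formula. The additive reduct $(\Z,+,0,1,(m\Z)_m)$ --- Presburger arithmetic with the order deleted --- already has quantifier elimination, and $m\Z=D_{m,+}^\Z\cup D_{m,-}^\Z$ is quantifier-free in $L_\pm$, so all the work is in the interaction of the quantifier with $P_r=D_{1,+}^\Z$ and its dilates; put $\alpha:=1/r\in(0,1)$. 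First I would eliminate equations: if $\varphi$ contains a literal $by=s(\bar x)$ with $b\neq 0$, then a solution is unique and exists exactly when $D_{b,+}(s)\vee D_{b,-}(s)$, and granting this one substitutes and rewrites every remaining literal $D_{m,\pm}(cy+u)$ (negations likewise) as $D_{bm,\pm}(cs+bu)$, legitimate since $bm\mid cs+bu$ and $\tfrac{cs+bu}{bm}=\tfrac{cy+u}{m}$ whenever $by=s$. So we may assume $\varphi$ imposes no equation on $y$; it is then a conjunction of literals $D_{m,\varepsilon}(b_jy+u_j)$, their negations, and disequations $by\neq s$. The divisibility content of these literals (together with $D_{m,+}\cup D_{m,-}=m\Z$) confines $y$ to a union of congruence classes modulo $M:=\lcm$ of the moduli present; splitting into a finite disjunction over admissible residues $c$, writing $y=c+Mw$, and recording a quantifier-free congruence condition on $\bar x$, the problem becomes: for $\Z$-linear terms $\hat u_j(\bar x)$ and fixed nonzero integers $\hat b_j$, decide whether there is $w$ with $D_{m_j,\varepsilon_j}^\Z(\hat u_j(\bar x)+\hat b_jw)$ for all $j$ (and $w$ avoiding finitely many values). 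One must keep the predicates $D_{m,\pm}$ here, since dividing by $m$ is not a $\Z$-linear operation --- this is exactly why the language was enriched by them. Everything thus reduces to the following \emph{Key Lemma}: the set of $\bar x$ for which such $w$ exists --- equivalently, for which infinitely many exist, apart from a quantifier-free ``sporadic'' locus treated directly --- is quantifier-free $L_\pm$-definable.

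The engine for the Key Lemma is the Beatty/Sturmian description of $P_r$. A short computation gives
\[
P_r=\big\{\,z\in\Z:\{z\alpha\}\in[1-\alpha,1)\,\big\}\setminus\{-1\},
\]
so, crucially, $P_r$ carries \emph{no sign condition}: up to the single exceptional point $-1$ it is the preimage under $z\mapsto z\alpha$ of the half-open arc $J:=[1-\alpha,1)\subseteq\R/\Z$ \emph{whose two endpoints $-\alpha$ and $0$ both lie in $\Z\alpha\pmod 1$}. Consequently $D_{m,+}^\Z(x)\iff m\mid x\ \wedge\ \{x\alpha/m\}\in J\ \wedge\ x\neq -m$. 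Substituting $y=c+Mw$ and setting aside the finitely many $w$ for which some $\hat u_j+\hat b_jw$ equals $-m_j$ (each such value contributes a disjunct in which all the conditions become plain quantifier-free statements in $\bar x$), the system says precisely that $\{w\alpha\}$ lies in an explicitly describable finite union of arcs $D\subseteq\R/\Z$ whose endpoints all lie in $\tfrac1B(\Z+\alpha\Z)$, where $B:=\lcm_j(\hat b_j)$. Since $\alpha$ is irrational, $\{w\alpha:w\in\Z\}$ is equidistributed, so such $w$ exist --- and then infinitely many do --- if and only if $D$ has nonempty interior, which differs from $D\neq\emptyset$ only on a ``degenerate'' locus where two of the finitely many endpoints coincide: a $\Z$-linear equation in $\bar x$, treated separately. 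It remains to show that ``$D\neq\emptyset$'' is quantifier-free $L_\pm$-definable in $\bar x$.

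By elementary circle combinatorics $D\neq\emptyset$ is a Boolean combination of cyclic-order comparisons among the endpoints of the arcs comprising $D$; each such endpoint has the special shape $\tfrac1B(p+q\alpha)$ with $p$ a bounded integer and $q$ a $\Z$-linear term in $\bar x$, and this is what makes the comparisons tractable. After passing to the $B$-fold refinement --- which is exactly what the dilated predicates $D_{m,\pm}$ were introduced to handle --- each comparison unwinds to a condition asserting that $\{t(\bar x)\alpha\}$ lies in an arc of $\R/\Z$ whose endpoints again lie in $\Z\alpha$, for a $\Z$-linear term $t$; and such a condition is quantifier-free, by telescoping: $\{t\alpha\}\in[1-n\alpha,1)\iff\bigvee_{\ell=0}^{n-1}\big(t+\ell\in P_r\cup\{-1\}\big)$ for $n\alpha\le 1$ (each sub-arc $[1-(\ell+1)\alpha,1-\ell\alpha)$ being $J$ translated by $-\ell\alpha$, with no wrap-around), the dilates of these equivalences being $D_{B,\pm}$-statements (possibly after an auxiliary congruence split of $\bar x$ modulo $B$). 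I expect the main obstacle to be precisely the bookkeeping that makes this run cleanly: reconciling the several moduli by common refinements so that every comparison really does land on a $D_{m,\pm}$-recognizable condition; the wrap-around regime $n\alpha>1$, where the arc is all of $\R/\Z$ and one needs that any $\floor{r}+1$ consecutive integers meet $P_r$ (immediate from $\floor{(n+1)r}-\floor{nr}\in\{\floor{r},\floor{r}+1\}$); the degenerate loci; the sporadic solutions; and the exceptional point $-1$. Finally, to obtain quantifier elimination for $T$ rather than just for the structure, one must choose the axioms so that all these equivalences hold in arbitrary, possibly non-standard, models --- in particular the equidistribution input must appear as a first-order scheme asserting that the relevant sets are neither finite nor cofinite along every arithmetic progression --- which simultaneously yields completeness of $T$. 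One could instead organize the proof via the substructure (back-and-forth) criterion for quantifier elimination, but its combinatorial core would be the same Beatty-sequence analysis.
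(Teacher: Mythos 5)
Your proposal is essentially sound, and its combinatorial engine coincides with the paper's: the identity $n\in P_r\Leftrightarrow\{n/r\}>1-\tfrac1r$, the translation of each condition on $y$ into membership of $e(y/r)$ in a finite union of circle arcs with endpoints in $\tfrac1B(\Z+\tfrac1r\Z)$ (the sets $U_{a,k}$, $V_{a,k}$ of Proposition~\ref{main_prop} and Corollary~\ref{main_cor}), the endpoint-comparison criterion for a nonempty intersection of arcs (Proposition~\ref{intersection_of_intervals}), density of $\Z\tfrac1r$ modulo $1$ in place of your equidistribution, and your telescoping identity, which is exactly Lemma~\ref{cosets}; your ``sporadic'' and ``degenerate'' loci correspond to the finite boundary sets $V_{\vec a,\vec k,J}$. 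Where you genuinely differ is the top-level organization. You eliminate a single existential quantifier syntactically, which forces you to push the dilated predicates $D_{m,\pm}$ applied to general terms $by+u$ through the whole circle analysis; reconciling the modulus $m$ with the coefficient $b$ (note that $\{\tfrac{b}{m}w\alpha\}$ is not a function of $\{w\alpha\}$ alone) is exactly the bookkeeping you flag, and it is real work, of the same kind as the $\gcd$ manipulations in Lemmas~\ref{U's} and~\ref{V's}. The paper instead proves the elimination statement only for the plain pattern formulas $\phi_{\vec k,I}$ (the predicate $P$ applied to $x_i+k_iy$, no dilates) and bootstraps to full quantifier elimination in $L_\pm$ via a back-and-forth system between $\aleph_0$-saturated models: passing to the pure subgroup generated by the parameters lets one divide by $n$ \emph{inside the model}, replacing $D_{n,\pm}(a+ky)$ by $a'+k'y'\in P$ for new parameters $a'$, so the dilated predicates never enter the circle computation and are used only once, to see that the pure closures of a common $L_\pm$-substructure of two models are $L_P$-isomorphic. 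That route also yields the axiomatization of Theorem~\ref{th_3} for free, which your closing remark about non-standard models would otherwise oblige you to redo; for Theorem~\ref{th_2} itself, which concerns a single structure, your direct argument suffices and no first-order equidistribution scheme is needed.
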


A particular formula with quantifiers is 
 \[
 \exists y\left(\bigwedge_{i\in I}x_i+k_iy\in P\wedge\bigwedge_{j\notin I}x_j+k_jy\notin P\right),
 \]
where $k_1,\dots,k_n\in\Z$ and $I\subseteq\{1,\dots,n\}$.  So we have a quantifier-free $L_\pm$-formula $\psi_{\vec k,I}(\vec x)$ such that 
 \[
(\star) \forall x_1\cdots\forall x_n\left(\exists y\left(\bigwedge_{i\in I}x_i+k_iy\in P\wedge\bigwedge_{j\notin I}x_j+k_jy\notin P\right)\leftrightarrow\psi_{\vec k,I}(\vec x)\right).
 \]
holds in $\Big(\Z,+,0,1,(D_{m,+}^\Z)_{m\geq 1},(D_{m,-}^\Z)_{m\geq 1}\Big)$. As a matter of fact, $\psi_{\vec k,I}$ can be chosen to be a formula in the language $L_P=\{+,-,0,1,P\}$.

\medskip\noindent
With this notation at hand, we have the following axiomatization.

\begin{theorem}\label{th_3}
Let $\Cal M=\big(M,+,-,0,1,P^\Cal M\big)$ be an $L_P$-structure. Then $\Cal M$ is elementarily equivalent to $(\Z,+,-,0,1,P_r)$ if and only if the following hold
 \begin{enumerate}
  \item $(M,+,-,0,1)\equiv(\Z,+,-,0,1)$,
  \item for every $k_1,\dots,k_n\in\Z$ and $I\subseteq\{1,\dots,n\}$ the sentence $(\star)$ holds in $\Cal M$,
  \item $k\in P_r$ if  and only if $k\in P^\Cal M$ for every $k\in\Z$. 
 \end{enumerate}
\end{theorem}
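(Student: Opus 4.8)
The forward direction is immediate. Each of (1)--(3) is a (scheme of) first-order $L_P$-sentence(s) holding in $(\Z,+,-,0,1,P_r)$: item~(1) is the statement that $\Cal M$ models every $\{+,-,0,1\}$-sentence true in $\Z$; item~(2) makes sense for $L_P$-structures precisely because, as noted above, each instance of $(\star)$ is an $L_P$-sentence, and all of them hold in $(\Z,+,-,0,1,P_r)$ by Theorem~\ref{th_2}; item~(3) is, for each $k\in\Z$, the $L_P$-sentence ``$\bar k\in P$'' or its negation, according to whether $k\in P_r$. Hence all three transfer to any model elementarily equivalent to $(\Z,+,-,0,1,P_r)$.

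For the converse, assume $\Cal M$ satisfies (1)--(3), and write $T_{\mathrm{gp}}$ for the union of $\operatorname{Th}(\Z,+,-,0,1)$ with the scheme $(\star)$; thus $\Cal M\models T_{\mathrm{gp}}$ by (1) and (2), and $(\Z,+,-,0,1,P_r)\models T_{\mathrm{gp}}$ (trivially, and by Theorem~\ref{th_2}). The plan is to prove a relative quantifier elimination: every $L_P$-formula $\phi(\vec x)$ is equivalent, over $T_{\mathrm{gp}}$, to a Boolean combination of $\{+,-,0,1\}$-formulas (quantifiers allowed) and atomic formulas $t(\vec x)\in P$ with $t$ a term. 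Granting this, apply it to an arbitrary $L_P$-sentence $\varphi$: since no free variables survive, $\varphi$ is $T_{\mathrm{gp}}$-equivalent to a fixed Boolean combination $\varphi^{\ast}$ of $\{+,-,0,1\}$-sentences and sentences ``$\bar k\in P$'', $k\in\Z$. Both $\Cal M$ and $(\Z,+,-,0,1,P_r)$ satisfy $\varphi\leftrightarrow\varphi^{\ast}$ (being models of $T_{\mathrm{gp}}$), and they agree on $\varphi^{\ast}$: the $\{+,-,0,1\}$-conjuncts agree by (1), and each ``$\bar k\in P$'' holds iff $k\in P_r$ --- by (3) on the one side, by definition on the other. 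Therefore $\Cal M\equiv(\Z,+,-,0,1,P_r)$.

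It remains to carry out the relative quantifier elimination by induction on $\phi$; the atomic and Boolean steps are trivial, so the work is the case $\exists y\,\phi'(y,\vec x)$ with $\phi'$ already reduced. Putting $\phi'$ in disjunctive normal form with respect to its $\{+,-,0,1\}$-literals and its atomic $P$-literals and distributing $\exists y$, one is left with disjuncts of the form $\exists y\big(\Gamma(y,\vec x)\wedge\bigwedge_i (k_iy+u_i(\vec x)\in P)^{\epsilon_i}\big)$, where $\Gamma$ is a $\{+,-,0,1\}$-formula, the $u_i$ are terms, and $\epsilon_i\in\{+,-\}$ records positive or negative occurrence; conjuncts with $k_i=0$, and disjuncts in which $y$ occurs in no $P$-literal, are pulled outside the quantifier at once. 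For the rest one uses group reasoning available in $T_{\mathrm{gp}}$ to replace $\Gamma$ by a disjunction of formulas stating a congruence condition on $y$ together with finitely many equations and disequations in $y$; the equation disjuncts remove the quantifier by substitution, and on each congruence class (after, if necessary, a bounded case split on residues) what remains is a conjunction of $P$-literals in $y$ of exactly the form appearing in $(\star)$, so $\exists y(\cdots)$ is replaced by $\psi_{\vec k,I}$ with the variables instantiated by the appropriate terms $u_i(\vec x)$ --- an $L_P$-formula, possibly carrying harmless group quantifiers. This closes the induction.

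The main obstacle is this last reduction: bringing an arbitrary conjunction of $P$-literals in the eliminated variable, burdened with a group constraint $\Gamma$ on that variable, into the rigid syntactic shape of $(\star)$ so that condition~(2) applies. This is essentially a reuse of the argument behind Theorem~\ref{th_2}, and the bookkeeping around congruence conditions and disequations on the eliminated variable, and around the group quantifiers that $\psi_{\vec k,I}$ may contain, needs care; everything else is routine.
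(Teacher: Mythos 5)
Your forward direction is fine, and so is the final step of the converse: once every $L_P$-sentence is known to be $T_{\mathrm{gp}}$-equivalent to a Boolean combination of group sentences and closed $P$-atoms, conditions (1) and (3) force agreement. Your overall strategy --- using the scheme $(\star)$ as the engine of a quantifier elimination --- is also in the spirit of the paper, which takes $T_r:=\operatorname{Th}(\Z,+,-,0,1)\cup\{(\star)\}$, proves quantifier elimination for its definitional expansion $T_\pm$ by a back-and-forth system between countable \emph{pure} substructures of $\aleph_0$-saturated models, and then deduces completeness of $T_r^*$ from the fact that $\mathfrak Z$ is algebraically prime. The problem is that your syntactic execution of the elimination step has two genuine gaps. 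First, disequations. After normalizing the group constraint $\Gamma(y,\vec x)$ over $\operatorname{Th}(\Z,+,-,0,1)$ into congruences, equations and disequations in $y$, each disjunct carries finitely many conditions $y\neq s_j(\vec x)$, and these cannot be discarded: a conjunction of $P$-literals in $y$ can have a finite, nonempty solution set (this is exactly the content of the finite components $V_{\vec a,\vec k,J}$ with $J\neq\emptyset$ and Lemma~\ref{nonempty_finite}), so $\exists y\big(\Phi(y)\wedge\bigwedge_j y\neq s_j\big)$ is strictly weaker than $\exists y\,\Phi(y)$ and is not an instance of $(\star)$. Eliminating it requires the further structural fact that every isolated solution satisfies $k_jy+u_j\in\{0,-1\}$ for some $j$ --- information the bare scheme $(\star)$ does not supply, and which the paper gets for free from purity of the substructures in its back-and-forth.

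Second, and more seriously, your induction does not close. The formula $\psi_{\vec k,I}$ whose existence $(\star)$ asserts is quantifier-free in $L_\pm$, not in your target class: as constructed (via Proposition~\ref{patternQE}, Lemma~\ref{nonempty_finite} and Lemma~\ref{cosets}) it contains conditions of the form ``$m\mid t(\vec x)$ and $t(\vec x)/m\in P$'', i.e.\ the atoms $D_{m,\pm}(t(\vec x))$. Written in $L_P$ such a condition is $\exists z\,(mz=t(\vec x)\wedge z\in P)$ --- a quantifier binding a variable that occurs inside a $P$-atom, not a ``harmless group quantifier.'' So after a single application of $(\star)$ your formula has left the class ``Boolean combination of group formulas and $P$-atoms on terms,'' and the inductive hypothesis no longer applies; the paper remarks explicitly that adding only plain divisibility predicates is not enough for quantifier elimination, which is precisely why $L_\pm$ is introduced. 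To repair the argument you must enlarge the target class to include these bounded-$P$ conditions and re-verify that your elimination step preserves the enlarged class --- at which point you are essentially reproving Theorem~\ref{QE} --- or, more economically, observe that conditions (1)--(3) say exactly that the $L_\pm$-expansion of $\Cal M$ is a model of $T_r^*$ and invoke Theorem~\ref{QE} together with the algebraic primeness of $\mathfrak Z$, which is the paper's route.
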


\medskip\noindent
The technical parts of the proofs are done in an isomorphic structure: Let 
 \[
  \Gamma_r:=\left\{\exp(\frac{n2\pi i}{r})\in\C:n\in\Z \right\}.
 \] 
So $\Gamma_r=h(\Z)$ where $h$ is the group isomorphism sending $n$ to $\exp(\frac{n2\pi i}{r})$. Being a subgroup of the unit circle, $\Gamma_r$ has an \emph{orientation} on it; the precise definition is given in the next section. Then the image of $P_r$ under $h$ becomes an \emph{orientation interval}; see Lemma~\ref{sturm-vs-beatty1}. This makes it easier to work in $\Gamma_r$ and the notations get simpler. For this reason, in Section~\ref{circle_section}, we recall some facts about the circle and its subgroups. 

\medskip\noindent
In Section~\ref{beatty_section}, we introduce Beatty Sequences and prove a few results about them to be used in the model theoretic arguments. 

\medskip\noindent
A back-and-forth system constructed in Section~\ref{back_and_forth_section} is used to prove the theorems mentioned above in the rest of  that section and Section~\ref{definable_sets_section}.

\medskip\noindent
The paper \cite{Tran-Walsberg_expansions_of_Z} by  Tran-Walsberg has quite a bit of overlap with our work. The authors consider $\Z$ equipped with a `cyclic ordering'; which in turn is the same as considering an infinite cyclic subgroup of the circle equipped with the orientation. We elaborate on this in  Section~\ref{definable_sets_section}. 

\medskip\noindent
In the last section, we prove a result analogous to the main result of \cite{Conant_no_intermediate}: there are no intermediate structures between $(\Z,+)$ and $(\Z,+,P_r)$.

\medskip\noindent
\emph{Notations and Conventions.} The set $\N$ of natural numbers contains $0$ and $\N_+=\N\setminus\{0\}$. We let the letters $m,n,k,l$ vary in $\Z$, and if $m$ is in $\N$ (or $\N_+$), we simply write $m\geq 0$ (or $m>0$).

\medskip\noindent
For $n>0$, we denote the set $\{1,\dots,n\}$ as $[n]$. 

\medskip\noindent
For a real number $a$, we use the notation $\floor{a}$ for the largest integer smaller than or equal to $a$ and $\{a\}$ denotes the difference $a-\floor{a}$. (There will not be any occasions where this could be confused with the singleton containing $a$.)

\section{The Circle and Its Subgroups}\label{circle_section}
In the next section, we work with an infinite cyclic subgroup of the unit circle $\S:=\{\beta\in\C:|\beta|=1\}$. One may study such a group in the generality of \emph{oriented abelian groups} as defined in \cite{thesis_ayhan}, however there is no need to do so for our purposes. 

\medskip\noindent
Here we recall some generalities about $\S$ and at the end we say a few words about its subgroups.
\medskip\noindent
Let
 \[
 e:\R\to\S,\quad e(x):=\exp(2\pi i x).
 \]
 This is a surjective group homomorphism with kernel $\Z$. 
 
 \medskip\noindent
We equip $\S$ with the \emph{counter-clockwise orientation}: Given  $\alpha,\beta,\gamma\in\S$, the relation $\Cal O(\alpha,\beta,\gamma)$ holds if and only if there are $x,y,z\in\R$ such that $\alpha=e(x),\beta=e(y),\gamma=e(z)$, $x<y<z$, and $z-x<1$.

\medskip\noindent
 If we fix $\alpha\in\S$, then we get a linear ordering $\Cal O_{\alpha}(\cdot,\cdot):=\Cal O(\alpha,\cdot,\cdot)$ on $\S\setminus\{\alpha\}$. For $\alpha=1$, we denote the ordering of $\S\setminus\{1\}$ by the usual ordering sign:
  \[
  \beta<\gamma\Longleftrightarrow \Cal O(1,\beta,\gamma).
  \]
  
 \noindent
Clearly, if $\alpha<\beta$ and $\beta<\gamma$, then $\Cal O(\alpha,\beta,\gamma)$. So there is no harm in writing 
 \[
 \alpha_1<\alpha_2<\cdots<\alpha_{n-1}<\alpha_n
 \]
when $\alpha_1<\alpha_2,\alpha_2<\alpha_3,\dots,\alpha_{n-1}<\alpha_n$.

\medskip\noindent
We extend the definition of $<$ to all of $\S$ by setting $1<\alpha$ for $\alpha\neq 1$. 

\medskip\noindent
The relation of orientation and the group operation is as follows:
 \[
 \Cal O(\alpha,\beta,\gamma)\Longleftrightarrow\Cal O(\alpha\delta,\beta\delta,\gamma\delta)
 \]
and
 \[
 \Cal O(\alpha,\beta,\gamma)\Longleftrightarrow\Cal O(\gamma^{-1},\beta,^{-1}\alpha^{-1})
 \]
for every $\alpha,\beta,\gamma,\delta\in\S$  .

\medskip\noindent
The circle $\S$ has the topology induced by the Euclidean topology on $\C$ and a basis for this topology consists of \emph{orientation intervals}: Given $\alpha,\beta\in\S$ we define the orientation interval determined by $\alpha$ and $\beta$ to be
 \[
 (\alpha,\beta):=\big\{\gamma\in\S:\Cal O(\alpha,\gamma,\beta)\big\}.
 \]
We do not assume $\alpha<\beta$ for this definition. So both $(\alpha,\beta)$ and $(\beta,\alpha)$ are orientation intervals and they are disjoint. Such an interval is empty only when $\alpha=\beta$. 

\medskip\noindent
We define the \emph{length} of an orientation interval $I=(\alpha,\beta)$ to be 
 \[l(I):=\beta\alpha^{-1}.\]
 So if $\alpha=e(a)$ and $\beta=e(b)$, then the length of $(\alpha,\beta)$ is $e(b-a)$. In particular, the length of an orientation interval is $1$ if and only if it is empty.

\medskip\noindent
 Below, we use the word \emph{interval} to mean \emph{orientation interval}.  We also use the notations $[\alpha,\beta)$, $(\alpha,\beta]$, and  $[\alpha,\beta]$ with the obvious meanings, and we call them \emph{intervals} as well.


\begin{proposition}\label{intersection_of_intervals}
Let $\alpha_1,\dots,\alpha_m,\beta_1,\dots,\beta_m$ be distinct elements of $\S$. Then the following conditions are equivalent:
 \begin{enumerate}
  \item $\displaystyle{\bigcap_{i=1}^m} (\alpha_i,\beta_i)\neq\emptyset$.
  \item there is $i_0\in[m]$ such that $\alpha_{i_0}\in (\alpha_i,\beta_i)$ for every $i\neq i_0$. 
  \item there is $j_0\in[m]$ such that $\beta_{j_0}\in (\alpha_i,\beta_i)$ for every $i\neq j_0$. 
 \end{enumerate}
Moreover, if one of these conditions hold, then 
 \[
  \bigcap_{i=1}^m (\alpha_i,\beta_i)=(\alpha_{i_0},\beta_{j_0}).
 \] 
\end{proposition}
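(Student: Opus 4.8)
The plan is to prove the equivalence $(1)\Leftrightarrow(2)$ first (the equivalence $(1)\Leftrightarrow(3)$ is entirely symmetric, via the map $\gamma\mapsto\gamma^{-1}$ which reverses orientation and swaps the roles of the left and right endpoints of each interval), and then to derive the final formula for the intersection. The implication $(2)\Rightarrow(1)$ is immediate: if $\alpha_{i_0}\in(\alpha_i,\beta_i)$ for every $i\neq i_0$, then I claim a small interval $(\alpha_{i_0},\gamma)$ with $\gamma$ chosen sufficiently close to $\alpha_{i_0}$ (on the correct side) is contained in every $(\alpha_i,\beta_i)$; here I would use that each $(\alpha_i,\beta_i)$ is open and that $\alpha_{i_0}$ lies in it, so some one-sided neighborhood of $\alpha_{i_0}$ inside the interval survives. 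More carefully, since the $\alpha_i,\beta_i$ are all distinct, I can pick $\gamma$ in $(\alpha_{i_0},\beta_{i_0})$ close enough to $\alpha_{i_0}$ that no other endpoint separates $\alpha_{i_0}$ from $\gamma$, and then $(\alpha_{i_0},\gamma)\subseteq(\alpha_i,\beta_i)$ for all $i$.

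For $(1)\Rightarrow(2)$, suppose $\delta\in\bigcap_i(\alpha_i,\beta_i)$. Working in the linear order $\Cal O_\delta$ on $\S\setminus\{\delta\}$ (equivalently, cutting the circle at $\delta$ and laying it out as a line), each condition $\delta\in(\alpha_i,\beta_i)$ translates into $\alpha_i <_\delta \beta_i$ in a suitable sense — more precisely, going counter-clockwise from $\delta$ we meet $\alpha_i$ before $\beta_i$. Among the finitely many points $\alpha_1,\dots,\alpha_m$ (all distinct, all different from $\delta$), let $\alpha_{i_0}$ be the one that comes \emph{last} in the order $\Cal O_\delta$, i.e. the $\alpha_i$ closest to $\delta$ in the clockwise direction. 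I then want to check $\alpha_{i_0}\in(\alpha_i,\beta_i)$ for each $i\neq i_0$: since $\delta\in(\alpha_i,\beta_i)$ and $\alpha_{i_0}$ lies strictly between $\alpha_i$ and $\delta$ in the $\Cal O_\delta$-order, it follows from $\Cal O(\alpha_i,\alpha_{i_0},\delta)$ together with $\Cal O(\alpha_i,\delta,\beta_i)$ that $\Cal O(\alpha_i,\alpha_{i_0},\beta_i)$, using the transitivity-type property of the orientation relation (the fact recorded before the proposition that $\alpha<\beta$ and $\beta<\gamma$ imply $\Cal O(\alpha,\beta,\gamma)$, applied in the order based at $\alpha_i$). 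The one subtlety is ruling out $\alpha_{i_0}=\beta_i$, which is excluded by the distinctness hypothesis.

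Finally, for the "moreover" part, I take $i_0$ as in $(2)$ and $j_0$ as in $(3)$ and show $\bigcap_i(\alpha_i,\beta_i)=(\alpha_{i_0},\beta_{j_0})$ by double inclusion. For $\supseteq$: if $\gamma\in(\alpha_{i_0},\beta_{j_0})$ then for any $i$ I need $\gamma\in(\alpha_i,\beta_i)$; this follows by chaining $\Cal O(\alpha_i,\alpha_{i_0},\beta_i)$ (from $(2)$, or trivially if $i=i_0$), $\Cal O(\alpha_{i_0},\gamma,\beta_{j_0})$, and $\Cal O(\alpha_i,\beta_{j_0},\beta_i)$ (from $(3)$) to conclude $\Cal O(\alpha_i,\gamma,\beta_i)$. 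For $\subseteq$: any $\gamma$ in the intersection satisfies, in particular, $\gamma\in(\alpha_{i_0},\beta_{i_0})$ and $\gamma\in(\alpha_{j_0},\beta_{j_0})$, and a short orientation argument using the extremal choices of $i_0$ and $j_0$ pins $\gamma$ down to $(\alpha_{i_0},\beta_{j_0})$. The main obstacle I anticipate is purely bookkeeping: keeping the cyclic orientation arguments straight, since intuitions from linear orders can mislead, and one must be disciplined about always reducing to the linear order $\Cal O_\delta$ (or $\Cal O_{\alpha_i}$) based at a well-chosen point and invoking the stated compatibility of $\Cal O$ with that linearization. There is no deep idea here beyond the observation that finitely many arcs have nonempty common intersection iff they share a common "leftmost-compatible" endpoint.
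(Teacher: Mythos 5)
Your treatment of the equivalence of (1), (2) and (3) is correct and is essentially the paper's own argument: fix $\delta$ in the intersection, linearize via $\Cal O_\delta$, and take the $\Cal O_\delta$-maximal $\alpha_{i_0}$ (resp.\ $\Cal O_\delta$-minimal $\beta_{j_0}$); conversely, a short arc immediately counterclockwise of $\alpha_{i_0}$ lies in every $(\alpha_i,\beta_i)$. No complaints there.

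The gap is in the ``moreover'' clause, and it is not the bookkeeping issue you anticipate. First, your chaining for $\supseteq$ uses $\Cal O(\alpha_i,\alpha_{i_0},\beta_i)$, $\Cal O(\alpha_{i_0},\gamma,\beta_{j_0})$ and $\Cal O(\alpha_i,\beta_{j_0},\beta_i)$, but these do not determine whether $\alpha_{i_0}$ precedes $\beta_{j_0}$ in the linear order $\Cal O_{\alpha_i}$; if it does not, the arc $(\alpha_{i_0},\beta_{j_0})$ wraps past $\beta_i$ and the conclusion $\Cal O(\alpha_i,\gamma,\beta_i)$ fails. The inclusion $\supseteq$ holds only for the extremal witnesses (e.g.\ $\beta_{j_0}$ chosen $\Cal O_{\alpha_{i_0}}$-minimal among the $\beta_j$, as in the paper's proof), not for arbitrary $i_0,j_0$ satisfying (2) and (3). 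Second, and more seriously, the reverse inclusion $\subseteq$, which you defer to ``a short orientation argument,'' cannot be carried out, because the displayed identity is false in general. Take $m=2$ and $\alpha_1=e(0)$, $\beta_1=e(0.6)$, $\alpha_2=e(0.5)$, $\beta_2=e(0.1)$: the four points are distinct, every index witnesses both (2) and (3), yet
\[
(\alpha_1,\beta_1)\cap(\alpha_2,\beta_2)=\big(e(0),e(0.1)\big)\cup\big(e(0.5),e(0.6)\big)
\]
is a union of two disjoint nonempty arcs and equals $(\alpha_{i_0},\beta_{j_0})$ for no choice of $i_0,j_0$. What the extremal construction actually produces is the connected component of the intersection containing the chosen point $\delta$. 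To be fair, the paper's proof dismisses this clause with ``follows from the rest of the proof above,'' so the defect lies in the statement as much as in your write-up; but you should claim only the inclusion $(\alpha_{i_0},\beta_{j_0})\subseteq\bigcap_{i}(\alpha_i,\beta_i)$ for the extremal choices, which is all that the equivalence (and the later applications) require.
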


\begin{proof}
%
Suppose $\gamma\in(\alpha_i,\beta_i)$ for every $i\in[m]$. Then $\Cal O_\gamma(\beta_i,\alpha_i)$ for each $i\in[m]$. Choose $i_0$ such that $\alpha_{i_0}$ is maximal among $\alpha_i$ with respect to $\Cal O_\gamma$. Clearly, $\alpha_{i_0}\in(\alpha_i,\beta_i)$ for each $i\neq i_0$. Similarly, taking $\beta_{j_0}$ to be minimum among $\beta_i$ with respect to $\Cal O_\gamma$ we see that $\beta_{j_0}\in(\alpha_i,\beta_i)$ for each $i\neq j_0$. So the first condition implies the others.

\medskip\noindent
Conversely, assume that $\alpha_{i_0}\in(\alpha_i,\beta_i)$ for each $i\neq i_0$. Then 
 \[(\alpha_{i_0},\beta_{j_0})\subseteq\bigcap_{i=1}^m (\alpha_i,\beta_i),\]
where $\beta_{j_0}$ is minimum among $\beta_i$ with respect to $\Cal O_{\alpha_{i_0}}$. 

\medskip\noindent
If $\beta_{j_0}\in(\alpha_i,\beta_i)$ for each $i\neq i_0$. Then we take $\alpha_{i_0}$ to be maximum among $\alpha_i$ with respect to $\Cal O_{\beta{j_0}}$ in order to get  
 \[(\alpha_{i_0},\beta_{j_0})\subseteq\bigcap_{i=1}^m (\alpha_i,\beta_i).\]

\medskip\noindent
The last sentence of the proposition follows from the rest of the proof above.
\end{proof}

\medskip\noindent
For $k\in\N_+$, let $\zeta_k$ denote the primitive $k^{\text{th}}$ root of unity $e(\frac{1}{k})$.

\begin{proposition}\label{kthpower}
Let $\alpha,\beta,\gamma\in\S$ and $k\in\N_+$ such that $l\big((\alpha,\gamma)\big)<\zeta_k$. Then $\beta^k\in(\alpha^k,\gamma^k)$ if and only if $\beta\in(\zeta_k^s\alpha,\zeta_k^s\gamma)$ for some $s\in\Z$.
\end{proposition}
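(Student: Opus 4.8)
The plan is to reduce everything to the covering map $e:\R\to\S$ and work with representatives in $\R$, where the statement becomes an assertion about real intervals of length less than $1/k$ and the multiplication-by-$k$ map. First I would pick $a,b,c\in\R$ with $e(a)=\alpha$, $e(b)=\beta$, $e(c)=\gamma$, chosen so that $a\le b<a+1$ and $a<c<a+1$; the hypothesis $l((\alpha,\gamma))<\zeta_k$ translates to $0<c-a<1/k$ (the case $\alpha=\gamma$ being excluded since then the interval is empty and both sides fail). The condition $\beta\in(\alpha,\gamma)$ is then equivalent to $a<b<c$. On the other side, $\beta^k\in(\alpha^k,\gamma^k)$ means $e(kb)\in(e(ka),e(kc))$, i.e. there is an integer $s$ with $ka<kb+s<kc$ after translating $kb$ into the correct fundamental domain; equivalently $kb\equiv kb+s \pmod 1$ lands in the arc from $ka$ to $kc$. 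Unwinding, $\beta^k\in(\alpha^k,\gamma^k)$ holds iff there is $s\in\Z$ with $a+\tfrac{s}{k}<b<c+\tfrac{s}{k}$, and since $e(a+\tfrac sk)=\zeta_k^s\alpha$ and $e(c+\tfrac sk)=\zeta_k^s\gamma$, that is exactly $\beta\in(\zeta_k^s\alpha,\zeta_k^s\gamma)$.

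The heart of the matter is the direction $\beta^k\in(\alpha^k,\gamma^k)\Rightarrow \beta\in(\zeta_k^s\alpha,\zeta_k^s\gamma)$ for a suitable $s$. Here I would argue as follows: the $k$ translates $(\zeta_k^s\alpha,\zeta_k^s\gamma)$ for $s=0,1,\dots,k-1$ are pairwise disjoint (their total length is $k\cdot l((\alpha,\gamma))<1$), and applying the $k$-th power map sends each of them bijectively and orientation-preservingly onto the single interval $(\alpha^k,\gamma^k)$, because raising to the $k$-th power kills multiplication by $\zeta_k$ and, restricted to an arc of length $<1/k$, is a homeomorphism onto its image preserving $\Cal O$. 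Hence the preimage of $(\alpha^k,\gamma^k)$ under $\beta\mapsto\beta^k$ is precisely $\bigsqcup_{s=0}^{k-1}(\zeta_k^s\alpha,\zeta_k^s\gamma)$, which gives both implications at once. The reverse direction is then immediate since $(\zeta_k^s\alpha)^k=\alpha^k$ and $(\zeta_k^s\gamma)^k=\gamma^k$, so $\beta\in(\zeta_k^s\alpha,\zeta_k^s\gamma)$ forces $\beta^k\in(\alpha^k,\gamma^k)$ by the orientation–multiplication compatibility already recorded.

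I expect the main obstacle to be bookkeeping rather than conceptual: one has to be careful that the chosen real lifts keep $b$ in the same fundamental domain used to express all three of $\beta\in(\alpha,\gamma)$, the disjointness of the translates, and the image interval simultaneously, and to handle the degenerate configurations ($\beta=\alpha$, $\beta=\gamma$, or $\beta$ equal to some $\zeta_k^s\alpha$) where the strict orientation relation must be checked directly. A clean way to avoid most of this is to phrase the whole argument through the identification of an arc of length $<1/k$ with a genuine real interval and invoke that $x\mapsto kx$ is a strictly increasing bijection there; then the only real content is the length bound $k\cdot l((\alpha,\gamma))<1$, which is exactly the hypothesis $l((\alpha,\gamma))<\zeta_k$ unwound via $\zeta_k=e(1/k)$. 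No appeal to Proposition~\ref{intersection_of_intervals} is needed, though it could be used to streamline the disjointness step if desired.
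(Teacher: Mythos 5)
Your proposal is correct and follows essentially the same route as the paper: both lift to $\R$ via the covering map $e$, translate $\beta^k\in(\alpha^k,\gamma^k)$ into the real inequality $a+\tfrac{s}{k}<b<c+\tfrac{s}{k}$ for some $s\in\Z$, and use the length bound $c-a<\tfrac{1}{k}$ to push the statement back down to $\S$. The extra packaging via the $k$ disjoint translates and the $k$-fold covering $z\mapsto z^k$ is a pleasant reformulation but not needed, since your first paragraph already contains the paper's complete computation.
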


\begin{proof}
Note that if $\alpha=\gamma$, then the result is clear; so we assume $\alpha\neq\gamma$.  We may also assume that $\alpha<\gamma$; if that is not the case, then multiply $\alpha$ and $\gamma$ by $\zeta_k$.  So let $0\leq a< c<1$ be such that $\alpha=e(a)$ and $\gamma=e(c)$. Note that $c-a<\frac{1}{k}$ by assumption. Also let $\beta=e(b)$ for some $0\leq b<1$. 

\medskip\noindent
Suppose $\beta^k\in(\alpha^k,\gamma^k)$ and take $b'$ such that $\beta^k=e(b')$ and $ka<b'<kc$. Then $s:=kb-b'\in\Z$ and we get 
 \[
 ka+s<kb<kc+s.
 \]
Dividing by $k$, we have
 \[
 a+\frac{s}{k}<b<c+\frac{s}{k}.
 \]
 Applying $e$, we get that $\beta\in(\zeta_k^s\alpha,\zeta_k^s\gamma)$.
 
\medskip\noindent
Conversely, let $\beta\in(\zeta_k^s\alpha,\zeta_k^s\gamma)$ for some $s\in\Z$. In other words, $\beta\zeta_k^{-s}\in(\alpha,\gamma)$. Take $b''$ such that $\beta\zeta_k^{-s}=e(b'')$ and $a<b''<c$. Then $t:=b-b''-\frac{s}{k}\in\Z$ and
 \[
 a+t<b-\frac{s}{k}<c+t.
 \]
Now multiplying by $k$ and applying $e$ we get $\beta^k\in(\alpha^k,\gamma^k)$.
\end{proof}

\medskip\noindent
Note that the $s$ in this proposition can be chosen among $0,1,\dots,k-1$.

\medskip\noindent
Given $\alpha=e(x)$ with $0\leq x<1$ and $k\in\N_+$, we let $\alpha^{1/k}$ denote $e(\frac{x}{k})$; so $\alpha^{1/k}$ is the $k^{\text{th}}$ root of $\alpha$ with the smallest argument. Note that $1^{1/k}=1$ and that for $x\in\R$ we have $e(x)^{1/k}=e(\frac{x}{k})\zeta_k^{-\floor{x}}$. In particular, if $\alpha=e(a)$, then $(\alpha^k)^{1/k}=\alpha\zeta_k^{-\floor{ak}}$. However, we always have $(\alpha^{1/k})^k=\alpha$. The following observation will be useful.

\begin{lemma}\label{kthroot_roots_of_unity}
Let $\alpha,\beta\in\S$, $k,l\in\Z$. Suppose that $\alpha^k=\beta$. Then $\alpha=\beta^{1/k}\zeta_k^{l}$ if and only if $\alpha\in(\zeta_k^l,\zeta_k^{l+1})$.
\end{lemma}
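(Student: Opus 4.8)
The plan is to reduce everything to the real line via the covering map $e$ and then compare arguments modulo $1$, using throughout that the relevant arcs have length strictly less than $1$ so that orientation is detected by honest inequalities between real representatives.

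First I would note that since $(\beta^{1/k})^k=\beta=\alpha^k$, the element $\alpha(\beta^{1/k})^{-1}$ is a $k^{\text{th}}$ root of unity, so $\alpha=\beta^{1/k}\zeta_k^{j}$ for a unique $j\in\{0,1,\dots,k-1\}$. Since $\zeta_k^l$ depends only on $l\bmod k$, both sides of the claimed equivalence depend only on $l\bmod k$, so it suffices to prove: $\alpha=\beta^{1/k}\zeta_k^{j}$ with $0\le j<k$ lies in $(\zeta_k^l,\zeta_k^{l+1})$ if and only if $l\equiv j\pmod k$.

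Next I would write $\beta=e(b)$ with $0\le b<1$, so by definition $\beta^{1/k}=e(b/k)$ and hence $\alpha=\beta^{1/k}\zeta_k^{j}=e\big(\tfrac{b}{k}+\tfrac{j}{k}\big)$ with $0\le \tfrac{b}{k}+\tfrac{j}{k}<1$. The elementary fact I would isolate is that for each $m$, the interval of consecutive $k^{\text{th}}$ roots of unity is $(\zeta_k^m,\zeta_k^{m+1})=\{e(t):\tfrac{m}{k}<t<\tfrac{m+1}{k}\}$; this follows directly from the definition of $\Cal O$ since $\tfrac{m+1}{k}-\tfrac{m}{k}=\tfrac1k<1$, so no wraparound occurs. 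Assuming $\beta\neq 1$ we have $0<\tfrac{b}{k}<\tfrac1k$, hence $\tfrac{j}{k}<\tfrac{b}{k}+\tfrac{j}{k}<\tfrac{j+1}{k}$, which says exactly $\alpha\in(\zeta_k^j,\zeta_k^{j+1})$; and the $k$ intervals $(\zeta_k^0,\zeta_k^1),\dots,(\zeta_k^{k-1},\zeta_k^{k})$ being pairwise disjoint, $\alpha\notin(\zeta_k^l,\zeta_k^{l+1})$ whenever $l\not\equiv j\pmod k$. This establishes both directions.

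The only point requiring care is the degenerate case $\beta=1$ (equivalently $b=0$), where $\beta^{1/k}\zeta_k^l=\zeta_k^l$ lies on the boundary of the arc rather than strictly inside; here the statement is to be read with the half-open interval $[\zeta_k^l,\zeta_k^{l+1})$, consistent with the conventions fixed above, and the same computation applies verbatim. I do not expect a genuine obstacle: the content is bookkeeping with arguments mod $1$, and the one step to state precisely is the translation of $\Cal O(e(m/k),e(t),e((m+1)/k))$ into the inequality $m/k<t<(m+1)/k$, which is licensed by all the arcs in play having length below $1$.
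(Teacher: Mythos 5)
Your argument is correct and is precisely the unpacking of the paper's one-word proof (``Clear''): pull everything back to real representatives via $e$, observe that $\alpha=\beta^{1/k}\zeta_k^j$ for a unique $j\in\{0,\dots,k-1\}$, and translate $\Cal O\big(e(m/k),e(t),e((m+1)/k)\big)$ into $m/k<t<(m+1)/k$, which is legitimate since the arcs have length $\zeta_k\neq 1$. Your remark on the degenerate case is a genuine (if minor) catch: when $\beta=1$, i.e.\ $\alpha$ is a $k^{\text{th}}$ root of unity, the equality $\alpha=\beta^{1/k}\zeta_k^l$ holds while $\alpha\notin(\zeta_k^l,\zeta_k^{l+1})$ because the interval is open, so the statement needs the half-open reading $[\zeta_k^l,\zeta_k^{l+1})$ (or the standing assumption $\alpha\notin\mu_k$) exactly as you propose.
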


\begin{proof}
Clear.
\end{proof}

\medskip\noindent
Using this new notation, the previous proposition has the following consequences.

\begin{corollary}\label{kthroot1}
Let $\alpha,\beta,\gamma\in\S$ and $k\in\N_+$.  Suppose that $\gamma\not<\alpha$. Then $\beta^k\in(\alpha,\gamma)$ if and only if there is $s\in\{0,1,\dots,k-1\}$ such that $\beta\in\big(\zeta_k^s\alpha^{1/k},\zeta_k^s\gamma^{1/k}\big)$.
\end{corollary}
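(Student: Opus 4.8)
The plan is to derive this from Proposition~\ref{kthpower} by subdividing the interval $(\alpha,\gamma)$ into pieces short enough that the proposition applies, and then translating the conclusion into the $\alpha^{1/k}$ notation via Lemma~\ref{kthroot_roots_of_unity}. First I would dispose of the trivial case $\alpha=\gamma$ (the interval is empty and, since $k$-th roots are distinct, the right-hand disjunction is also unsatisfiable), so assume $\alpha\neq\gamma$. The hypothesis $\gamma\not<\alpha$ means $\alpha<\gamma$ in the ordering of $\S\setminus\{1\}$ (or one of them is $1$), so we may write $\alpha=e(a)$, $\gamma=e(c)$ with $0\le a<c<1$; the length of $(\alpha,\gamma)$ is $e(c-a)$ with $0<c-a<1$, but there is no a priori bound making it shorter than $\zeta_k$, which is exactly why Proposition~\ref{kthpower} cannot be applied directly.

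Next I would introduce the intermediate points $\alpha_j:=e\!\left(a+\tfrac{j}{k}(c-a)\right)$ for $j=0,1,\dots,k$, so that $\alpha_0=\alpha$, $\alpha_k=\gamma$, and each subinterval $(\alpha_{j-1},\alpha_j)$ has length $e\!\left(\tfrac{c-a}{k}\right)<\zeta_k$. Since $(\alpha,\gamma)=\bigcup_{j=1}^{k}(\alpha_{j-1},\alpha_j]$, we have $\beta^k\in(\alpha,\gamma)$ if and only if $\beta^k\in(\alpha_{j-1},\alpha_j]$ for some $j$; I would absorb the endpoint issue by the usual argument (either phrase everything with half-open intervals throughout, or note that $\beta^k=\alpha_j$ forces $\beta=\zeta_k^s\alpha_j^{1/k}$ for some $s$, which is the boundary case of the claimed equivalence). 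Applying Proposition~\ref{kthpower} to each short interval $(\alpha_{j-1},\alpha_j)$ gives: $\beta^k\in(\alpha_{j-1},\alpha_j)$ iff $\beta\in(\zeta_k^s\alpha_{j-1},\zeta_k^s\alpha_j)$ for some $s\in\{0,\dots,k-1\}$. Reassembling over $j$, and using that $\bigcup_{j=1}^{k}(\zeta_k^s\alpha_{j-1},\zeta_k^s\alpha_j]=(\zeta_k^s\alpha,\zeta_k^s\gamma)$ for each fixed $s$ (the translates of the partition recombine inside each rotated copy), yields $\beta^k\in(\alpha,\gamma)$ iff $\beta\in(\zeta_k^s\alpha,\zeta_k^s\gamma)$ for some $s\in\{0,\dots,k-1\}$.

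Finally I would rewrite $\zeta_k^s\alpha$ and $\zeta_k^s\gamma$ in terms of $k$-th roots. Since $\alpha=e(a)$ with $0\le a<1$, we have $\alpha^{1/k}=e(a/k)$ and, more generally, $(\zeta_k^s\alpha)^{1/k}=e\!\left(\tfrac{a}{k}+\tfrac{s}{k}-\tfrac{1}{k}\lfloor s\rfloor\right)$-type bookkeeping shows that the family $\{\zeta_k^s\alpha : s=0,\dots,k-1\}$ has the same set of $k$-th roots with smallest argument as $\{\zeta_k^s\alpha^{1/k}: s=0,\dots,k-1\}$, namely all the $k$-th roots of $\alpha$. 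Thus $(\zeta_k^s\alpha,\zeta_k^s\gamma)=(\zeta_k^{s'}\alpha^{1/k}\cdot\zeta_k^{?},\dots)$ reindexes precisely to $\bigl(\zeta_k^s\alpha^{1/k},\zeta_k^s\gamma^{1/k}\bigr)$ as $s$ ranges over $\{0,\dots,k-1\}$, so the two disjunctions coincide; Lemma~\ref{kthroot_roots_of_unity} is what guarantees the roots $\alpha^{1/k},\zeta_k\alpha^{1/k},\dots$ are listed in cyclic order without overlap. The main obstacle I anticipate is purely clerical rather than conceptual: keeping the endpoint conventions and the reindexing of the $\zeta_k^s$-translates consistent, so that the half-open partition of $(\alpha,\gamma)$ and its rotated copies glue back together without double-counting or gaps; once the notation is set up carefully this is routine.
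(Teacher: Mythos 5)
There is a genuine gap, and it occurs at the step where you apply Proposition~\ref{kthpower} to the subintervals. That proposition, applied to an interval $(\delta,\epsilon)$ of length less than $\zeta_k$, characterizes when $\beta^k$ lies in $(\delta^k,\epsilon^k)$ — the interval between the $k$-th \emph{powers} of the endpoints — not when $\beta^k$ lies in $(\delta,\epsilon)$ itself. Your per-piece claim ``$\beta^k\in(\alpha_{j-1},\alpha_j)$ iff $\beta\in(\zeta_k^s\alpha_{j-1},\zeta_k^s\alpha_j)$ for some $s$'' drops the powers on the left and is false; consequently the reassembled statement ``$\beta^k\in(\alpha,\gamma)$ iff $\beta\in(\zeta_k^s\alpha,\zeta_k^s\gamma)$ for some $s$'' is also false. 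Concretely, with $k=2$, $\alpha=1$, $\gamma=e(1/4)=i$ and $\beta=e(3/16)$, we have $\beta\in(1,i)$ (so the right-hand side holds with $s=0$) but $\beta^2=e(3/8)\notin(1,i)$. The final ``reindexing'' paragraph cannot repair this: $\{\zeta_k^s\alpha^{1/k}:0\le s<k\}$ is the set of $k$-th roots of $\alpha$, whereas the points $\zeta_k^s\alpha$ for $s\neq 0$ are not $k$-th roots of $\alpha$ at all, and the two unions $\bigcup_s(\zeta_k^s\alpha,\zeta_k^s\gamma)$ and $\bigcup_s\bigl(\zeta_k^s\alpha^{1/k},\zeta_k^s\gamma^{1/k}\bigr)$ even have different total arc length ($k$ times that of $(\alpha,\gamma)$ versus that of $(\alpha,\gamma)$), so they cannot coincide.

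The subdivision is also unnecessary: the whole point of passing to $\alpha^{1/k}$ and $\gamma^{1/k}$ is that it shrinks the interval by a factor of $k$. Since $\gamma\not<\alpha$, write $\alpha=e(a)$, $\gamma=e(c)$ with $0\le a\le c<1$; then $\alpha^{1/k}=e(a/k)$, $\gamma^{1/k}=e(c/k)$, so $l\bigl((\alpha^{1/k},\gamma^{1/k})\bigr)=e\bigl(\tfrac{c-a}{k}\bigr)<\zeta_k$ automatically, while $(\alpha^{1/k})^k=\alpha$ and $(\gamma^{1/k})^k=\gamma$. Applying Proposition~\ref{kthpower} once, with $\alpha^{1/k}$ and $\gamma^{1/k}$ in the roles of $\alpha$ and $\gamma$, gives exactly the claimed equivalence, with $s$ restricted to $\{0,1,\dots,k-1\}$ by the remark following that proposition. (The hypothesis $\gamma\not<\alpha$ is what makes $a\le c$ available; when $\gamma<\alpha$ the length of $(\alpha^{1/k},\gamma^{1/k})$ exceeds $e\bigl(\tfrac{k-1}{k}\bigr)$, which is why Corollary~\ref{kthroot2} carries the extra factor $\zeta_k$ on the right endpoint.)
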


%
%
%
%
 
 \begin{corollary}\label{kthroot2}
Let $\alpha,\beta,\gamma\in\S$ and $k\in\N_+$.  Suppose that $\gamma< \alpha$. Then $\beta^k\in(\alpha,\gamma)$ if and only if there is $s\in\{0,1,\dots,k-1\}$ such that $\beta\in\big(\zeta_k^s\alpha^{1/k},\zeta_k^{s+1}\gamma^{1/k}\big)$.
\end{corollary}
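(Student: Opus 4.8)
The plan is to reduce Corollary~\ref{kthroot2} to Proposition~\ref{kthpower}, exactly as Corollary~\ref{kthroot1} does, but handling the extra ``wrap-around'' caused by the hypothesis $\gamma<\alpha$. First I would dispose of the trivial case $\alpha=\gamma$, where $(\alpha,\gamma)$ is empty and $\beta^k\in(\alpha,\gamma)$ never holds; on the right-hand side, $\zeta_k^s\alpha^{1/k}=\zeta_k^{s+1}\gamma^{1/k}$ for the relevant $s$, so that interval is empty too, and both sides are false. So assume $\alpha\neq\gamma$ and $\gamma<\alpha$.

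Next I would split the interval $(\alpha,\gamma)$ at $1$. Since $\gamma<\alpha$ (and neither is $1$, or if one is $1$ one adjusts by the convention $1<\delta$ for $\delta\neq1$), we have the disjoint decomposition $(\alpha,\gamma)=(\alpha,1]\cup(1,\gamma)=(\alpha,1)\cup\{1\}\cup(1,\gamma)$. The point is that $(\alpha,1)$ and $(1,\gamma)$ are each intervals of the form $(\delta,\varepsilon)$ with $\delta\not<\varepsilon$ failing to wrap, i.e. with representatives $a<1$ and $1=1$, respectively, so Corollary~\ref{kthroot1} applies to each piece provided its length is small enough. Concretely: $\beta^k\in(\alpha,1)$ iff $\beta\in\bigl(\zeta_k^s\alpha^{1/k},\zeta_k^s\cdot 1^{1/k}\bigr)=\bigl(\zeta_k^s\alpha^{1/k},\zeta_k^s\bigr)$ for some $s\in\{0,\dots,k-1\}$; and $\beta^k\in(1,\gamma)$ iff $\beta\in\bigl(\zeta_k^t,\zeta_k^t\gamma^{1/k}\bigr)$ for some $t\in\{0,\dots,k-1\}$; and $\beta^k=1$ iff $\beta=\zeta_k^u$ for some $u$. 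Recombining, $\beta^k\in(\alpha,\gamma)$ iff $\beta$ lies in one of the intervals $\bigl(\zeta_k^s\alpha^{1/k},\zeta_k^s\bigr)$, or equals some $\zeta_k^u$, or lies in some $\bigl(\zeta_k^t,\zeta_k^t\gamma^{1/k}\bigr)$. The bookkeeping step is to observe that the half-open interval $\bigl(\zeta_k^s\alpha^{1/k},\zeta_k^s\bigr]$ followed by $\bigl[\zeta_k^s,\zeta_k^s\gamma^{1/k}\bigr)$ glue into $\bigl(\zeta_k^s\alpha^{1/k},\zeta_k^s\gamma^{1/k}\bigr)$ — here I use that $\alpha^{1/k}$ and $\gamma^{1/k}$ have arguments $a/k$ and $c/k$ in $[0,1/k)$, so $\zeta_k^s\gamma^{1/k}\in\bigl(\zeta_k^s,\zeta_k^{s+1}\bigr)$ and $\zeta_k^s\alpha^{1/k}\in\bigl(\zeta_k^s,\zeta_k^{s+1}\bigr)$, and so the three pieces sit consecutively inside one ``sector'' $(\zeta_k^s,\zeta_k^{s+1})$ — wait, that is not quite the indexing of the statement, so the real content is matching indices.

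The subtlety, and what I expect to be the main obstacle, is the index alignment: the statement of Corollary~\ref{kthroot2} asserts $\beta\in\bigl(\zeta_k^s\alpha^{1/k},\zeta_k^{s+1}\gamma^{1/k}\bigr)$, with $\gamma^{1/k}$ multiplied by $\zeta_k^{s+1}$ rather than $\zeta_k^s$. This is precisely because, once we cross $1$ going counter-clockwise from $\alpha$ to $\gamma$, the relevant $k$-th roots get shifted by one power of $\zeta_k$: the sector containing the roots of points in $(\alpha,1)$ is $(\zeta_k^s\alpha^{1/k},\zeta_k^{s+1})$, and continuing into $(1,\gamma)$ the roots land in $(\zeta_k^{s+1},\zeta_k^{s+1}\gamma^{1/k})$, whose union is exactly $(\zeta_k^s\alpha^{1/k},\zeta_k^{s+1}\gamma^{1/k})$. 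So I would make this precise by writing $\alpha=e(a)$, $\gamma=e(c)$ with $0\le c<a<1$, and noting $\beta^k\in(\alpha,\gamma)$ iff there is $s'\in\mathbb Z$ with $\tfrac{a}{k}+\tfrac{s'}{k}<b<\tfrac{c+1}{k}+\tfrac{s'}{k}$, where $b$ is the argument of $\beta$; applying $e$ and writing $s'=s$ gives $\beta\in\bigl(\zeta_k^s\alpha^{1/k},\ \zeta_k^s\cdot e(\tfrac{c+1}{k})\bigr)=\bigl(\zeta_k^s\alpha^{1/k},\ \zeta_k^{s+1}\gamma^{1/k}\bigr)$, using $e(\tfrac{c+1}{k})=e(\tfrac1k)e(\tfrac ck)=\zeta_k\gamma^{1/k}$. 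One checks the length hypothesis is automatically met since $l((\alpha,\gamma))=e(c-a+1)$ and $c-a+1<1+ \tfrac1k$ need not hold in general — so in fact one may need the additional harmless remark that $(\alpha,\gamma)$ can be assumed to have length $<\zeta_k$, or the corollary is simply being stated for arbitrary $\alpha,\gamma$ and the "$s\in\{0,\dots,k-1\}$" quantifier absorbs everything; I would state and use it in the bounded-length form consistent with Corollary~\ref{kthroot1} and Proposition~\ref{kthpower}, then reduce the general case to small intervals by subdividing $(\alpha,\gamma)$ into at most $k+1$ pieces each of length $<\zeta_k$ and taking the union of the resulting root-intervals.
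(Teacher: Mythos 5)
Your penultimate paragraph already contains a complete and correct proof, and it is exactly the route the paper intends (the paper gives no separate argument for this corollary; it is meant to fall out of the computation in Proposition~\ref{kthpower} together with the definition of $\alpha^{1/k}$): write $\alpha=e(a)$, $\gamma=e(c)$ with $0\le c<a<1$, so $\beta^k=e(kb)\in(\alpha,\gamma)$ iff $a+s'<kb<c+1+s'$ for some $s'\in\Z$, divide by $k$, apply $e$, and use $e(\tfrac{c+1}{k})=\zeta_k\gamma^{1/k}$; reducing $s'$ modulo $k$ gives $s\in\{0,\dots,k-1\}$. Your closing worry about a length hypothesis is a red herring, though: no such hypothesis appears in the corollary and none is needed, because the correspondence between the real inequality $\tfrac{a+s'}{k}<b<\tfrac{c+1+s'}{k}$ and the orientation interval $\bigl(\zeta_k^{s'}\alpha^{1/k},\zeta_k^{s'+1}\gamma^{1/k}\bigr)$ only requires that this root-side interval have length $e\bigl(\tfrac{c+1-a}{k}\bigr)$ with $0<\tfrac{c+1-a}{k}<\tfrac1k<1$, which is automatic; so the proposed subdivision of $(\alpha,\gamma)$ into small pieces is unnecessary. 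Two further small points: the case $\alpha=\gamma$ you treat first cannot occur, since $\gamma<\alpha$ already forces $\gamma\neq\alpha$; and in the alternative split-at-$1$ argument you sketch, the case $\gamma=1$ (where $1\notin(\alpha,\gamma)$) would need separate mention, which is another reason to prefer the direct computation you end with.
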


%
%


\medskip\noindent
{\bf Regularly Dense Groups}. Let $\Gamma\leq\S$. Then $\Gamma$ is either finite or dense in $\S$. When it is finite, it consists of $m^{\text{th}}$ roots of unity for some $m>0$. When $\Gamma$ is dense, it is indeed \emph{regularly dense} in the following sense.

\begin{proposition}\label{regularly_dense_prop}
Let $\Gamma\leq\S$ be infinite. Then for any distinct $\alpha,\beta\in\Gamma$ and prime $p$, there is $\gamma\in\Gamma$ such that $\gamma^p\in(\alpha,\beta)$. 
\end{proposition}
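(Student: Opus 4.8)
The plan is to reduce the statement to a density fact about $\Gamma$ together with the description of $p$-th powers given by the corollaries above. First I would observe that since $\Gamma \leq \S$ is infinite, it is dense in $\S$ (this dichotomy is already stated in the paragraph preceding the proposition). Fix distinct $\alpha,\beta\in\Gamma$ and a prime $p$; the interval $(\alpha,\beta)$ is a nonempty open subset of $\S$, so by density it contains some element of $\Gamma$, but that is not quite enough — we need an element of $\Gamma$ that is a $p$-th power of an element of $\Gamma$, i.e. a point of $\Gamma^p = \{\gamma^p : \gamma\in\Gamma\}$. So the real task is to show $\Gamma^p$ is dense in $\S$, or at least meets $(\alpha,\beta)$.

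The key step: $\Gamma^p$ is itself an infinite subgroup of $\S$, hence also dense in $\S$ by the same dichotomy. Indeed $\Gamma^p$ is the image of $\Gamma$ under the $p$-th power map, which is a group homomorphism $\S\to\S$; its kernel on $\Gamma$ is $\Gamma\cap\mu_p$ (the $p$-th roots of unity in $\Gamma$), which has size at most $p$, so $\Gamma^p$ has index at most $p$ in $\Gamma$ and is therefore still infinite. An infinite subgroup of $\S$ is dense, so $\Gamma^p\cap(\alpha,\beta)\neq\emptyset$; picking $\gamma\in\Gamma$ with $\gamma^p\in(\alpha,\beta)$ finishes the proof. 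Alternatively, and perhaps more in the spirit of the preceding development, one can argue directly: by density choose $\delta\in\Gamma\cap(\alpha^{1/p},\gamma')$ for a suitable $p$-th root arrangement and invoke Corollary~\ref{kthroot1} or Corollary~\ref{kthroot2} (according to whether $\beta\not<\alpha$ or $\beta<\alpha$) to conclude $\delta^p\in(\alpha,\beta)$; one just has to make sure the relevant rotated sub-arc $(\zeta_p^s\alpha^{1/p},\zeta_p^s\beta^{1/p})$ is nonempty, which it is since $\alpha\neq\beta$ forces $\alpha^{1/p}\neq\beta^{1/p}$.

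The main obstacle — really the only place any care is needed — is the passage from ``$\Gamma$ is dense'' to ``$\Gamma^p$ meets the given interval''. The clean way around it is the index computation showing $\Gamma^p$ is infinite, together with the standard fact (used implicitly throughout this section) that an infinite subgroup of the circle is dense. I would state that fact explicitly if it has not already been invoked, since its proof is the usual pigeonhole argument: an infinite subset of the compact space $\S$ has an accumulation point, so contains pairs of group elements arbitrarily close together, whose quotients are small elements of the group, whose powers then fill up $\S$ to within any prescribed accuracy. With that in hand the proposition is immediate.
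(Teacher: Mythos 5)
Your argument is correct, and it is genuinely different from what the paper does: the paper gives no proof at all here, deferring instead to Definition 8.1.7 of \cite{thesis_ayhan} and the remark following it, so your write-up is a self-contained replacement for an external citation. The key step --- that $\Gamma^p$ is itself an infinite subgroup of $\S$, hence dense by the dichotomy stated just before the proposition, hence meets the nonempty open arc $(\alpha,\beta)$ --- is exactly right, and the infinitude of $\Gamma^p$ follows correctly from the fact that the $p$-th power map has kernel $\Gamma\cap\mu_p$ of size at most $p$, so that $\Gamma^p\cong\Gamma/(\Gamma\cap\mu_p)$. One small slip: your intermediate claim that $\Gamma^p$ therefore has \emph{index} at most $p$ in $\Gamma$ is false for general infinite $\Gamma\leq\S$ (e.g.\ for $\Gamma$ free abelian of infinite rank, which does embed in $\S$, the quotient $\Gamma/\Gamma^p$ is infinite); but you do not need the index bound, only that the \emph{image} is infinite, which the kernel computation already gives, so the proof stands after deleting that clause. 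Your alternative route via Corollaries~\ref{kthroot1} and~\ref{kthroot2} also works --- the preimage of $(\alpha,\beta)$ under the $p$-th power map is a union of nonempty open arcs, one of which must meet the dense set $\Gamma$ --- and has the advantage of staying closer to the machinery the section has already built, at the cost of a case split on whether $\beta<\alpha$. Either version would be a reasonable thing to include in place of the citation.
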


\medskip\noindent
For the proof of this, we refer the reader to Definition 8.1.7 in \cite{thesis_ayhan} and the remark succeeding it. Note that the conclusion of the proposition above is slightly different than the original definition of regularly dense, but it is easy see that they are indeed equivalent. It follows that for any $n>0$ and distinct $\alpha,\beta\in\Gamma$, there is $\gamma\in\Gamma$ such that $\gamma^n\in(\alpha,\beta)$.

\section{Beatty Sequences}\label{beatty_section}
Let $r>1$ be an irrational number. The \emph{Beatty Sequence generated by $r$} is $\Cal B_r=\left(\floor{mr}\right)_{m>0}$; we put $b_m=\floor{mr}$.  This is an increasing sequence and we let $P_r^+$ denote the set whose elements are the terms of $\Cal B_r$. 

\medskip\noindent
A related sequence is $\Cal S_r=\left(\floor{\frac{n+1}{r}}-\floor{\frac{n}{r}}\right)_{n>0}$; we put $s_n=\floor{\frac{n+1}{r}}-\floor{\frac{n}{r}}$. Note that $s_n\in\{0,1\}$ for each $n>0$. Actually, it is better to think of $\Cal S_r$ as an infinite word in the alphabet $\{0,1\}$. As such, it is called \emph{the Characteristic Sturmian Word of Slope $\frac{1}{r}$}. It has the property that for every $m$, it has exactly $m+1$ many different subwords of length $m$. 

\medskip\noindent
Both Beatty Sequences and Sturmian Words have rich theories that we do not get into here, and we refer the interested reader to \cite{auto}. We only need the following connection between $\Cal B_r$ and $\Cal S_r$ which is Lemma 9.1.3 of \cite{auto}, but we include a proof for completeness.

\begin{lemma}\label{sturm-vs-beatty1}
Let $n\in\N_+$. Then  $n\in P_r^+$ if and only if $s_n=1$.
\end{lemma}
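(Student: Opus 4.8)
The plan is to unwind both sides in terms of the fractional-part function and reduce the equivalence to a statement about when the interval $(n/r,(n+1)/r)$ contains an integer. First I would write $s_n = \floor{\tfrac{n+1}{r}} - \floor{\tfrac{n}{r}}$; since $\tfrac{1}{r}<1$, the two consecutive multiples $\tfrac{n}{r}$ and $\tfrac{n+1}{r}$ differ by less than $1$, so $s_n\in\{0,1\}$ and in fact $s_n=1$ if and only if there is an integer $m$ with $\tfrac{n}{r}<m\le\tfrac{n+1}{r}$. Because $r$ is irrational, $\tfrac{n+1}{r}$ is never an integer, so the condition becomes: there exists $m\in\Z$ with $\tfrac{n}{r}<m<\tfrac{n+1}{r}$, equivalently $n<mr<n+1$, equivalently $\floor{mr}=n$ (again using irrationality of $r$ so that $mr\neq n+1$ and $mr \neq n$). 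Note $m>0$ here since $n>0$ and $r>1$. That last condition is precisely $n\in P_r^+$, so this chain of equivalences gives the lemma.

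The key steps in order are: (1) observe $s_n\in\{0,1\}$ and identify $s_n=1$ with "$(\tfrac{n}{r},\tfrac{n+1}{r}]\cap\Z\neq\emptyset$"; (2) use irrationality of $r$ to drop the closed endpoint, getting "$(\tfrac{n}{r},\tfrac{n+1}{r})\cap\Z\neq\emptyset$"; (3) rescale by $r$ to rewrite this as "$\exists m>0:\ n<mr<n+1$"; (4) use irrationality of $r$ once more to recognize $n<mr<n+1$ as equivalent to $\floor{mr}=n$, i.e. $n\in P_r^+$.

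I expect no serious obstacle here: the only thing to be careful about is the handling of endpoints, which is exactly where the irrationality hypothesis on $r$ is used (twice), and the bookkeeping that the witness $m$ is positive so that it legitimately contributes to $P_r^+$ (respectively to the Beatty sequence $\Cal B_r$). Everything else is a routine manipulation of floor functions. An alternative but essentially equivalent route is to note $\floor{\tfrac{n+1}{r}}$ counts the number of positive Beatty terms $b_m=\floor{mr}$ that are $\le n$ (a standard Beatty-sequence identity), so $s_n$ counts how many such terms equal exactly $n$, which is $1$ precisely when $n\in P_r^+$; but the direct fractional-part argument above is shorter and self-contained.
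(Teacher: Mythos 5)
Your proof is correct and follows essentially the same route as the paper's: both reduce the statement to the fact that $s_n=1$ exactly when the interval $(\tfrac{n}{r},\tfrac{n+1}{r})$ contains an integer $m$, which after rescaling by $r$ and using irrationality is precisely the condition $\floor{mr}=n$ with $m>0$. The paper runs the computation starting from $n=\floor{kr}$ and remarks that the implications reverse; your chain of equivalences just makes that reversibility explicit.
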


\begin{proof}
Let $n\in P_r^+$. Then $n=\floor{kr}$ for some $k\in\N_+$. So $kr-1<n<kr$ and after diving by $r$ we have 
 \[
 k-\frac{1}{r}<\frac{n}{r}<k.
 \]
 Therefore $\floor{\frac{n}{r}}=k-1$ and $\floor{\frac{n+1}{r}}=k$. Thus $s_n=\floor{\frac{n+1}{r}}-\floor{\frac{n}{r}}=1$. As all the implications are reversible we get the desired result.
\end{proof}

\medskip\noindent
We would like to consider the negative elements as well; so we define
 \[
 P_r=\big\{\floor{nr}:n\in\Z\setminus\{0\}\big\}. 
 \]

 \medskip\noindent
For $m>0$, we have $-m\in P_r$ if and only if $m-1\in P_r^+$. So 
 \[P_r=P_r^+\cup(-P_r^+-1).\]
We also extend the definitions of $b_n$ and $s_n$ to all integers $n$.

\medskip\noindent
Lemma~\ref{sturm-vs-beatty1} is actually correct for all $n\in\Z$:
  \begin{equation}\label{sturm-vs-beatty2}
  n\in P_r\Longleftrightarrow s_n=1.
 \end{equation}

\medskip\noindent
It is easy to see that $s_n=1$ if and only if $\{\frac{n}{r}\}>1-\frac{1}{r}$. Putting this together with (\ref{sturm-vs-beatty2}), for every $n\in\Z$, we get 
 \begin{equation}\label{sturm-vs-beatty3}
  n\in P_r\Longleftrightarrow \big\{\frac{n}{r}\big\}>1-\frac{1}{r}.
 \end{equation}

\medskip\noindent
Since $r$ is irrational, the image of $\Z\frac{1}{r}$ under $e$ is not finite, hence it is a dense subgroup of $\S$. Let $\Gamma_r$ be that subgroup, and let $h$ denote the map $n\mapsto e(\frac{n}{r})$.  So we have an isomorphism of abelian groups with a distinguished element:
 \[
 h:\big(\Z,+,-,0,1\big)\simeq\big(\Gamma_r,\cdot,^{-1},1,h(1)\big).
 \]
 By (\ref{sturm-vs-beatty3}), the image of $P_r$ under $h$ is $\big(h(-1),1\big)\cap\Gamma_r$. Therefore expanding $\Z$ by $P_r$ is the same as expanding $\Gamma_r$ by $\big(h(-1),1\big)\cap\Gamma_r$.

\medskip\noindent
Using Proposition~\ref{kthpower} and its corollaries, 
we give a criterion for certain linear combinations of integers being in $P_r$ in terms of intervals in $\S$.
 
\begin{proposition}\label{main_prop}
 Let $k\in\N_+$ and $a,c\in\Z$. Then $a+kc\in P_r$ if and only if there is $s\in\Z$ such that 
 \[
 h(c)\in\big(h(-a-1)^{1/k}\zeta_k^s,h(-a)^{1/k}\zeta_k^{s+s_a}\big).
 \]
\end{proposition}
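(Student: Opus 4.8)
The plan is to reduce the statement $a+kc\in P_r$ to a membership in the interval $\big(h(-1),1\big)$ by applying the isomorphism $h$, and then to peel off the $k$-th power using Proposition~\ref{kthpower} and its corollaries. Recall that by (\ref{sturm-vs-beatty3}) and the identification of $h(P_r)$ with $\big(h(-1),1\big)\cap\Gamma_r$, we have $a+kc\in P_r$ if and only if $h(a+kc)=h(a)h(c)^k\in\big(h(-1),1\big)$, equivalently $h(c)^k\in\big(h(-1)h(-a),h(-a)\big)=\big(h(-a-1),h(-a)\big)$. So the whole problem becomes: express the condition $\big(h(c)\big)^k\in\big(h(-a-1),h(-a)\big)$ in terms of an interval containing $h(c)$ itself.

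First I would isolate the two cases according to whether the endpoints $\alpha:=h(-a-1)$ and $\gamma:=h(-a)$ satisfy $\gamma\not<\alpha$ or $\gamma<\alpha$; this is exactly the dichotomy feeding Corollaries~\ref{kthroot1} and~\ref{kthroot2}. The point is that $\gamma<\alpha$ records a ``wraparound past $1$'' of the arguments, and one checks that this happens precisely when $s_a=0$ (i.e. $a\notin P_r$), whereas $\gamma\not<\alpha$ corresponds to $s_a=1$. Concretely, writing $\{-a/r\}$ and $\{(-a-1)/r\}$ and comparing, the inequality $h(-a)<h(-a-1)$ in the ordering on $\S\setminus\{1\}$ translates to $\{(-a-1)/r\}>\{-a/r\}$, which by the description of $s$ just before (\ref{sturm-vs-beatty3}) is equivalent to $s_a=0$. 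This is the bookkeeping step, and it is where the parameter $s_a$ in the exponent of $\zeta_k$ on the right-hand side enters.

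Then, in the case $s_a=1$ (so $\gamma\not<\alpha$), Corollary~\ref{kthroot1} gives directly that $h(c)^k\in(\alpha,\gamma)$ iff $h(c)\in\big(\zeta_k^s\alpha^{1/k},\zeta_k^s\gamma^{1/k}\big)$ for some $s\in\{0,\dots,k-1\}$, which is the asserted interval $\big(h(-a-1)^{1/k}\zeta_k^s,\,h(-a)^{1/k}\zeta_k^{s+1}\big)$ with $s+s_a=s+1$. In the case $s_a=0$ (so $\gamma<\alpha$), Corollary~\ref{kthroot2} gives $h(c)^k\in(\alpha,\gamma)$ iff $h(c)\in\big(\zeta_k^s\alpha^{1/k},\zeta_k^{s+1}\gamma^{1/k}\big)$, which matches the right-hand side with $s+s_a=s+0=s$. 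So in both cases the claimed equivalence drops out of the appropriate corollary once the case split is named correctly. Finally, I would note that $s$ can be taken in $\{0,1,\dots,k-1\}$ as remarked after Proposition~\ref{kthpower}, but since the statement only asserts existence of some $s\in\Z$ this is not strictly needed.

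The main obstacle I expect is the second step: getting the correspondence between the sign condition $\gamma<\alpha$ versus $\gamma\not<\alpha$ on the endpoints $h(-a-1),h(-a)$ and the value of $s_a$ exactly right, including the degenerate possibility that one of these endpoints equals $1$ (which happens iff $r\mid a$ or $r\mid a+1$, impossible for $r$ irrational and $a\neq 0,-1$, but the cases $a=0$ and $a=-1$ must be handled, where $h(0)=1$ or $h(-1)$ is an endpoint). Once that dictionary is pinned down, invoking Corollary~\ref{kthroot1} or~\ref{kthroot2} is purely mechanical.
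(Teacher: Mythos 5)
Your overall route is the same as the paper's: reduce $a+kc\in P_r$ to $h(c)^k\in\big(h(-a-1),h(-a)\big)$ via (\ref{sturm-vs-beatty3}), then split into the two cases of Corollaries~\ref{kthroot1} and~\ref{kthroot2} according to the relative position of the endpoints. But the dictionary you set up in the step you yourself identify as the crux is inverted. Writing $\alpha=h(-a-1)$ and $\gamma=h(-a)$, the wraparound case $\gamma<\alpha$, i.e.\ $h(-a)<h(-a-1)$, translates to $\{(-a-1)/r\}>\{-a/r\}$ as you say, but since $\{-n/r\}=1-\{n/r\}$ this is $\{(a+1)/r\}<\{a/r\}$, i.e.\ $\{a/r\}>1-\frac{1}{r}$, i.e.\ $a\in P_r$, i.e.\ $s_a=1$ --- not $s_a=0$ as you claim. (You appear to have applied the criterion ``$s_n=1$ iff $\{n/r\}>1-\frac{1}{r}$'' to $-a$ or $-a-1$ rather than to $a$.) This is exactly the ``first note'' of the paper's proof: $a\in P_r$ if and only if $h(-a)<h(-a-1)$.

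The inversion then forces a second, compensating error. In your case $s_a=1$ you invoke Corollary~\ref{kthroot1}, whose conclusion is $h(c)\in\big(\zeta_k^s\alpha^{1/k},\zeta_k^s\gamma^{1/k}\big)$ with the \emph{same} exponent $s$ on both endpoints, and you then assert that this ``is'' the target interval $\big(h(-a-1)^{1/k}\zeta_k^s,h(-a)^{1/k}\zeta_k^{s+1}\big)$. These are different intervals (the second endpoint carries $\zeta_k^{s}$ versus $\zeta_k^{s+1}$), and no reindexing of $s$ repairs this, since the mismatch is between the two endpoints of a single interval; the analogous misidentification occurs in your $s_a=0$ case. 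The correct pairing is the opposite one: $s_a=1$ goes with $\gamma<\alpha$ and Corollary~\ref{kthroot2}, whose exponent $s+1$ is literally $s+s_a$, while $s_a=0$ goes with $\gamma\not<\alpha$ and Corollary~\ref{kthroot1}, whose exponent $s$ is $s+s_a$; with that fix the rest of your argument goes through verbatim. Your closing remark that $a=0$ and $a=-1$ need separate attention (since then $1$ or $h(-1)$ is an endpoint and the convention $1<\alpha$ interferes with the dichotomy) is a fair one --- the paper's own proof is silent on it --- but you flag it without resolving it.
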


\begin{proof}
First, note that $a\in P_r$ if and only if $h(-a)<h(-a-1)$.

\medskip\noindent
By (\ref{sturm-vs-beatty3}) we have
 \[
 a+kc\in P_r \Longleftrightarrow h(a)h(c)^k\in \big(h(-1),1\big)\Longleftrightarrow h(c)^k\in\big(h(-a-1),h(-a)\big).
 \]
Now combining Corollaries \ref{kthroot1} and \ref{kthroot2} and using the first sentence of this proof, we get that
 \[
  h(c)^k\in\big(h(-a-1),h(-a)\big)
 \]
  if and only if there is $s\in\Z$ with  
\[
 h(c)\in\big(h(-a-1)^{1/k}\zeta_k^s,h(-a)^{1/k}\zeta_k^{s+s_a}\big).
\]
This  gives the desired equivalence.
\end{proof}

\begin{corollary}\label{main_cor}
  Let $k\in\N_+$ and $a,c\in\Z$. Then $a+kc\notin P_r$ if and only if there is $s\in\Z$ such that 
 \[
 h(c)\in\big[h(-a)^{1/k}\zeta_k^{s+s_a},h(-a-1)^{1/k}\zeta_k^{s+1}\big].
 \]
\end{corollary}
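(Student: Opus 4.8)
The plan is to derive Corollary~\ref{main_cor} directly from Proposition~\ref{main_prop} by complementation. Since $D_{k,+}^\Z\cup D_{k,-}^\Z = k\Z$, we have $a+kc \notin P_r$ exactly when $h(c)^k \notin (h(-a-1),h(-a))$, i.e. $h(c)^k$ lies in the complementary closed interval $[h(-a),h(-a-1)]$ (where we read the endpoints in the opposite order; this closed interval is the closure of $(h(-a),h(-a-1))$, which is the complement of the open interval $(h(-a-1),h(-a))$ in $\S$, together with the two boundary points). The case $a=-a-1$ cannot occur since $a\in\Z$, so the two endpoints are genuinely distinct and there is no degeneracy to worry about.

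So first I would restate the equivalence from the proof of Proposition~\ref{main_prop}:
\[
a+kc\in P_r \Longleftrightarrow h(c)^k\in\big(h(-a-1),h(-a)\big),
\]
and hence
\[
a+kc\notin P_r \Longleftrightarrow h(c)^k\in\S\setminus\big(h(-a-1),h(-a)\big) = \big[h(-a),h(-a-1)\big].
\]
Next I would apply the $k$-th-root description. Proposition~\ref{main_prop} already packages Corollaries~\ref{kthroot1} and~\ref{kthroot2} into the single statement that $h(c)^k\in(h(-a-1),h(-a))$ iff $h(c)\in\big(h(-a-1)^{1/k}\zeta_k^s,\;h(-a)^{1/k}\zeta_k^{s+s_a}\big)$ for some $s\in\Z$. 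The family of these open arcs, as $s$ ranges over $\Z$ (equivalently over $\{0,\dots,k-1\}$ by the remark after Proposition~\ref{kthpower}), is precisely the preimage of $(h(-a-1),h(-a))$ under the $k$-th power map. Its complement in $\S$ is therefore the preimage of the closed complementary interval, and this preimage is exactly the union over $s$ of the closed arcs obtained by closing up the gaps between consecutive open arcs. A short bookkeeping check — using that the right endpoint of the arc indexed by $s$ is $h(-a)^{1/k}\zeta_k^{s+s_a}$ and the left endpoint of the arc indexed by $s+1$ is $h(-a-1)^{1/k}\zeta_k^{s+1}$ — identifies each such closed gap-arc as $\big[h(-a)^{1/k}\zeta_k^{s+s_a},\,h(-a-1)^{1/k}\zeta_k^{s+1}\big]$, which is the asserted interval.

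The main obstacle is this last bookkeeping step: one must verify that closing up the complement of the union of the open arcs indeed yields exactly the stated closed arcs with the right root-of-unity shifts, keeping careful track of the two cases $s_a=0$ and $s_a=1$ (equivalently, whether $a\in P_r$ or not, which is where the $s_a$ in the exponent comes from) and making sure the arcs tile $\S$ with no overlap and no gap. This is essentially the same arc-combinatorics that underlies Corollaries~\ref{kthroot1} and~\ref{kthroot2}, applied to the complementary interval, so no genuinely new idea is needed; it is a matter of writing the endpoints down correctly. One should also note explicitly that the interval in the corollary is \emph{closed} precisely because the complement of an open interval in $\S$ is closed, which matches the $D_{k,-}^\Z$ side being the complement within $k\Z$.
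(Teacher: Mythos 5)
Your proposal is correct and takes essentially the same route as the paper, whose entire proof is ``Clear from the previous proposition'': one negates the conclusion of Proposition~\ref{main_prop} and observes that the complement in $\S$ of the $k$ disjoint open arcs $\big(h(-a-1)^{1/k}\zeta_k^s,h(-a)^{1/k}\zeta_k^{s+s_a}\big)$ is the union of the closed gap-arcs $\big[h(-a)^{1/k}\zeta_k^{s+s_a},h(-a-1)^{1/k}\zeta_k^{s+1}\big]$. Your write-up simply makes explicit the bookkeeping the authors leave implicit.
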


\begin{proof}
Clear from the previous proposition.
\end{proof}

\medskip\noindent
Next result will be useful in handling the cases when $k$ is negative.
\begin{lemma}\label{negations}
Let $a,c\in\Z$ and $k<0$. Then $a+kc\in P_r$ if and only if $-a-1-kc\in P_r$.
\end{lemma}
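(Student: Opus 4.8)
\textbf{Proof proposal for Lemma~\ref{negations}.}

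The plan is to reduce the statement to the basic equivalence \eqref{sturm-vs-beatty3} and then perform an elementary manipulation of fractional parts. First I would recall that, by \eqref{sturm-vs-beatty3}, for any integer $m$ we have $m\in P_r$ if and only if $\{m/r\}>1-\frac{1}{r}$, equivalently if and only if the point $h(m)=e(m/r)$ lies in the interval $\big(h(-1),1\big)$ of $\S$. So it suffices to show that $h(a+kc)\in\big(h(-1),1\big)$ if and only if $h(-a-1-kc)\in\big(h(-1),1\big)$. Writing $\beta:=h(a+kc)$, the second condition becomes $h(-1)\beta^{-1}\in\big(h(-1),1\big)$, since $h(-a-1-kc)=h(-1)\beta^{-1}$.

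The heart of the matter is therefore the purely circle-theoretic claim: for $\beta\in\Gamma_r$, one has $\beta\in\big(h(-1),1\big)$ if and only if $h(-1)\beta^{-1}\in\big(h(-1),1\big)$. I would verify this directly from the definition of the orientation, using the rule $\Cal O(\alpha,\beta,\gamma)\Leftrightarrow\Cal O(\gamma^{-1},\beta^{-1},\alpha^{-1})$ together with rotation-invariance $\Cal O(\alpha,\beta,\gamma)\Leftrightarrow\Cal O(\alpha\delta,\beta\delta,\gamma\delta)$. Concretely: $\beta\in\big(h(-1),1\big)$ means $\Cal O\big(h(-1),\beta,1\big)$; applying the inversion rule gives $\Cal O\big(1,\beta^{-1},h(1)\big)$, and then multiplying through by $h(-1)$ gives $\Cal O\big(h(-1),h(-1)\beta^{-1},1\big)$, i.e. $h(-1)\beta^{-1}\in\big(h(-1),1\big)$. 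A small point to check is that $\beta\neq h(-1)$ and $\beta\neq 1$ on both sides, which holds because $a+kc\neq 0$ and $-a-1-kc\neq 0$ — the first because, if $a+kc=0$, then $a+kc\notin P_r$ (as $0\notin P_r$ by our convention) while also $-a-1-kc=-1\notin P_r$ for the same reason, so the biconditional is trivially true in that degenerate case and we may assume $a+kc\neq 0$.

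Alternatively, and perhaps more transparently, I would argue entirely with fractional parts: set $t:=(a+kc)/r$, so $a+kc\in P_r\Leftrightarrow\{t\}>1-\frac1r$. Then $(-a-1-kc)/r=-t-\frac1r$, and since $r$ is irrational $t\notin\Z$, so $\{-t\}=1-\{t\}$ and hence $\{-t-\frac1r\}=\{1-\{t\}-\frac1r\}$. One checks that $1-\{t\}-\frac1r\in(-\frac1r,1-\frac1r)\subseteq(-1,1)$, so its fractional part is either $1-\{t\}-\frac1r$ (when this is $\geq 0$) or $2-\{t\}-\frac1r$ (when it is $<0$), and a direct comparison shows $\{-t-\frac1r\}>1-\frac1r$ exactly when $\{t\}>1-\frac1r$, with the boundary case $\{t\}=1-\frac1r$ again excluded by irrationality. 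I do not expect any serious obstacle here; the only mild care needed is the bookkeeping of which branch of the fractional part applies and the systematic exclusion of boundary values, which irrationality of $r$ always grants (together with the convention $0\notin P_r$ in the degenerate case $a+kc=0$).
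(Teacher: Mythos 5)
Your proof is correct, but it takes a genuinely different and considerably heavier route than the paper's. The paper disposes of Lemma~\ref{negations} in one line directly from the definition of $P_r$: since $r$ is irrational, $mr\notin\Z$ for $m\neq 0$, so $\floor{-mr}=-\floor{mr}-1$; hence $n=\floor{mr}$ if and only if $-n-1=\floor{-mr}$, and $m\mapsto -m$ permutes $\Z\setminus\{0\}$. (Note that the hypothesis $k<0$ plays no role in either argument; the content is simply $n\in P_r\Leftrightarrow -n-1\in P_r$ for all $n$.) You instead route the statement through the characterization (\ref{sturm-vs-beatty3}) and the orientation calculus on $\S$, reducing to the symmetry of the interval $\big(h(-1),1\big)$ under $\beta\mapsto h(-1)\beta^{-1}$. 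That argument is sound, including your treatment of the degenerate endpoints $\beta\in\{1,h(-1)\}$, and it has the mild virtue of fitting the circle-group formalism used throughout Section~\ref{beatty_section}; but it front-loads machinery that the floor-function identity makes unnecessary. One correction to your second, fractional-part variant: the boundary case $\{t\}=1-\frac{1}{r}$ is \emph{not} excluded by irrationality. It occurs exactly when $a+kc=-1$, since $\{-\frac{1}{r}\}=1-\frac{1}{r}$. It is harmless only because then both $-1$ and $0$ fail to lie in $P_r$, so the biconditional holds with both sides false --- the same dispatch you use for $a+kc=0$ --- and your first, circle-theoretic argument already covers this point correctly by checking $\beta\neq h(-1)$. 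As written, though, the sentence claiming irrationality excludes that case is false and should be replaced by this explicit check.
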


\begin{proof}
Clear from the fact that $\floor{-x}=-\floor{x}-1$ for $x\notin\Z$.
\end{proof}

\begin{definition}
For  $a\in\Z$, $k\in\N_+$, and $s\in\{0,1,\dots,k-1\}$, let 

 \[
  U_{a,k,s}:=\big(h(-a-1)^{1/k}\zeta_k^s,h(-a)^{1/k}\zeta_k^{s+s_a}\big),
 \]
 \[
  V_{a,k,s}:=\big[h(-a)^{1/k}\zeta_k^{s+s_a},h(-a-1)^{1/k}\zeta_k^{s+1}\big].
 \]
 Also let 
  \[
   U_{a,k}:=\bigcup_{s=0}^{k-1}U_{a,k,s}
\quad\text{ and }\quad
   V_{a,k}:=\bigcup_{s=0}^{k-1}V_{a,k,s}.
    \]  
 We extend the definitions to $k=0$ as follows:
 \[
U_{a,0}:=
  \begin{cases}
 \S & :\text{if }a\in P_r\\
 \emptyset  & :\text{if }a\notin P_r
  \end{cases} 
 \quad\text{ and }\quad
 V_{a,0}:=
  \begin{cases}
 \emptyset & :\text{if }a\in P_r\\
 \S & :\text{if }a\notin P_r
  \end{cases} 
 \]
  Finally, we let $\tilde V_{a,k}$ denote the interior of $V_{a,k}$.
\end{definition}

\medskip\noindent
With this notation in hand, Proposition~\ref{main_prop} and Corollary~\ref{main_cor} translate as follows: Given $k\in\N$ and $a,c\in\Z$ we have 
 \begin{equation}\label{5}
  a+kc\in P_r\Longleftrightarrow h(c)\in U_{a,k},
 \end{equation}
and
 \begin{equation}\label{6}
  a+kc\notin P_r\Longleftrightarrow h(c)\in V_{a,k}.
 \end{equation}
 
 \begin{lemma}\label{U's}
 Let $a,b\in\Z$ and $k,l\in\N_+$.  Suppose $g=\gcd(k,l)$ and write $k=gk'$ and $l=gl'$. Then the following hold.
  \begin{enumerate}
   \item Suppose $\zeta_{k'}<h(1)$. Then there is $s\in\Z$ such that $h(a)^{1/k}\zeta_{k}^s\in U_{b,l}$.
   \item Suppose $h(1)<\zeta_{k'}$. Then there is $s\in\Z$ such that $h(a)^{1/k}\zeta_{k}^s\in U_{b,l}$ if and only if 
    \[
     h(l'a+k'b)\in\big(h(-k'),1\big).
    \]
  \end{enumerate}
 \end{lemma}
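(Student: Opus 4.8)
\textbf{Proof proposal for Lemma~\ref{U's}.}

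The plan is to reduce both statements to Proposition~\ref{kthpower} (and its corollaries) applied to the defining intervals $U_{b,l,s}$, by pulling the $k$-th root of $h(a)$ through the $l$-th power structure of $U_{b,l}$. The basic observation is that $h(a)^{1/k}\zeta_k^s$ ranges, as $s$ varies over $\Z$, exactly over the full set of $k$-th roots of $h(a)$; equivalently, over the set of $\beta\in\S$ with $\beta^k = h(a)$. So the existence statement ``there is $s$ with $h(a)^{1/k}\zeta_k^s\in U_{b,l}$'' is the same as ``some $k$-th root of $h(a)$ lies in $U_{b,l}=\bigcup_{t=0}^{l-1}U_{b,l,t}$''. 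Raising to the $l$-th power, $\beta^k=h(a)$ gives $\beta^{kl}=h(a)^l$, and $\beta\in U_{b,l,t}$ is by definition $\beta\in(h(-b-1)^{1/l}\zeta_l^t,\ h(-b)^{1/l}\zeta_l^{t+s_b})$. The idea is to apply Proposition~\ref{kthpower} with exponent $k$ to the interval $(h(-b-1)^{1/l}\zeta_l^t, h(-b)^{1/l}\zeta_l^{t+s_b})$ — note its length is $h(-b)^{1/l}h(-b-1)^{-1/l}\zeta_l^{s_b}$, which for the relevant configuration has argument less than $1/k$ precisely when $\zeta_{k'}$ compares with $h(1)$ as in the hypotheses — so that $\beta^k$ lying in the $k$-th power of that interval is controlled by whether $\beta$ lies in one of the $\zeta_k$-translates of it.

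More concretely, I would first write $h(a)^{1/k}\zeta_k^s\in U_{b,l}$, unwind via \eqref{5} backwards: since $U_{b,l}$ is exactly the locus of $c$-values (under $h$) for which $b+lc\in P_r$, the point $h(a)^{1/k}\zeta_k^s$ lands in $U_{b,l}$ iff $h(b)\bigl(h(a)^{1/k}\zeta_k^s\bigr)^{l}\in(h(-1),1)$, i.e. iff $h(b)h(a)^{l/k}\zeta_k^{sl}\in(h(-1),1)$. Raising everything to the $k$-th power kills the $\zeta_k^{sl}$ factor and gives $h(b)^k h(a)^l\in(h(-1),1)^{\text{-ish}}$, but one must be careful: the map $\gamma\mapsto\gamma^k$ on $\S$ is $k$-to-$1$, so $x\in I$ does not follow from $x^k\in I^k$ without a length restriction. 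This is where $g=\gcd(k,l)$, $k=gk'$, $l=gl'$ enters: the $k$-th roots $\{h(a)^{1/k}\zeta_k^s\}$ raised to the $l$-th power give the set $\{h(a)^{l/k}\zeta_k^{sl}\}=\{h(a)^{1/k'}\zeta_{k'}^{\,s'}\}$ (since $\gcd(l/g,\,k/g)=\gcd(l',k')=1$, the exponents $sl \bmod k$ sweep out all multiples of $g$, i.e. all $s'k$ with the right normalization), so effectively we are asking whether some $k'$-th root of $h(a)$, multiplied by $h(b)$, lands in $(h(-1),1)$. Now the length hypothesis $\zeta_{k'} < h(1)$ versus $h(1) < \zeta_{k'}$ is exactly the dichotomy of whether the arc $(h(-1),1)$ (of length $h(1)$) is shorter than $\zeta_{k'}$, so that Proposition~\ref{kthpower}/Corollary~\ref{kthroot1} applies cleanly.

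In case (1), $\zeta_{k'}<h(1)$ means the arc has length at least $\zeta_{k'}=e(1/k')$, so it contains a full period of the $k'$-th roots of unity; hence \emph{every} value of $h(b)$ times some $k'$-th root of $h(a)$ can be pushed into $(h(-1),1)$ by choosing the right root — i.e. the existence statement always holds, matching the unconditional conclusion. In case (2), $h(1)<\zeta_{k'}$ means the arc is too short to automatically contain such a point, and Corollary~\ref{kthroot1} (applied to the $k'$-th power) tells us $\bigl(h(a)^{1/k'}\bigr)^{k'}=h(a)$ sits in the appropriate arc iff the original point sits in one of $k'$ specified sub-arcs; chasing the endpoints through, the condition becomes $h(b)^{k'}h(a)^{l'}\in(h(-k'),1)$ — wait, one must track the $l'$ versus $l$ and $k'$ versus $k$ carefully: raising $h(b)h(a)^{1/k'}$-type expressions to the $k'$ power and using $l=gl'$, $k=gk'$ yields $h(l'a+k'b)$ as the argument (the $g$'s cancel in the ratio $l/k=l'/k'$), and the arc $(h(-1),1)$ scaled by the $k'$-th power map becomes the arc $(h(-1)^{?}, 1)$; I expect the endpoint to come out as $h(-k')$, i.e.\ $e(-k'/r)$, because the ``$a\in P_r$ vs $a\notin P_r$'' boundary data for $b$ gets absorbed and the arc $(h(-1),1)$ of length $h(1)$ under $z\mapsto z^{k'}$ has its lifted/pulled-back endpoint shifted by $k'$ copies. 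So the target interval $(h(-k'),1)$ in the statement is precisely the preimage condition, and this is the content of (2).

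The main obstacle I anticipate is the bookkeeping in the second bullet: correctly identifying that the set of $l$-th powers of the $k$-th roots of $h(a)$ equals the set of $k'$-th roots of $h(a)$ (using $\gcd(k',l')=1$ so that $s\mapsto sl\bmod k$ is a bijection on cosets of $g\Z/k\Z$), and then tracking how the two endpoints $h(-b-1)^{1/l}\zeta_l^t$ and $h(-b)^{1/l}\zeta_l^{t+s_b}$ of $U_{b,l,t}$ transform under the $k$-th power so that the length condition for Proposition~\ref{kthpower} is met and the resulting interval has the claimed endpoints $h(-k')$ and $1$. The value $h(-k')$ in particular will require care: it should emerge from raising $h(-1)^{1/k'}$-style endpoints (the endpoints of $(h(-1),1)$ pulled back along $z\mapsto z^{k'}$, which are $h(-1)^{1/k'}\zeta_{k'}^{s}$ and $1\cdot\zeta_{k'}^{s'}$) to the $k'$-th power and reading off $h(-1)=e(-1/r)$ becoming $e(-k'/r)=h(-k')$ after the argument is multiplied by $k'$. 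Once these translations are pinned down, applying \eqref{5} once more to re-express the final arc-membership as $a+kc\in P_r$-type statements — or here directly as $h(l'a+k'b)\in(h(-k'),1)$ — finishes the argument. I would relegate the two bullets to essentially the same computation, with the only difference being whether the arc length exceeds $\zeta_{k'}$ (bullet 1, unconditional) or not (bullet 2, conditional), so the proof should be short modulo this endpoint arithmetic.
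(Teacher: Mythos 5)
Your proposal is correct and follows essentially the same route as the paper: translate $\beta\in U_{b,l}$ into $\beta^l\in\big(h(-b-1),h(-b)\big)$ via Corollaries~\ref{kthroot1} and \ref{kthroot2}, use $\gcd(k',l')=1$ to identify the $l$-th powers of the shifted $k$-th roots of $h(a)$ with the full set of $k'$-th roots of $h(l'a)$, and then split on whether the arc length $h(1)$ exceeds $\zeta_{k'}$ (unconditional existence) or not (apply Proposition~\ref{kthpower} to get $h(l'a+k'b)\in(h(-k'),1)$). The only blemishes are self-corrected slips (e.g., ``$k'$-th root of $h(a)$'' where you mean a $k'$-th root of $h(l'a)$, and the abandoned first attempt to apply Proposition~\ref{kthpower} directly to $U_{b,l,t}$ with exponent $k$, whose length condition is not the stated dichotomy); the argument you actually carry out matches the paper's.
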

 
 \begin{proof}
 Using Corollaries ~\ref{kthroot1} and \ref{kthroot2},  $h(a)^{1/k}\zeta_{k}^s\in U_{b,l}$ if and only if
  \[
  h(l'a)^{1/k'}\zeta_{k'}^{\floor{\frac{la}{r}}-l\floor{\frac{a}{r}}+sl'}\in\big(h(-b-1),h(-b)\big).
  \]
  Since $\gcd(k',l')=1$, there is $s\in\Z$ with  $h(a)^{1/k}\zeta_{k}^s\in U_{b,l}$ if and only if there is $t\in\Z$ with $h(l'a)^{1/k'}\zeta_{k'}^{t}\in\big(h(-b-1),h(-b)\big)$. 
  
\medskip\noindent
If $\zeta_{k'}<h(1)$, then there is such a $t$, regardless of what $a,b,l$ are. 

\smallskip\noindent
If $h(1)<\zeta_{k'}$, then we may use Proposition~\ref{kthpower} to conclude that there is $s\in\Z$ with $h(a)^{1/k}\zeta_{k}^s\in U_{b,l}$ if and only if 
\[
h(l'a)\in\big(h(-k'b-k'),h(-k'b)\big).
\]
After simplification, this means $h(l'a+k'b)\in \big(h(-k'),1\big)$. 
  
 \end{proof}

\medskip\noindent
This proof can be modified to prove the next analogous result.

 \begin{lemma}\label{V's}
 Let $a,b\in\Z$ and $k,l\in\N_+$.  Suppose $g=\gcd(k,l)$ and write $k=gk'$ and $l=gl'$. Then the following hold.
  \begin{enumerate}
   \item Suppose $\zeta_{k'}<h(-1)$. Then there is $s\in\Z$ such that $h(a)^{1/k}\zeta_{k}^s\in V_{b,l}$.
   \item Suppose $h(-1)<\zeta_{k'}$. Then there is $s\in\Z$ such that $h(a)^{1/k}\zeta_{k}^s\in V_{b,l}$ if and only if 
    \[
     h(l'a+k'b')\in\big[1,h(-k')\big].
    \]
  \end{enumerate}
 \end{lemma}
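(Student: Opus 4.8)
The plan is to mimic, line by line, the proof of Lemma~\ref{U's}, replacing each occurrence of the interval $\big(h(-b-1),h(-b)\big)$ (which records $a\in P_r$) by the closed interval $\big[h(-b),h(-b-1)\big]$ (which records $a\notin P_r$), and keeping careful track of the roots of unity coming from Corollary~\ref{kthroot2} in the closed case. Concretely, I would start from the definition of $V_{b,l}=\bigcup_{s=0}^{l-1}V_{b,l,s}$ with $V_{b,l,s}=\big[h(-b)^{1/l}\zeta_l^{s+s_b},h(-b-1)^{1/l}\zeta_l^{s+1}\big]$, and observe via Lemma~\ref{kthroot_roots_of_unity} (and the computation $(\alpha^k)^{1/k}=\alpha\zeta_k^{-\floor{ak}}$ recorded in the excerpt) that $h(a)^{1/k}\zeta_k^s\in V_{b,l}$ for some $s\in\Z$ is equivalent, after passing through the $g$-fold root and collapsing the $\zeta_k$-orbit using $\gcd(k',l')=1$, to the existence of $t\in\Z$ with $h(l'a)^{1/k'}\zeta_{k'}^{\,t}\in\big[h(-b-1),h(-b)\big]$ — i.e.\ with $h(l'a)^{1/k'}\zeta_{k'}^{\,t}$ lying in the complement interval.

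The next step splits on the length of $\big[h(-b-1),h(-b)\big]$, equivalently on whether $h(-1)<\zeta_{k'}$ or $\zeta_{k'}<h(-1)$. If $\zeta_{k'}<h(-1)$, then the union $\bigcup_{t=0}^{k'-1}\zeta_{k'}^{\,t}\big[h(-b-1),h(-b)\big]$ covers all of $\S$ (the $k'$ translates of an interval of length $\ge\zeta_{k'}^{-1}$ overlap and wrap around), so such a $t$ always exists; this gives part (1). If $h(-1)<\zeta_{k'}$, the interval is short enough that Proposition~\ref{kthpower} applies directly: there is $t\in\Z$ with $h(l'a)^{1/k'}\zeta_{k'}^{\,t}\in\big[h(-b-1),h(-b)\big]$ exactly when $h(l'a)\in\big[h(-k'b-k'),h(-k'b)\big]$, which simplifies (multiply through by $h(k'b)$ and use $h(-k')=h(k'b)\cdot h(-k'b-k')$) to $h(l'a+k'b)\in\big[1,h(-k')\big]$ — or, using the notation $b'$ that the paper reserves in its $D_{m,-}$ conventions (and the relation $P_r=P_r^+\cup(-P_r^+-1)$, so that the role of $b$ under negation is played by $-b-1$), to $h(l'a+k'b')\in\big[1,h(-k')\big]$ as stated.

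One subtlety worth being explicit about is the bookkeeping of the exponent $\floor{\frac{la}{r}}-l\floor{\frac{a}{r}}$ and the $s_b$-shift in the endpoint of $V_{b,l,s}$: in the open case of Lemma~\ref{U's} these produced the factor $\zeta_{k'}^{\,\floor{\frac{la}{r}}-l\floor{\frac{a}{r}}+sl'}$, and since in both the $U$ and $V$ definitions the inner root of unity on the left endpoint is $\zeta_l^{s}$ times either $\zeta_l^{0}$ or $\zeta_l^{s_b}$, the same computation goes through verbatim; the only genuine change is open versus closed intervals, and the extra $\zeta_{k'}$ appearing on the right endpoint of $V_{b,l,s}$ (namely $\zeta_l^{s+1}$ versus $\zeta_l^{s+s_a}$ in $U$), which is precisely what turns the hypothesis $\zeta_{k'}<h(1)$ into $\zeta_{k'}<h(-1)$. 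I expect the main obstacle to be getting these endpoint indices and the closed/open distinction exactly right so that the $\zeta_{k'}$-translates in part (1) genuinely cover $\S$ and the simplification in part (2) lands on $\big[1,h(-k')\big]$ rather than some shifted variant; none of it is deep, but it is the kind of calculation where an off-by-one in the exponent of $\zeta_{k'}$ would be easy to commit, so I would double-check it against the $k=l$ (hence $k'=l'=1$) case, where the statement must reduce to \eqref{6}.
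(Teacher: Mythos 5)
Your overall route is exactly the paper's: the paper gives no separate proof of Lemma~\ref{V's}, saying only that the proof of Lemma~\ref{U's} ``can be modified'', and the modifications you identify --- reducing, via $\gcd(k',l')=1$, to the existence of $t$ with $h(l'a)^{1/k'}\zeta_{k'}^{t}$ in a fixed closed interval, the covering argument when that interval is longer than $\zeta_{k'}$, and Proposition~\ref{kthpower} followed by a translation by $h(k'b)$ otherwise --- are the right ones. (The $b'$ in the statement is simply a typo for $b$; your attempted reading $b'=-b-1$ would change the formula to $h(l'a-k'b-k')$ and is not what is meant.)

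There is, however, one concrete error to fix: on the circle $[\alpha,\beta]$ and $[\beta,\alpha]$ are different sets, and the complement of $(h(-b-1),h(-b))$ is $\big[h(-b),h(-b-1)\big]$, not $\big[h(-b-1),h(-b)\big]$ as you write throughout. This matters twice. First, the length of $\big[h(-b),h(-b-1)\big]$ is $h(-1)$, which is why the dichotomy is $\zeta_{k'}\lessgtr h(-1)$; the interval you wrote has length $h(1)$, so your sentence equating the split on its length with the comparison against $h(-1)$ is false as stated. Second, in part (2) your interval literally yields $h(l'a)\in\big[h(-k'b-k'),h(-k'b)\big]$ and hence $h(l'a+k'b)\in\big[h(-k'),1\big]$, which is essentially the \emph{complement} of the claimed set; it does not ``simplify'' to $\big[1,h(-k')\big]$. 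With the correctly oriented interval one gets $h(l'a)\in\big[h(-k'b),h(-k'b-k')\big]$ and then $h(l'a+k'b)\in\big[1,h(-k')\big]$, as required and as your own $k=l=1$ sanity check against (\ref{6}) would have revealed. The ideas are all present; only the orientation bookkeeping needs to be redone.
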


\medskip\noindent
Let $\vec a=(a_1,\dots,a_n)\in\Z^n$, $\vec k=(k_1,\dots,k_n)\in\N^n$, $I\subseteq[n]$ and $J\subseteq[n]\setminus I$ be given.  
 We define
  \[
  V_{\vec a,\vec k,J}=\bigcap_{j\in J}(V_{a_j,k_j}\setminus \tilde V_{a_j,k_j})\cap \bigcap_{j\in [n]\setminus(I\cup J)}\tilde V_{a_j,k_j}.
  \]
Note that $V_{a_j,k_j}\setminus \tilde V_{a_j,k_j}$ has $2 k_j$ many points. Therefore $V_{\vec a,\vec k,J}$ is finite for $J\neq \emptyset$ and $V_{\vec a,\vec k,\emptyset}$ is an open subset of $\S$.

\medskip\noindent
We record the following without proof.

\begin{lemma}\label{nonempty_finite}
Let $\vec a=(a_1,\dots,a_n)\in\Z^n$, $\vec k=(k_1,\dots,k_n)\in\N_+^n$, and $\emptyset\neq J\subseteq [n]$.  Then $h(c)\in V_{\vec a,\vec k, J}\cap\Gamma_r$ if and only if there is a subset $J'$ of $J$ such that $c=\frac{-a_j}{k_j}$ for every $j\in J'$, $c=\frac{-a_j-1}{k_j}$ for every $j\in J\setminus J'$, and $h(c)\in\bigcap_{j\in[n]\setminus J} \tilde V_{a_j,k_j}$.

\end{lemma}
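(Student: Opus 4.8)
The statement to prove, Lemma~\ref{nonempty_finite}, characterizes when a point $h(c)$ with $c\in\Z$ lands in the finite set $V_{\vec a,\vec k,J}$ (for $J\neq\emptyset$): it must sit on the ``boundary'' of every $V_{a_j,k_j}$ with $j\in J$ in a very constrained way, and in the interior of the remaining $V_{a_j,k_j}$ for $j\in[n]\setminus(I\cup J)$.

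\textbf{Plan of proof.} The plan is to analyze a single index $j\in J$ first, and then intersect. Recall $V_{a_j,k_j}=\bigcup_{s=0}^{k_j-1}V_{a_j,k_j,s}$ where each $V_{a_j,k_j,s}=\big[h(-a_j)^{1/k_j}\zeta_{k_j}^{s+s_{a_j}},\,h(-a_j-1)^{1/k_j}\zeta_{k_j}^{s+1}\big]$ is a closed interval. So $V_{a_j,k_j}\setminus\tilde V_{a_j,k_j}$ consists precisely of the $2k_j$ endpoints $h(-a_j)^{1/k_j}\zeta_{k_j}^{s+s_{a_j}}$ and $h(-a_j-1)^{1/k_j}\zeta_{k_j}^{s+1}$ for $s=0,\dots,k_j-1$ (one should check these are genuinely $2k_j$ distinct points, using irrationality of $r$ and the fact that the $U$'s and $V$'s partition a neighborhood appropriately). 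First I would show: for $c\in\Z$, $h(c)$ equals one of these boundary points if and only if $c=\tfrac{-a_j}{k_j}$ (giving the point $h(-a_j)^{1/k_j}\zeta_{k_j}^{s+s_{a_j}}$ for the appropriate $s$) or $c=\tfrac{-a_j-1}{k_j}$ (giving $h(-a_j-1)^{1/k_j}\zeta_{k_j}^{s+1}$). The key computation here is that $h(c)^{k_j}=h(k_jc)$, so if $h(c)$ is a $k_j$-th root of $h(-a_j)$ then $h(k_j c)=h(-a_j)$, i.e. $k_j c\equiv -a_j$; but $h$ is injective on $\Z$ modulo nothing (it is an isomorphism onto $\Gamma_r$ since $r$ is irrational and $\Gamma_r\cap\langle\text{torsion}\rangle$ is trivial), so actually $k_j c=-a_j$ exactly. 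This forces $k_j\mid a_j$ and pins down $c$. Conversely, when $k_j\mid a_j$ (resp. $k_j\mid a_j+1$) one checks $h(-a_j/k_j)$ (resp. $h((-a_j-1)/k_j)$) is indeed such a boundary point via Lemma~\ref{kthroot_roots_of_unity}.

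\textbf{Assembling the intersection.} Once the single-index analysis is done, the statement follows by bookkeeping. If $h(c)\in V_{\vec a,\vec k,J}\cap\Gamma_r$, then for each $j\in J$ the point $h(c)$ is a boundary point of $V_{a_j,k_j}$, so by the above either $c=-a_j/k_j$ or $c=(-a_j-1)/k_j$; collect the former indices into $J'$ and the latter into $J\setminus J'$. Simultaneously $h(c)\in\tilde V_{a_j,k_j}$ for $j\in[n]\setminus(I\cup J)$, which is literally the condition $h(c)\in\bigcap_{j\in[n]\setminus J}\tilde V_{a_j,k_j}$ — wait, one must be careful: the displayed conclusion intersects over $[n]\setminus J$, which includes $I$; but in the definition of $V_{\vec a,\vec k,J}$ the indices in $I$ impose no condition, so strictly speaking the conclusion as stated is stronger. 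I would resolve this by noting that for $j\in I$ the condition $h(c)\in\tilde V_{a_j,k_j}$ is automatically equivalent to something — actually re-examining, I suspect the intended reading is $\bigcap_{j\in[n]\setminus(I\cup J)}\tilde V_{a_j,k_j}$, matching the definition, and I would state it that way (or point out the $I$-indices impose nothing and can be harmlessly included or excluded). The converse direction is immediate: given such a $J'$ and the interior condition, each defining condition of $V_{\vec a,\vec k,J}$ is satisfied by the single-index computation run backwards.

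\textbf{Main obstacle.} The routine-but-delicate part is the single-index claim that $h(c)$ is a boundary point of $V_{a_j,k_j}$ \emph{only} in the two ways listed — in particular ruling out that $h(c)$ coincides with a boundary point of the ``wrong'' flavor (an endpoint $h(-a_j)^{1/k_j}\zeta_{k_j}^{s+s_{a_j}}$ versus $h(-a_j-1)^{1/k_j}\zeta_{k_j}^{s+1}$) for some unrelated $s$, and handling the interaction of the root-of-unity twists $\zeta_{k_j}^s$ with the value of $s_{a_j}$. This reduces to: $h(c)^{k_j}\in\{h(-a_j),h(-a_j-1)\}$ together with the interval constraint defining which $s$ occurs, and then invoking injectivity of $h$ on $\Z$ to get an \emph{equality} of integers rather than a congruence. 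The identification of which $s$ is forced (and that it is consistent across all $j\in J$, since $c$ is a single fixed integer) is the bit that needs the careful argument with Lemma~\ref{kthroot_roots_of_unity}; everything else is formal intersection of intervals as in Proposition~\ref{intersection_of_intervals}.
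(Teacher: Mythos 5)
Your proposal is correct. Note that the paper records this lemma explicitly \emph{without proof}, so there is no argument of the authors to compare against; what you outline is the natural verification and it supplies exactly what they omit. The two essential points are both present in your plan: (i) the set $V_{a_j,k_j}\setminus\tilde V_{a_j,k_j}$ consists precisely of the $2k_j$ interval endpoints, which requires observing that the separating open intervals $U_{a_j,k_j,s}$ are nonempty (this follows from $h(-a_j)\neq h(-a_j-1)$, i.e.\ from irrationality of $r$) and that the exponents $s+s_{a_j}$ and $s+1$ run over all residues modulo $k_j$ as $s$ ranges over $\{0,\dots,k_j-1\}$; and (ii) if $h(c)$ is such an endpoint then $h(k_jc)=h(c)^{k_j}\in\{h(-a_j),h(-a_j-1)\}$, and injectivity of $h$ on $\Z$ upgrades the resulting congruence to the exact equality $k_jc=-a_j$ or $k_jc=-a_j-1$. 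The "main obstacle" you identify (ruling out an endpoint of the wrong flavor for some unrelated $s$) dissolves by the same computation: the flavor is determined by the value of $h(c)^{k_j}$, independently of $s$, and since $h(-a_j)\neq h(-a_j-1)$ the two cases are mutually exclusive, so $J'$ is well defined. Finally, the mismatch you flag between $\bigcap_{j\in[n]\setminus J}$ in the lemma's conclusion and $\bigcap_{j\in[n]\setminus(I\cup J)}$ in the definition of $V_{\vec a,\vec k,J}$ is a genuine imprecision in the paper's statement; since the lemma takes $J\subseteq[n]$ and never mentions $I$, the intended reading is $I=\emptyset$ (equivalently, one should read $[n]\setminus(I\cup J)$ where the lemma is applied in the decomposition preceding Proposition~\ref{patternQE}), and your resolution is the right one.
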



\begin{definition}\label{pattern_def}
 Let $\vec k=(k_1,\dots,k_n)\in\Z^n$ and $I\subseteq[n]$. We say that $(a_1,\dots,a_n)\in\Z^n$ realize $(\vec k,I)$-{\it pattern } if there is $c\in\Z$ such that 
  \[
  a_i+k_ic\in P_r\Longleftrightarrow i\in I.
  \]
\end{definition}

 \medskip\noindent
For $\vec k=(k_1,\dots,k_n)\in\Z^n$, let $|\vec k|=(|k_1|,\dots,|k_n|)$. Then using Lemma~\ref{negations}, $(a_1,\dots,a_n)\in\Z^n$ realizes the $(\vec k,I)$-pattern if $(a_1',\dots,a_n')$ realizes the $(|\vec k|,I)$-pattern where $a_i'=a_i$ for $k_i\geq 0$ and $a_i'=-a_i-1$ for $k_i<0$.   Therefore, we may focus on the case that $\vec k\in\N^n$.


 \medskip\noindent
By $(\ref{5})$ and $(\ref{6})$, if $\vec k\in\N^n$, then $(a_1,\dots,a_n)\in\Z^n$ realize $(\vec k,I)$-pattern if and only if 
 \[
 \bigcap_{i\in I}U_{a_i,k_i}\cap\bigcap_{j\notin I}V_{a_j,k_j}\cap\Gamma_r\neq\emptyset.
 \]
 
 \medskip\noindent
 We may decompose the intersection above as 
  \[
  \left(\bigcap_{i\in I}U_{a_i,k_i}\cap \bigcap_{j\notin I}\tilde V_{a_j,k_j}\cap\Gamma_r\right)\cup\left( \bigcap_{i\in I}U_{a_i,k_i}\cap \bigcup_{\emptyset\neq J\subseteq[n]\setminus I} V _{\vec a,\vec k,J}\cap\Gamma_r\right).
  \]
Using Lemma~\ref{nonempty_finite}, the finite component is under control. 

\medskip\noindent
Let's focus on $\bigcap_{i\in I}U_{a_i,k_i}\cap V_{\vec a,\vec k,\emptyset}\cap\Gamma_r$.
Since $\Gamma_r$ is dense in $\S$, this set is nonempty if and only if the open set
 \[
  \bigcap_{i\in I}U_{a_i,k_i}\cap \bigcap_{j\notin I}\tilde V_{a_j,k_j}
 \] 
is nonempty. 

\medskip\noindent
Using Proposition~\ref{intersection_of_intervals}, it is easy to see that this intersection is nonempty if and only if one of the following holds:

 \begin{enumerate}
  \item there are $i_0\in I$ and $s\in\Z$ such that 
 
    \[h(-a_{i_0}-1)^{1/k_{i_0}}\zeta_{k_{i_0}}^s\in \bigcap_{i\in I,i\neq i_0}U_{a_i,k_i}\cap\bigcap_{j\notin I}\tilde V_{a_j,k_j},\]
  
  \item there are $j_0\in[n]\setminus I$ and $s\in\Z$ such that 
    \[h(-a_{j_0})^{1/k_{j_0}}\zeta_{k_{j_0}}^s\in \bigcap_{i\in I}U_{a_i,k_i}\cap\bigcap_{j\notin I,j\neq j_0}\tilde V_{a_j,k_j}.\]
 \end{enumerate}

\medskip\noindent
In order to summarize these observations, we make the following definitions: let $k,l\in\N$ with $g=\gcd(k,l)$ and $k':=k/g$, $l':=l/g$
 \[
 A_{k,l}:=
  \begin{cases}
 \Z\times\Z & :\text{if }\zeta_{k'}<h(1)\\ 
 \left\{(a,b)\in\Z\times\Z:h(l'a+k'b)\in\big(h(-k'),1\big)\right\}  & :\text{if }h(1)<\zeta_{k'}
  \end{cases} 
 \]
  
  \[
 B_{k,l}:=
  \begin{cases}
 \Z\times\Z & :\text{if }\zeta_{k'}<h(-1)\\ 
 \left\{(a,b)\in\Z\times\Z:h(l'a+k'b)\in\big(1,h(-k')\big)\right\}  & :\text{if }h(-1)<\zeta_{k'}
  \end{cases} 
 \]
 
%
%
%
%
 
\medskip\noindent
Combining Lemmas \ref{U's} and \ref{V's} with the observations above, we obtain the following. 

\begin{proposition}\label{patternQE}
Let $\vec k=(k_1,\dots,k_n)\in\N^n$ and $I\subseteq[n]$. Then $\vec{a}=(a_1,\dots,a_n)\in\Z^n$ realizes the $(\vec k,I)$-pattern if and only if one of the following conditions holds:
 \begin{enumerate}
  \item $\bigcap_{i\in I}U_{a_i,k_i}\cap \bigcup_{\emptyset\neq J\subseteq[n]\setminus I} V _{\vec a,\vec k,J}\cap\Gamma_r\neq\emptyset$.
  \item there is $i_0\in I$ with $(-a_{i_0}-1,a_i)\in A_{k_{i_0},k_i}$ for every $i\in I\setminus\{i_0\}$ and $(-a_{i_0}-1,a_j)\in B_{k_{i_0},k_j}$ for every $j\in [n]\setminus I$.
  \item there is $j_0\in I$ with $(-a_{j_0},a_i)\in A_{k_{j_0},k_i}$ for every $i\in I$ and $(-a_{j_0},a_j)\in B_{k_{j_0},k_j}$ for every $j\in [n]\setminus (I\cup\{j_0\})$.
 \end{enumerate}
\end{proposition}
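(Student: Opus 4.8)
The plan is to assemble Proposition~\ref{patternQE} from the pieces already prepared in this section, so the proof is essentially a matter of bookkeeping rather than new ideas. Since we have reduced to $\vec k\in\N^n$, we start from the established equivalence: $\vec a$ realizes the $(\vec k,I)$-pattern if and only if $\bigcap_{i\in I}U_{a_i,k_i}\cap\bigcap_{j\notin I}V_{a_j,k_j}\cap\Gamma_r\neq\emptyset$. First I would record the decomposition of this intersection into its ``finite part'' (the terms involving some nonempty $J\subseteq[n]\setminus I$, controlled by Lemma~\ref{nonempty_finite}) and its ``open part'' $\bigcap_{i\in I}U_{a_i,k_i}\cap V_{\vec a,\vec k,\emptyset}\cap\Gamma_r$. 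The first alternative in the statement is exactly the assertion that the finite part is nonempty, so nothing further is needed there.

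Next I would treat the open part. Because $\Gamma_r$ is dense in $\S$ (Section~\ref{beatty_section}), this set meets $\Gamma_r$ precisely when the open set $\bigcap_{i\in I}U_{a_i,k_i}\cap\bigcap_{j\notin I}\tilde V_{a_j,k_j}$ is nonempty. Each $U_{a_i,k_i}$ is a finite union of orientation intervals $U_{a_i,k_i,s}$, and each $\tilde V_{a_j,k_j}$ is likewise a finite union of (interiors of) intervals; so the whole set is a finite union of intersections of finitely many orientation intervals. Applying Proposition~\ref{intersection_of_intervals} to each such intersection, nonemptiness is equivalent to: some left endpoint of one of the constituent intervals lies in the intersection of the others, or some right endpoint does. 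Tracking the endpoints through the definitions of $U_{a,k,s}$ and $V_{a,k,s}$, the candidate left endpoints are the points $h(-a_{i_0}-1)^{1/k_{i_0}}\zeta_{k_{i_0}}^{s}$ for $i_0\in I$, and the candidate right endpoints are $h(-a_{j_0})^{1/k_{j_0}}\zeta_{k_{j_0}}^{s}$ for $j_0\notin I$ — which is exactly the dichotomy recorded in the two displayed bullet conditions just before the statement.

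It then remains to rewrite ``the point $h(-a_{i_0}-1)^{1/k_{i_0}}\zeta_{k_{i_0}}^{s}$ lies in $\bigcap_{i\neq i_0}U_{a_i,k_i}\cap\bigcap_{j\notin I}\tilde V_{a_j,k_j}$ for some $s$'' as a membership condition on $\vec a$. Here I would invoke Lemmas~\ref{U's} and~\ref{V's} term by term: for each $i$, ``$\exists s:\ h(-a_{i_0}-1)^{1/k_{i_0}}\zeta_{k_{i_0}}^{s}\in U_{a_i,k_i}$'' is, by Lemma~\ref{U's}, either automatic (when $\zeta_{k'}<h(1)$ for the relevant $k'$) or equivalent to a linear condition $h(l'a+k'b)\in(h(-k'),1)$ — which is precisely the definition of $(-a_{i_0}-1,a_i)\in A_{k_{i_0},k_i}$; similarly membership in $\tilde V_{a_j,k_j}$ translates via Lemma~\ref{V's} into $(-a_{i_0}-1,a_j)\in B_{k_{i_0},k_j}$. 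The one subtlety to check is that a single witness $s$ can be chosen simultaneously for all the $U$- and $\tilde V$-conditions; but the coprimality argument inside the proof of Lemma~\ref{U's} shows that the relevant constraint on $s$ modulo $k_{i_0}$ collapses, so a common $s$ exists iff each condition holds separately. This compatibility-of-witnesses point is the only place where one must be slightly careful, and is what I expect to be the main (mild) obstacle; everything else is direct substitution. The case of a right endpoint $h(-a_{j_0})^{1/k_{j_0}}\zeta_{k_{j_0}}^{s}$ is symmetric and yields the third alternative.
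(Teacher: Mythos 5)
Your overall route is the one the paper itself takes (the proposition is presented as a summary of the observations preceding it, with no separate proof), and the place you single out as ``the one subtlety'' is exactly where the argument does not close. Passing from the displayed bullet condition --- a \emph{single} $s$ with $h(-a_{i_0}-1)^{1/k_{i_0}}\zeta_{k_{i_0}}^{s}$ in the whole intersection --- to condition (2) is fine; the problem is the converse. Your justification, that the coprimality argument in Lemma~\ref{U's} makes the constraint on $s$ ``collapse'', misreads that argument. What the proof of Lemma~\ref{U's} actually shows is that the set $S_i$ of admissible residues $s\bmod k_{i_0}$ for the $i$-th condition is carried, by the bijection $s\mapsto sl_i'\bmod k_i'$, onto a set of residues modulo $k_i':=k_{i_0}/\gcd(k_{i_0},k_i)$; hence $S_i$ is a union of congruence classes mod $k_i'$, in general a nonempty \emph{proper} one. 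The pairwise conditions $(-a_{i_0}-1,a_i)\in A_{k_{i_0},k_i}$ say only that each $S_i$ is nonempty, and nothing forces $\bigcap_i S_i\neq\emptyset$ when the moduli $k_i'$ share a common factor.

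This is not a cosmetic issue. Take $r>4$, $\vec k=(4,2,2)$, $I=[3]$, $i_0=1$, and any $a_1$. Writing $\omega_s=h(-a_1-1)^{1/4}\zeta_4^s$, one has $\omega_s\in U_{a_j,2}$ iff $\omega_s^2=\pm h(-a_1-1)^{1/2}$ lies in the arc $(h(-a_j-1),h(-a_j))$, a condition depending only on $s\bmod 2$. Since $\{h(-a):a\in\Z\}$ is dense and each such arc has length $1/r<1/4$, you can choose $a_2$ so that its arc contains $h(-a_1-1)^{1/2}$ (giving $S_2=\{0,2\}$) and $a_3$ so that its arc contains the antipode $-h(-a_1-1)^{1/2}$ (giving $S_3=\{1,3\}$), the two arcs being disjoint. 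Then $(-a_1-1,a_2)$ and $(-a_1-1,a_3)$ both lie in $A_{4,2}$ by Lemma~\ref{U's}, so condition (2) holds, yet $U_{a_2,2}\cap U_{a_3,2}$ is the preimage under squaring of the empty intersection of the two arcs, so the pattern is not realized. So the step you flagged is a genuine gap: one needs a joint condition guaranteeing a common witness $s$, not merely the conjunction of memberships in the sets $A_{k_{i_0},k_i}$ and $B_{k_{i_0},k_j}$. The paper supplies no argument here either, so the defect lies in the summary leading to the proposition as much as in your write-up; but as a proof of the statement as written, your argument fails at exactly this point.
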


\medskip\noindent
The final result of this section expresses the interval $\big(h(k),1\big)$ in terms of $P_r$ when $h(1)<\zeta_k$.

\begin{lemma}\label{cosets}
Let $k>0$ be such that $h(1)<\zeta_k$. Then $\alpha\in\big(1,h(k)\big)$ if and only if
 \[
 \alpha^{-1}\in\bigcup_{i=0}^{k-1}h(-i) P_r\cup\{h(-1),h(-2),\dots,h(-(k-1))\}.
 \]
 \emph{(Here $h(i)P_r$ is short for the interval $\big(h(-i-1),h(-i)\big)$.)}
\end{lemma}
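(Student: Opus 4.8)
## Proof Proposal for Lemma~\ref{cosets}

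The plan is to translate the statement about the interval $\big(1,h(k)\big)$ into a disjoint-interval decomposition of $\S$ and then recognize each piece as a coset-translate of the basic interval $P_r = \big(h(-1),1\big)$. First I would fix the hypothesis $h(1)<\zeta_k$, which says the interval $\big(1,h(k)\big)$ has length $h(k) < \zeta_k = e(1/k)$; equivalently, the $k$ points $1, h(-1), h(-2), \dots, h(-(k-1))$ together with the open intervals between consecutive ones tile $\big(h(-(k-1)-1), 1\big] = \big(h(-k), 1\big]$ — wait, more carefully: since $h(-i-1) < h(-i)$ for each $0 \le i \le k-1$ (because $h(1) < \zeta_k$ forces these points to be in counter-clockwise order with total span $< 1$), the set
\[
\bigcup_{i=0}^{k-1}\big(h(-i-1),h(-i)\big) \cup \{h(-1),\dots,h(-(k-1))\}
\]
is exactly the half-open interval $\big(h(-k),1\big)$ minus possibly an endpoint — I would nail down that it equals $\big(h(-k),1\big)$ as an orientation interval, using that consecutive intervals $\big(h(-i-1),h(-i)\big)$ and $\big(h(-i-2),h(-i-1)\big)$ share the endpoint $h(-i-1)$, which is included in the union.

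Next I would apply the orientation-reversing identity $\Cal O(\alpha,\beta,\gamma) \Leftrightarrow \Cal O(\gamma^{-1},\beta^{-1},\alpha^{-1})$ recorded in Section~\ref{circle_section}, which gives $\alpha \in (\sigma,\tau) \iff \alpha^{-1} \in (\tau^{-1},\sigma^{-1})$. Applying this to $\sigma = 1$, $\tau = h(k)$: we get $\alpha \in \big(1,h(k)\big) \iff \alpha^{-1} \in \big(h(-k),1\big)$. Combining with the tiling identity from the previous paragraph, $\alpha^{-1} \in \big(h(-k),1\big)$ holds if and only if $\alpha^{-1}$ lies in one of the intervals $\big(h(-i-1),h(-i)\big) = h(i)P_r$ for $i = 0,\dots,k-1$, or equals one of the junction points $h(-1),\dots,h(-(k-1))$. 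That is precisely the right-hand side of the claimed equivalence.

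The one genuinely substantive step — the rest being bookkeeping with the orientation relation — is verifying that the $k$ intervals $\big(h(-i-1),h(-i)\big)$ for $i=0,\dots,k-1$ are pairwise disjoint, correctly ordered, and exhaust $\big(h(-k),1\big)$; this is exactly where the hypothesis $h(1)<\zeta_k$ is used, since it guarantees each step $h(-i-1) \to h(-i)$ advances by less than $1/k$ of the circle and the $k$ steps do not wrap around. I would prove this by writing $h(1) = e(\theta)$ with $0 < \theta < 1/k$, so that $h(-i) = e(-i\theta) = e(1 - i\theta)$ for $i = 1,\dots,k-1$ with the representatives $1 - (k-1)\theta < 1 - (k-2)\theta < \dots < 1-\theta < 1$ all lying in a sub-arc of length $(k-1)\theta < (k-1)/k < 1$, making the ordering and disjointness immediate, and then invoke that the closure of the union of these half-open pieces is the closed interval $\big[h(-k),1\big]$. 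The lemma then follows by assembling these observations.
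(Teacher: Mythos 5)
Your proof is correct and takes essentially the same route as the paper's: both decompose a length-$h(k)$ arc into the $k$ consecutive translates of the fundamental interval together with the $k-1$ junction points, using $h(1)<\zeta_k$ to rule out wrap-around. The only (immaterial) difference is that you apply the inversion $\alpha\mapsto\alpha^{-1}$ before decomposing, whereas the paper decomposes $\big(1,h(k)\big)$ first and inverts at the last step.
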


\begin{proof}
The assumption $h(1)<\zeta_k$ gives $h(i-1)<h(i)$ for every $i\in[k]$. So we have the decomposition 
 \begin{align*}
  \big(1,h(k)\big)&=\big(1,h(1)\big]\cup\big(h(1),h(2)\big]\cup\cdots\cup\big(h(k-1),h(k)\big)\\
             &=\big(1,h(1)\big)\cup\cdots\cup\big(h(k-1),h(k)\big)\cup\{h(1),\dots,h(k-1)\}\\
             &=\bigcup_{i=0}^{k-1}h(i)\big(1,h(1))\cup\{h(1),\dots,h(k-1)\}
 \end{align*}
This finishes the proof, since $\beta\in\big(1,h(1))$ if and only if $\beta^{-1}\in P_r$.
\end{proof}

\section{Expanding the Group of Integers}\label{back_and_forth_section}
We would like to consider the model theoretic structure obtained by expanding the abelian group of integers by  the subset $P_r$. We have seen above that $\Z$ is isomorphic as an abelian group with a subgroup $\Gamma_r$ of $\S$ that happens to be dense in $\S$. The work in the previous section was mostly done in $\Gamma_r$, but it is straightforward to pull those results back to $\Z$ via the map $h$.

\medskip\noindent
Let $L:=\{+,-,0,c\}$ be the language of abelian groups with a distinguished element $c$.  Let $T$ be the theory of the $L$-structure $(\Z,+,-,0,1)$.
 

\medskip\noindent
We extend $L$ to $L_P:=L\cup\{P\}$ where $P$ is a unary relation symbol. Our main objective is to study the $L_P$-structure 
 \[
  \mathfrak Z:=\Big(\Z,+,-,0,1,P_r\Big).
 \] 

\medskip\noindent
For $\vec k=(k_1,\dots,k_n)\in\Z^n$ and $I\subseteq[n]$, we define $\phi_{\vec k,I}(x_1,\dots,x_n)$ to be the following $L_P$-formula:
 \[
 \exists y\left(\bigwedge_{i\in I}x_i+k_iy\in P\wedge\bigwedge_{j\notin I}x_j+k_jy\notin P\right).
 \]
Therefore for $(a_1,\dots,a_n)\in\Z^n$, we have  $\mathfrak Z\models\phi_{\vec k,I}(a_1,\dots,a_n)$
 if and only if $(a_1,\dots,a_n)$ realizes the $(\vec k,I)$-pattern. 
Then using Proposition~\ref{patternQE} and Lemma~\ref{cosets}, there is a quantifier-free $L_P$-formula $\psi_{\vec k,I}(x_1,\dots,x_n)$ such that
 \[
  \mathfrak Z\models\forall x_1\cdots\forall x_n\big(\phi_{\vec k,I}(x_1,\dots,x_n)\leftrightarrow \psi_{\vec k,I}(x_1,\dots,x_n)\big)
 \]

\medskip\noindent
Let $T_r$ be the $L_P$-theory extending $T$ by the condition above; namely for every $\vec k=(k_1,\dots,k_n)\in\Z^n$ and $I\subseteq[n]$, we add the following sentence as an axiom:
 \[
 \forall x_1\cdots\forall x_n\left(\phi_{\vec k,I}(x_1,\dots,x_n)\leftrightarrow \psi_{\vec k,I}(x_1,\dots,x_n)\right)
 \]

\medskip\noindent
We shall construct a back-and-forth system between certain substructures of models of $T_r$. 

\medskip\noindent
Let $\Cal M$ and $\Cal N$ be $\aleph_0$-saturated models of $T_r$. Let $S_\Cal M$ be the collection of countable $L_P$-substructures $\Cal M'$ of $\Cal M$ such that $M'$ is a pure subgroup of $M$.  We define $S_\Cal N$ in a similar way. 

 \medskip\noindent
 Note that the group $\Z$ has a copy in each member of $S_\Cal M$ and $S_\Cal N$ as the subgroup generated by the constant $c$. However, those copies may not be isomorphic as $L_P$-substructures of $\Cal M$ and $\Cal N$. 

\medskip\noindent
Let $\mathfrak B(\Cal M,\Cal N)$ be the collection of $L_P$-isomorphisms $f:\Cal M'\to\Cal N'$, where $\Cal M'\in S_\Cal M$ and $\Cal N'\in S_\Cal N$.

\begin{proposition}
For $\aleph_0$-saturated models $\Cal M$ and $\Cal N$  of $T_r$, the collection $\mathfrak B(\Cal M,\Cal N)$ is a back-and-forth system.
\end{proposition}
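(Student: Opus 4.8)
The plan is to verify the two defining conditions of a back-and-forth system: (i) every $f\in\mathfrak B(\Cal M,\Cal N)$ has domain and codomain in $S_\Cal M$, $S_\Cal N$ respectively and is an $L_P$-isomorphism (which holds by construction), together with the nonemptiness of $\mathfrak B(\Cal M,\Cal N)$; and (ii) the \emph{forth} property: given $f\colon\Cal M'\to\Cal N'$ in $\mathfrak B(\Cal M,\Cal N)$ and any $a\in M$, there is an extension $g\in\mathfrak B(\Cal M,\Cal N)$ with $a$ in the domain of $g$. The \emph{back} property then follows by symmetry. For nonemptiness, note that the subgroup $\langle c\rangle$ generated by the constant inside $\Cal M$ is a pure subgroup (it is the solution set of divisibility conditions inherited from $T\models \text{Pr\"ufer}$, i.e.\ $\Z$ is pure in any model of $T$), carries the induced $L_P$-structure, and $\text{axiom }(3)$-type information (the value of $P$ on $\langle c\rangle$) is the same in $\Cal M$ and $\Cal N$ since both are models of $T_r$; hence $\langle c\rangle_{\Cal M}\cong\langle c\rangle_{\Cal N}$ as $L_P$-structures, giving an element of $\mathfrak B(\Cal M,\Cal N)$.

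For the forth step, fix $f\colon\Cal M'\to\Cal N'$ and $a\in M\setminus M'$. First enlarge $M'$ to the purification $M''$ of $M'+\Z a$ inside $M$: since $M'$ is a pure countable subgroup and we adjoin one element, $M''$ is again a countable pure subgroup, and by standard arguments on pure subgroups of models of $T$ (the theory of $\Z$ as an abelian group, equivalently a Pr\"ufer/$\Z$-group), $M''$ is generated over $M'$ by a coherent system $\{a_k\}$ where $a_k$ is chosen so that $k!\mid a - (\text{something in }M')$ appropriately, i.e.\ we control all the congruence data. The heart of the matter is to find the image $b\in N$ of $a$: we must choose $b$ so that for every tuple $\vec m\in M'$, every $\vec k\in\Z^n$, and every choice of which linear combinations $m_i + k_i a$ lie in $P^{\Cal M}$, the corresponding combinations $f(m_i) + k_i b$ lie in $P^{\Cal N}$ with the same truth values — and likewise for all the divisibility predicates $D_{m,\pm}$, which are built from $P$ and $+$. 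By $\aleph_0$-saturation of $\Cal N$ it suffices to show this type over $N'$ is finitely satisfiable, i.e.\ that any finite conjunction of the required conditions is consistent.

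The key tool here is Proposition~\ref{patternQE} (together with Lemma~\ref{cosets} and axiom~(2) of $T_r$). A finite fragment of the type to be realized by $b$ specifies, for finitely many $(\vec k, I)$ and finitely many tuples of parameters, that $b$ witnesses the $(\vec k,I)$-pattern for those parameters (or its negation). By the axioms of $T_r$ — precisely the sentences $(\star)$, i.e.\ condition~(2) — the existence of such a witness in $\Cal M$ (which we have, namely $a$) is equivalent to the quantifier-free $L_P$-condition $\psi_{\vec k,I}(\vec m)$ holding in $\Cal M'$ (as $\Cal M'$ is an $L_P$-substructure and $\psi$ is quantifier-free, this is absolute). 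Applying the isomorphism $f$, the condition $\psi_{\vec k,I}(f(\vec m))$ holds in $\Cal N'$, hence in $\Cal N$, and running the axiom $(\star)$ backwards in $\Cal N$ produces a witness $b\in N$ for that finite fragment. The quantifier-free formulas $\psi$ refer only to $+,-,0,1$ and $P$ (and thus indirectly the $D_{m,\pm}$), so they are genuinely preserved by the $L_P$-isomorphism $f$. By $\aleph_0$-saturation the full type is realized, and one checks the resulting map $g\colon M''\to N''$ (with $N''$ the purification of $N'+\Z b$) is an $L_P$-embedding onto its image with image in $S_\Cal N$.

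\textbf{Main obstacle.} The delicate point is \emph{bookkeeping across the divisibility predicates}: one must ensure that realizing the $P$-pattern conditions automatically handles all the $D_{m,\pm}$ conditions and all congruence conditions $M''$ imposes, so that $g$ really is an $L_P$-isomorphism of pure subgroups and not merely a map respecting $P$ on the nose. This is where the precise shape of the axioms matters — the formulas $\phi_{\vec k,I}$ range over \emph{all} $\vec k\in\Z^n$ and \emph{all} $I$, which is exactly enough to encode membership of arbitrary linear combinations (hence $D_{m,\pm}$-membership, since $x\in D_{m,+}$ says $x=my$ for $y\in P$, captured by patterns with the relevant $k_i$). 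Verifying that the purification step is compatible — that the coherent choice of $k$-divisors of $b$ can be made simultaneously with realizing the $P$-type — is the real content; I expect it to reduce, after unwinding, to the observation that divisibility data is part of the quantifier-free type in the pure-subgroup sense and $f$ preserves it by hypothesis, so no genuine conflict arises, but writing this cleanly is the crux.
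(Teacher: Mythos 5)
Your forth argument is in essence the paper's: reduce by $\aleph_0$-saturation to a finite fragment of the type of $\beta$ over $N'$, observe that the fragment amounts to a pattern condition $\phi_{\vec k,I}$ with parameters in $M'$, and transfer its truth through the quantifier-free equivalent $\psi_{\vec k,I}$ supplied by the axioms $(\star)$, which is preserved by the $L_P$-isomorphism $f$. Two points, however, need correction or completion.

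First, your nonemptiness claim is false. The theory $T_r$ consists of $T$ together with the sentences $(\star)$; it does \emph{not} decide whether a given standard integer $k=1+\cdots+1$ lies in $P$. Two models of $T_r$ can therefore disagree about $P$ on the subgroup generated by the constant $c$, so the two copies of $\langle c\rangle$ need not be $L_P$-isomorphic. The paper makes exactly this point after Theorem~\ref{QE}: $\mathfrak B(\Cal M,\Cal N)$ may well be empty for models of $T_r$, and the $\Z$-axioms of $T_{r}^*$ are introduced precisely to repair this. So the proposition is to be read with a convention allowing an empty back-and-forth system, and the element of $\mathfrak B(\Cal M,\Cal N)$ you construct does not exist in general.

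Second, the step you flag as ``the crux'' --- making the purification of $M'+\Z\alpha$ compatible with realizing the $P$-type --- is closed in the paper by a short explicit maneuver rather than an appeal to quantifier-free types. A finite fragment of the condition on $\beta$ involves equations $a_i+k_i\alpha=n_i\gamma_i$ with $a_i\in M'$. Set $\nu=\lcm(n_1,\dots,n_m)$ and write $\alpha=d+\nu\alpha'$ with $0\le d<\nu$. Then $\gamma_i=a_i'+k_i'\alpha'$, where $n_ia_i'=a_i+k_id$ and $k_i'=k_i\nu/n_i$, and purity of $M'$ in $M$ puts each $a_i'$ in $M'$. The whole fragment thus collapses to a \emph{single} pattern statement $\phi_{\vec k',I}(a_1',\dots,a_m')$ about the one new element $\alpha'$, to which one application of $(\star)$ and of $f$ applies; the resulting witness $\beta'$ in $N$ gives $\beta=d+\nu\beta'$ and $\delta_i=f(a_i')+k_i'\beta'$. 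Note also that no separate bookkeeping for the predicates $D_{m,\pm}$ is required at this stage: the members of $\mathfrak B(\Cal M,\Cal N)$ are $L_P$-isomorphisms between pure subgroups, and the divisibility predicates enter only later, in the quantifier elimination for $L_\pm$.
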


\begin{proof}
 Let $f:\Cal M'\to\Cal N'$ be in $\mathfrak B(\Cal M,\Cal N)$ and $\alpha\in M\setminus M'$. By symmetry, it suffices to extend $f$ to an element of $\mathfrak B(\Cal M,\Cal N)$ that contains $\alpha$ in its domain. 
 
 \medskip\noindent
 Let  $M''$ be the pure subgroup of $M$ generated by $M'$ and $\alpha$; namely: 
  \[
   M''=\<M'\cup\{\alpha\}\>_M:=\{\gamma\in M:m\gamma\in M'\oplus\Z\alpha\text{ for some }m>0\}.
  \]
 Also let $\Cal M''$ be the $L_P$-substructure of $\Cal M$ with the underlying set $M''$. We would like to extend $f$ to $\Cal M''$. That amounts to finding $\beta\in N$ with the following property: 

\medskip
$(*)$ For every $a\in M'$, $k\in\Z$, $n>0$, and $\gamma\in M$ if $a+k\alpha=n\gamma$, then

there is $\delta\in N$ such that $f(a)+k\beta=n\delta$, and 
   \[
   \gamma\in P \Longleftrightarrow \delta\in P.
   \]

\medskip\noindent
This condition without the last part just means that $\<M' \cup\{\alpha\}\>_M$ and $\<N'\cup\{\beta\}\>_N$ are isomorphic as groups. Since the reducts of $\Cal M$ and $\Cal N$ to $L$ are models of $T$, there is certainly such an element $\beta$ in $N$. So the point is to find $\beta$ in a way that that isomorphism of groups is indeed an $L_P$-isomorphism.

\medskip\noindent
By saturation, it suffices to find $\beta\in N$ satisfying a given finite fragment of $(*)$. So let $a_1,\dots,a_m\in M'$, $k_1,\dots,k_m\in\Z$, $n_1,\dots,n_m\in\N_+$, and $\gamma_1,\dots,\gamma_m\in M$ be such that $a_i+k_i\alpha=n_i\gamma_i$ for each $i$. Then we need to find $\beta,\delta_1,\dots,\delta_m\in N$ such that $f(a_i)+k_i\beta=n_i\delta_i$ and $\delta_i\in P$ if and only if $\gamma_i\in P$ for every $i$.

\medskip\noindent
Let $\nu=\lcm(n_1,\dots,n_m)$, and let $d\in\{0,\dots,\nu-1\}$ and $\alpha'\in M$ be such that $\alpha=d+\nu\alpha'$. Then $\gamma_i=a_i'+k_i'\alpha'$, where $n_i a_i'=a_i+k_id$ and $k_i'=\frac{k\nu}{n_i}$. Since $M'$ is pure in $M$, it contains $a_i'$. Therefore it suffices to find $\beta'\in N$ such that for every $i$:
 \[
 a_i'+k_i'\alpha'\in P \Longleftrightarrow f(a_i')+k_i'\beta'\in P
 \]
Taking $\vec{k}=(k_1',\dots,k_n')$ and $I=\{i:a_i'+k_i'\alpha'\in P\}$, we have 
 \[
 \Cal M\models \phi_{\vec{k},I}(a_1',\dots,a_n').
 \]
Hence 
\[
 \Cal M\models \psi_{\vec{k},I}(a_1',\dots,a_n') \text{ and }\Cal M'\models \psi_{\vec{k},I}(a_1',\dots,a_n').
 \]
Thus
\[
 \Cal N'\models \psi_{\vec{k},I}\left(f(a_1'),\dots,f(a_n')\right)\text{ and }\Cal N\models \psi_{\vec{k},I}\left(f(a_1'),\dots,f(a_n')\right).
 \] 
As a result $\Cal N\models \phi_{\vec{k},I}(f(a_1'),\dots,f(a_n'))$ and hence there is $\beta'\in N$ with the desired property:
\[
f(a_i')+k_i'\beta'\in P\Longleftrightarrow i\in I\Longleftrightarrow a_i'+k_i'\alpha'\in P.
 \]
\end{proof}

\subsection{Quantifier Elimination and Axiomatization}
The theory $T_r$ does not have quantifier elimination for the obvious reason that for any $n>1$, the definable subgroup consisting of elements divisible by $n$ is not quantifier-free definable. However, we still do not get quantifier elimination after adding predicate symbols to represent those subgroups, because we also need to know whether the element obtained by dividing by $n$ is in $P$ or not. So for every $n\geq 1$ we add two new unary predicate symbols $D_{n,+}$ and $D_{n,-}$ to the language $L_P$ to obtain $L_\pm$ and let $T_\pm$ be the definitional extension of $T_r$ to an $L_\pm$-theory by adding the following for each $n\geq 1$:
 \[
 \forall x\big(D_{n,+}(x)\leftrightarrow\exists y(x=ny\wedge y\in P)\big)
 \]
 \[
 \forall x\big(D_{n,-}(x)\leftrightarrow\exists y(x=ny\wedge y\notin P)\big)
 \]
Therefore, every model $\Cal M$ of $T_r$ expands to a model of $T_\pm$; we still denote this extension by $\Cal M$. Note that for a model $\Cal M$ of $T_\pm$, we have $D_{n,+}(\Cal M)\cup D_{n,-}(\Cal M)=n M$  for every $n\geq 1$ and $D_{1,+}(\Cal M)=P(\Cal M)$.

\medskip\noindent
Now we are ready to prove Theorem~\ref{th_2} in a  stronger form.

\begin{theorem}\label{QE}
The theory $T_\pm$ has quantifier elimination.
\end{theorem}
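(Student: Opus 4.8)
The standard route to quantifier elimination is to verify the Blum-style back-and-forth criterion: $T_\pm$ eliminates quantifiers if, for any two models that are $\aleph_0$-saturated (and it suffices to work with $\aleph_0$-saturated ones), every isomorphism between finitely generated $L_\pm$-substructures extends one element at a time. So I would take $\Cal M$ and $\Cal N$ to be $\aleph_0$-saturated models of $T_\pm$, let $f\colon \Cal A\to \Cal B$ be an $L_\pm$-isomorphism between finitely generated $L_\pm$-substructures, and fix $\alpha\in M$; the goal is to extend $f$ so that $\alpha$ enters the domain. The main point is to connect this to the back-and-forth system $\mathfrak B(\Cal M,\Cal N)$ already constructed in the previous Proposition, which works with \emph{pure} countable substructures.

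\textbf{Key steps.} First I would pin down what an $L_\pm$-substructure looks like. Since $L_\pm$ has only the group operations, constants, and the unary predicates $D_{n,\pm}$, a finitely generated $L_\pm$-substructure $\Cal A$ has underlying set a finitely generated subgroup of $M$ containing $c$; the predicates are just traces. The crucial observation is that the divisibility predicates let us \emph{detect purity inside $\Cal A$ itself}: because $D_{n,+}(\Cal M)\cup D_{n,-}(\Cal M)=nM$, the quantifier-free $L_\pm$-type of a tuple records, for each $n$, which $\Z$-linear combinations of the generators lie in $nM$ and, when they do, whether the $n$-th part is in $P$ — equivalently it records the image of the tuple in $M/nM$ together with the $P$-pattern of the divided elements. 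Second, I would pass from $\Cal A$ to its pure closure $\bar{\Cal A}:=\langle A\rangle_M$ inside $M$, with the induced $L_\pm$-structure, and similarly $\bar{\Cal B}$. The content is that $f$ extends \emph{canonically} to an $L_\pm$-isomorphism $\bar f\colon \bar{\Cal A}\to\bar{\Cal B}$: an element $\gamma\in\bar{\Cal A}$ satisfies $m\gamma=a$ for some $a\in A$, $m>0$, so $\bar f(\gamma)$ must be the unique solution of $m\,z=f(a)$ in the divisible-hull sense — uniqueness up to torsion, but models of $T$ are torsion-free, and the predicate value $\gamma\in P$ is forced because $a\in D_{m,+}(\Cal M)$ or $D_{m,-}(\Cal M)$ and $f$ respects these predicates, so it transfers to $\bar{\Cal B}$ and pins down which coset of $P$ the element $\bar f(\gamma)$ lies in; one checks this is well-defined and an $L_\pm$-isomorphism. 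Now $\bar f$ is (after taking a countable elementary-ish piece, or just noting $\bar{\Cal A},\bar{\Cal B}$ are countable since $A,B$ are finitely generated) a member of $\mathfrak B(\Cal M,\Cal N)$. Third, apply the back-and-forth property of $\mathfrak B(\Cal M,\Cal N)$ to the element $\alpha$: we obtain an extension of $\bar f$ whose domain contains $\alpha$ and which is an $L_P$-isomorphism between pure substructures, hence (since the $D_{n,\pm}$ are definable over $L_P$ in models of $T_\pm$) an $L_\pm$-isomorphism. Restricting this to the $L_\pm$-substructure generated by $A\cup\{\alpha\}$ gives the required extension of $f$. This establishes the back-and-forth criterion and hence quantifier elimination.

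\textbf{Main obstacle.} The routine parts are the shape of $L_\pm$-substructures and the bookkeeping with $M/nM$. The delicate step is the canonical extension $f\mapsto\bar f$ to pure closures as an $L_\pm$-isomorphism, and in particular checking it is well-defined on the level of the predicates: one must verify that the value of $D_{n,\pm}$ on every element of $\bar{\Cal A}$ — not just on $A$ — is determined by the $L_\pm$-structure of $\Cal A$, so that there is no choice to be made and $\bar f$ genuinely respects all $D_{n,\pm}$. Concretely, if $\gamma\in\bar{\Cal A}$ with $m\gamma = a\in A$, then for any $n$ one asks whether $\gamma\in nM$; this happens iff $a\in mnM$ with the $mn$-quotient equal (mod torsion) to the $m$-quotient's $n$-divisor, a condition visible in $\Cal A$, and the $P$-status of $\gamma/n$ is then read from $D_{mn,\pm}(a)$. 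Getting this compatibility exactly right — reconciling the various divisibility predicates on overlapping elements — is where the argument must be careful; once it is in place, the reduction to the already-established system $\mathfrak B(\Cal M,\Cal N)$ does the rest. (Alternatively, one can avoid pure closures and instead argue directly that a single-generator extension of an arbitrary finitely generated $L_\pm$-substructure can be realized, by first adjoining all needed $n$-th parts of existing elements — a finite process since only finitely many $n$ occur in a given quantifier-free formula — thereby landing inside a pure-enough substructure where the previous Proposition applies; this is the same idea organized slightly differently.)
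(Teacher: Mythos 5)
Your proposal is correct and follows essentially the same route as the paper: pass to the pure closures $\langle A\rangle_M$ and $\langle A\rangle_N$, use the predicates $D_{n,\pm}$ (together with torsion-freeness, so the $n$-th part is unique) to see that the isomorphism on $A$ extends canonically and respects $P$ on the divided elements, observe that the resulting map between countable pure substructures lies in $\mathfrak B(\Cal M,\Cal N)$, and conclude by the back-and-forth system. The only cosmetic difference is that the paper invokes the ``common finitely generated substructure implies $\Cal M\equiv_A\Cal N$'' form of the QE test rather than the one-element-extension form, but both reduce to the same Proposition.
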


\begin{proof}
It suffices to prove the following: 
 
 $(\dagger)$ Let $\Cal M$ and $\Cal N$ be models of $T_\pm$ and $\Cal A$ a common finitely generated 
 
 $L_\pm$-substructure of $\Cal M$ and $\Cal N$.  Then $\Cal M\equiv_A\Cal N$. (This means that 
 
 $\Cal M$ and $\Cal N$ are elementarily equivalent as $L_\pm(A)$-structures )
 
 \smallskip\noindent
 (For why this is enough, see, for instance, Proposition 18.2 of \cite{Kirby_book}.)

\medskip\noindent
We may assume that $\Cal M$ and $\Cal N$ are  $\aleph_0$-saturated. Let
 \[
 M'=\<A\>_M \text{ and } N'=\<A\>_N.
 \] 
Clearly, $M'$ and $N'$ are isomorphic as abelian groups via a map extending the identity map on $A$. If $n\alpha=a$ for some $a\in A$ and $\alpha\in M$, then there is $\beta\in N$ with $n\beta=a$. Then the isomorphism sends $\alpha$ to $\beta$. But we also have $\alpha\in P(\Cal M)$ if and only if $\beta\in P(\Cal N)$, since either both $\Cal M$ and $\Cal N$ satisfy  $D_{n,+}(a)$ or they both satisfy $D_{n,-}(a)$. Therefore $M'$ and $N'$ are underlying sets of $L_\pm$-substructures $\Cal M'$ and $\Cal N'$ of $\Cal M$ and $\Cal N$ respectively, and they are isomorphic. Since $M'$ and $N'$ are countable and pure in $M$ and $N$,  that isomorphism is in $\mathfrak B(\Cal M,\Cal N)$. It follows that $\Cal M\equiv_{M'}\Cal N$ and in particular $\Cal M\equiv_{A}\Cal N$.
\end{proof}

\medskip\noindent
Given $\aleph_0$-saturated models $\Cal M$ and $\Cal N$ of $T_r$, we may still have that $\mathfrak{B}(\Cal M,\Cal N)=\emptyset$. So in order to get completeness  we extend $T_\pm$ to $T_{r}^*$ by adding \emph{$\Z$-axioms}: Given $k\in\N_+$ if $k\in P_r$, then we add the axiom $k\in P$, otherwise we add the axiom $k\notin P$. (Recall that $1$ is in the language, so $k=1+\dots+1$.) Clearly, $T_{r}^*$ still has quantifier elimination.

\medskip\noindent
With this extension, we get Theorem~\ref{th_3}.

\begin{theorem}
The theory $T_{r}^*$ is complete.
\end{theorem}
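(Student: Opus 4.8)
The plan is to show that $T_r^*$ is complete by exhibiting, for any two models, a common substructure to which the back-and-forth system $\mathfrak B$ applies, and then invoking the quantifier-elimination machinery already established. Since $T_r^*$ still has quantifier elimination (being an extension of $T_\pm$ by the quantifier-free $\Z$-axioms), it suffices to produce a single $L_\pm$-structure that embeds into every model of $T_r^*$; completeness then follows because any two models become elementarily equivalent over this common substructure via a map in $\mathfrak B$, exactly as in the proof of Theorem~\ref{QE}.

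The natural candidate for this common substructure is the prime model, namely the image of $\Z$ under the constant $c$. Concretely, let $\Cal M \models T_r^*$ and consider the subgroup of $M$ generated by $1 = c$; call it $\Z_{\Cal M}$. The $\Z$-axioms guarantee that for each $k \in \N_+$ we have $k \in P(\Cal M)$ if and only if $k \in P_r$, and that $k \notin P(\Cal M)$ otherwise; the same holds for negative multiples of $c$ and for $0$ (one checks $0 \notin P_r$ and handles $-k$ via Lemma~\ref{negations} or directly from an appropriate $\Z$-axiom, noting $P$ is quantifier-free and the axioms pin it down on all of $\Z_{\Cal M}$). Hence the map sending the copy of $n$ in $\Cal M$ to the copy of $n$ in $\mathfrak Z$ is an isomorphism of $L_P$-structures between $\Z_{\Cal M}$ and $(\Z,+,-,0,1,P_r)$; extending to $L_\pm$ is automatic since the $D_{n,\pm}$ are definitionally determined and we have agreement on $P$. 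Thus every model of $T_r^*$ contains an isomorphic copy of the fixed $L_\pm$-structure $\mathfrak Z$ as a substructure.

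Now the completion step. Given two $\aleph_0$-saturated models $\Cal M, \Cal N \models T_r^*$, identify their copies of $\mathfrak Z$ via the isomorphism above, obtaining a common $L_\pm$-substructure $\Cal A \cong \mathfrak Z$. This $\Cal A$ is finitely generated (by $c$) as a group-with-predicates in the sense used in Theorem~\ref{QE}, but more importantly its underlying group $\Z$ is pure in any model of $T$ (by $(M,+,-,0,1) \equiv (\Z,+,-,0,1)$), so the identity-like isomorphism $\Cal A \to \Cal A$ lies in $\mathfrak B(\Cal M, \Cal N)$. By the back-and-forth property this extends to a partial elementary correspondence witnessing $\Cal M \equiv_A \Cal N$; in particular $\Cal M \equiv \Cal N$. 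Since any two models have elementarily equivalent $\aleph_0$-saturated elementary extensions, $T_r^*$ is complete.

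The only real point requiring care is verifying that the $\Z$-axioms, which are stated only for $k \in \N_+$, actually determine membership in $P$ for \emph{all} of $\Z_{\Cal M}$ — that is, that purity of $\Z_{\Cal M}$ plus these axioms forces the copy of $\Z$ inside $\Cal M$ to be $L_P$-isomorphic to $\mathfrak Z$ rather than merely to agree with it on positive integers. This is where one uses that $P$ is part of the quantifier-free language and that, in $\mathfrak Z$, the relation $-m \in P_r \Leftrightarrow m-1 \in P_r^+$ (recorded after Lemma~\ref{sturm-vs-beatty1}) lets one reduce the negative and zero cases to positive ones; alternatively one simply includes the $\Z$-axioms for all $k \in \Z$, which is harmless. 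Everything else is a direct application of Theorem~\ref{QE} and the back-and-forth proposition, so I expect no substantive obstacle.
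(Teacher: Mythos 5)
Your proof is correct and is essentially the paper's own argument: the paper simply observes that $\mathfrak Z$ is an algebraically prime model of $T_{r}^*$ and that quantifier elimination then yields completeness, which is exactly the prime-substructure-plus-QE route you spell out via the back-and-forth system. Your side remark that the $\Z$-axioms must pin down $P$ on all of $\Z$ (not just $\N_+$) is a fair reading of the paper's intent, since Theorem~\ref{th_3}(3) already states that condition for every $k\in\Z$.
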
  

\begin{proof}
Clearly, $\mathfrak Z$ is an algebraically prime model of $T_{r}^*$ . Since $T_{r}^*$ has quantifier elimination, we get that $T_{r}^*$ is complete.
\end{proof}

\medskip\noindent
{\it Question.} According to this theorem each $T_{r}^*$ is a completion of $T_r$. Is it correct that each completion of $T_r$ is given as the theory of an expansion of the group of integers by a Beatty Sequence?

\section{Definable Sets}\label{definable_sets_section}

When dealing with the definable sets,  we constantly switch between $\mathfrak Z$ and the isomorphic structure
\[
 \mathfrak G:=\big(\Gamma_r,\cdot,^{-1},1,h(1),P_r\big),
 \]
where $P_r$ denotes the interval $\big(h(-1),1\big)\cap\Gamma_r$ of $\Gamma_r$. 

\medskip\noindent
For a subset $X$ of $\Gamma_r^n$ and $m>0$ we let
 \[
 X^{(m)}:=\big\{(x_1^m,\dots,x_n^m):\vec x\in X\big\},\text{ and}\]
\[X^{1/m}:=\{\vec y\in\Gamma_r^n:(y_1^m,\dots,y_n^m)\in X\}.
 \]
 
 \medskip\noindent
For a subset $A$ of $\S$, we define $[A]$ to be the subgroup of $\S$ generated by $A$ and 
 \[
 \<A\>:=\left\{\alpha\in\S:\alpha^m\in[A]\text{ for some }m>0\right\}.
 \]
 
\medskip \noindent
 We collect some easy facts about these notions.
 
 \begin{lemma}\label{union_of_powers}
 Let $X,Y\subseteq\Gamma_r^n$ and $m>0$. 
  \begin{enumerate}
  \item $D_{m,+}^{1/m}=P_r$ and $P_r^{(m)}=D_{m,+}$.
  \item $(X^{(m)})^{1/m}=X$ and $(X^{1/m})^{(m)}=X\cap(\Gamma_r^n)^{(m)}$
   \item $(X\cap Y)^{1/m}=X^{1/m}\cap Y^{1/m}$, $(X\cup Y)^{1/m}=X^{1/m}\cup Y^{1/m}$, and $(\Gamma_r^n\setminus X)^{1/m}=\Gamma_r^n\setminus X^{1/m}$.
   \item For $\vec a\in(\{0,\dots,m-1\})^n$, let $X_{\vec a}=h(\vec a)X\cap(\Gamma_r^n)^{(m)}$. Then 
    \[
    X=\bigcup_{\vec a}h(-\vec a)X_{\vec a}.
    \]
  \end{enumerate}
 \end{lemma}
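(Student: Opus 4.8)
\textbf{Proof proposal for Lemma~\ref{union_of_powers}.}

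The plan is to verify each of the four items by unwinding the definitions of $X^{(m)}$, $X^{1/m}$, $[A]$ and $\<A\>$, together with the identity $P_r^{(m)} = D_{m,+}$, which is really just a restatement of the definition of $D_{m,+}^\Z$ pulled back through $h$. For (1), I would argue directly: $D_{m,+} = \{x \in \Gamma_r : x = y^m \text{ for some } y \in P_r\} = P_r^{(m)}$ by the very definition of $D_{m,+}$ in $\mathfrak{G}$; taking $1/m$ on both sides, $D_{m,+}^{1/m} = \{y : y^m \in D_{m,+}\}$. The inclusion $P_r \subseteq D_{m,+}^{1/m}$ is immediate. For the reverse inclusion one needs that if $y^m \in P_r^{(m)}$ then already $y \in P_r$; this is exactly the content of Proposition~\ref{main_prop} / equation~(\ref{5}) applied with $a = 0$, $k = m$, $c$ the integer with $h(c) = y$, noting that $a + kc \in P_r$ iff $h(c) \in U_{0,m} = \bigcup_s (h(-1)^{1/m}\zeta_m^s, 1^{1/m}\zeta_m^s)$, but the point is that $U_{0,m}$ is precisely the preimage under $(\cdot)^m$ of $P_r = (h(-1),1)$, so $y^m \in P_r$ forces $y \in U_{0,m}$, and $U_{0,m}$ is by construction $\{y : y^m \in P_r\}$. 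Care is needed here only with the boundary behaviour (the half-open vs. open conventions), which is harmless since all the relevant sets are intervals with irrational endpoints so the endpoints are never in $\Gamma_r$.

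For (2), both identities are purely set-theoretic consequences of the definitions of the two operations: $(X^{(m)})^{1/m} = \{\vec y : (y_1^m,\dots,y_n^m) \in X^{(m)}\}$, and since $\vec y \mapsto (y_1^m,\dots,y_n^m)$ is a group homomorphism $\Gamma_r^n \to \Gamma_r^n$ with image $(\Gamma_r^n)^{(m)}$, the preimage of $X^{(m)}$ is $X$ exactly when this map is injective — but $\Gamma_r$ is torsion-free, so $x \mapsto x^m$ is injective on $\Gamma_r$, giving $(X^{(m)})^{1/m} = X$. Dually, $(X^{1/m})^{(m)} = \{(y_1^m,\dots,y_n^m) : \vec y \in \Gamma_r^n,\ (y_1^m,\dots,y_n^m) \in X\} = X \cap (\Gamma_r^n)^{(m)}$ directly. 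Item (3) is the trivial observation that $\vec y \mapsto (y_1^m,\dots,y_n^m)$ being a function, preimage commutes with $\cap$, $\cup$ and complement (the complement is taken within $\Gamma_r^n$, which matches).

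For (4), fix the homomorphism $\pi : \Gamma_r^n \to \Gamma_r^n$, $\pi(\vec y) = (y_1^m,\dots,y_n^m)$, whose image is $(\Gamma_r^n)^{(m)}$ and whose fibres are the cosets $h(-\vec a) \cdot \pi^{-1}(\text{pt})$ indexed by $\vec a \in \{0,\dots,m-1\}^n$, using that $m$-th roots in $\Gamma_r$ differ by $m$-th roots of unity $\zeta_m^{a_i} = h(-a_i)$ in the appropriate normalization. More precisely: given $\vec x \in X$, set $\vec\xi = \pi(\vec x) \in (\Gamma_r^n)^{(m)}$; then $\vec\xi \in X$? No — rather one writes $\vec x = h(-\vec a)\vec x'$ where $\vec a$ is chosen so that $\vec x' := h(\vec a)\vec x$ lies in the preimage of the "principal" $m$-th root, forcing $\pi(\vec x') \in (\Gamma_r^n)^{(m)}$, whence $\vec x' \in h(\vec a) X \cap (\Gamma_r^n)^{(m)} = X_{\vec a}$ and $\vec x = h(-\vec a)\vec x' \in h(-\vec a) X_{\vec a}$. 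Conversely each $h(-\vec a)X_{\vec a} \subseteq h(-\vec a) h(\vec a) X = X$. I expect this last bookkeeping — correctly matching the coset representatives $h(-\vec a)$ to the $m$-th roots of unity so that the union genuinely covers $X$ and each piece lands back in $X$ — to be the only place demanding attention; everything else is formal, and since the lemma is labelled as collecting "easy facts", a short proof along these lines, or even the parenthetical "we leave the verification to the reader", is appropriate.
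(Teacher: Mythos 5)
The paper states this lemma without proof (``We collect some easy facts\ldots''), so there is no official argument to compare against; your overall strategy --- treat everything as formal properties of the $m$-th power map $\pi(\vec y)=(y_1^m,\dots,y_n^m)$, using injectivity for (2), preimage--Boolean-operation compatibility for (3), and the coset decomposition of $\Gamma_r^{(m)}$ in $\Gamma_r$ for (4) --- is the right one and items (2), (3), (4) go through. But your justification of item (1) contains a genuine confusion. You reduce the reverse inclusion to ``if $y^m\in P_r^{(m)}$ then $y\in P_r$'' (correct), but then argue via Proposition~\ref{main_prop} that $U_{0,m}$ is the preimage of $P_r$ under $(\cdot)^m$. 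That is the preimage of the \emph{interval} $(h(-1),1)$, not of the \emph{image set} $P_r^{(m)}=\{z^m:z\in P_r\}$, and $U_{0,m}\cap\Gamma_r$ is a union of $m$ intervals that strictly contains $P_r$: it contains elements $y$ with $y^m\in(h(-1),1)$ but $y^m\notin P_r^{(m)}$ and $y\notin P_r$. If your argument were taken literally it would show $D_{m,+}^{1/m}=U_{0,m}\cap\Gamma_r\neq P_r$. The correct and much shorter route is the one you already use in item (2): $\Gamma_r\cong\Z$ is torsion-free, so $x\mapsto x^m$ is injective on $\Gamma_r$, hence $y^m=z^m$ with $z\in P_r$ forces $y=z$. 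In other words, (1) is just the definition $D_{m,+}=P_r^{(m)}$ together with item (2) applied to $X=P_r$. (Your side remark about ``irrational endpoints'' is also off: the endpoints $1$ and $h(-1)$ of $P_r$ do lie in $\Gamma_r$; they are excluded because the interval is open, not because they miss $\Gamma_r$.)

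A second, smaller slip is in (4): the identity $\zeta_m^{a_i}=h(-a_i)$ is false ($\zeta_m=e(1/m)$ is a root of unity while $h(-a_i)=e(-a_i/r)$ never is, $r$ being irrational), and roots of unity are irrelevant here since $m$-th roots \emph{within} $\Gamma_r$ are unique. What makes the union cover $X$ is simply that $h(0),h(1),\dots,h(m-1)$ form a complete set of coset representatives of $\Gamma_r^{(m)}$ in $\Gamma_r$ (because $h:\Z\to\Gamma_r$ is an isomorphism and $\Z/m\Z$ has representatives $0,\dots,m-1$): given $\vec x\in X$ with $x_i=h(n_i)$, choose $a_i\in\{0,\dots,m-1\}$ with $a_i\equiv -n_i\pmod m$, so that $h(\vec a)\vec x\in(\Gamma_r^n)^{(m)}$ and hence $\vec x\in h(-\vec a)X_{\vec a}$; the reverse inclusion $h(-\vec a)X_{\vec a}\subseteq X$ is as you say. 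With these two repairs the proof is complete and appropriately short.
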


\medskip\noindent
Before analyzing definable sets in detail, we would like to clarify the connection with the paper \cite{Tran-Walsberg_expansions_of_Z}.

\begin{lemma}\label{convex_lemma}
 Let $m>0$ and let $X=(\alpha,\beta)\cap\Gamma_r^{(m)}$. Then any set $(\alpha\gamma_1,\beta\gamma_2)\cap\Gamma_r^{(m)}$ with $\gamma_1,\gamma_2\in\Gamma_r^{(m)}$ is definable in $(\Gamma_r,\cdot,X)$.
 \end{lemma}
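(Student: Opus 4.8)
The plan is to show that $(\alpha\gamma_1,\beta\gamma_2)\cap\Gamma_r^{(m)}$ is obtained from $X=(\alpha,\beta)\cap\Gamma_r^{(m)}$ by a combination of translation by elements of $\Gamma_r^{(m)}$, finite Boolean operations, and intersections, all of which are visibly available in the structure $(\Gamma_r,\cdot,X)$. The elementary observation is that translation by $\gamma\in\Gamma_r^{(m)}$ is a definable bijection of $\Gamma_r$ that carries $\Gamma_r^{(m)}$ to itself (since $\Gamma_r^{(m)}$ is a subgroup and $\gamma\in\Gamma_r^{(m)}$), and carries the interval $(\alpha,\beta)$ to $(\alpha\gamma,\beta\gamma)$ by the translation-invariance of the orientation relation recalled in Section~\ref{circle_section}. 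Hence $\gamma X = (\alpha\gamma,\beta\gamma)\cap\Gamma_r^{(m)}$ is definable in $(\Gamma_r,\cdot,X)$ for every $\gamma\in\Gamma_r^{(m)}$.

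The main step is then to express the ``asymmetric'' interval $(\alpha\gamma_1,\beta\gamma_2)$ as a Boolean combination of translates of $(\alpha,\beta)$. First I would handle the case $\gamma_1=1$, i.e. produce $(\alpha,\beta\gamma_2)\cap\Gamma_r^{(m)}$: writing $\gamma_2 X = (\alpha\gamma_2,\beta\gamma_2)\cap\Gamma_r^{(m)}$, one checks using Proposition~\ref{intersection_of_intervals} (or directly from the definition of $\Cal O$) that, depending on the position of $\alpha\gamma_2$ relative to the interval $(\alpha,\beta)$, the set $(\alpha,\beta\gamma_2)$ is either $X\cup\{\alpha\gamma_2\}\cup(\gamma_2 X)$ (when the two intervals overlap suitably), or $X\setminus(\text{complement of }\gamma_2 X)$-type combinations, or the full circle minus finitely many points; in all cases it is a Boolean combination of $X$ and $\gamma_2 X$ together with finitely many singletons of the form $\{\delta\}$ with $\delta\in\Gamma_r^{(m)}$, and each such singleton is definable since $\delta X$ and $X$ share the endpoint structure allowing one to isolate $\delta$ as a definable point (alternatively, singletons in $\Gamma_r^{(m)}$ are $\emptyset$-definable because $\Gamma_r^{(m)} = X \cup (\text{complement})\cup\{\text{endpoints}\}$ is definable and any single element is named once we fix one interval). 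Symmetrically one adjusts the left endpoint by $\gamma_1$, either before or after, and composing the two adjustments yields $(\alpha\gamma_1,\beta\gamma_2)\cap\Gamma_r^{(m)}$.

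The point requiring the most care — the ``hard part'' — is the bookkeeping of the finitely many boundary points and the case division according to the relative orientation of $\alpha$, $\beta$, $\alpha\gamma_1$, $\beta\gamma_2$: one must ensure that in every configuration the singleton corrections needed are points of $\Gamma_r^{(m)}$ (so that they are definable in $(\Gamma_r,\cdot,X)$ and do not disturb the intersection with $\Gamma_r^{(m)}$), and that the interval one is building does not accidentally ``wrap around'' the circle in a way the naive Boolean formula misses. I would dispose of this by noting that $\Gamma_r^{(m)}$ is dense (Proposition~\ref{regularly_dense_prop} applied to the infinite group $\Gamma_r^{(m)}$), so the relevant endpoints either lie in $\Gamma_r^{(m)}$ or are not attained by any element of $\Gamma_r^{(m)}$, reducing each configuration to a clean finite Boolean expression; the full circle $\Gamma_r^{(m)}$ itself is definable as $X\cup(\Gamma_r\setminus X)$ restricted appropriately, so the degenerate cases cause no trouble. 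Since only finitely many configurations arise, a routine (if tedious) check completes the argument.
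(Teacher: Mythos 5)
There is a genuine gap in the main step. Your plan is to realize $(\alpha,\beta\gamma_2)\cap\Gamma_r^{(m)}$ as a Boolean combination of $X$ and the \emph{single} translate $\gamma_2 X=(\alpha\gamma_2,\beta\gamma_2)\cap\Gamma_r^{(m)}$, corrected by finitely many singletons. This only works when the two intervals overlap, i.e.\ when the ``shift'' $\gamma_2$ is smaller than the length $\lambda=\beta\alpha^{-1}$ of $(\alpha,\beta)$. If $\gamma_2$ moves the interval by more than its own length, then $(\alpha,\beta)$ and $(\alpha\gamma_2,\beta\gamma_2)$ are disjoint and the target interval $(\alpha,\beta\gamma_2)$ contains the entire gap $(\beta,\alpha\gamma_2)$, which is an infinite (dense) subset of $\Gamma_r^{(m)}$ lying in neither $X$ nor $\gamma_2 X$. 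Worse, the points of $(\beta,\alpha\gamma_2)$ and the points of $(\beta\gamma_2,\alpha)$ satisfy exactly the same membership pattern with respect to $X$ and $\gamma_2 X$ (they belong to neither), yet the former must be included in your target set and the latter excluded; so no Boolean combination of these two sets, even after finitely many singleton corrections, can produce $(\alpha,\beta\gamma_2)\cap\Gamma_r^{(m)}$. Your case analysis never confronts this configuration.

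The missing idea is to bridge the gap with a \emph{chain of overlapping translates}. The paper picks $\delta\in(\lambda^{1/2},\lambda)\cap\Gamma_r^{(m)}$, so each translate $\delta^{i}X$ overlaps the previous one, and the union $X\cup\delta X\cup\cdots\cup\delta^{n}X$ is the single interval $(\alpha,\beta\delta^{n})\cap\Gamma_r^{(m)}$ with no singleton corrections needed (the overlap of consecutive open intervals swallows the would-be endpoints). Taking $n$ maximal with $\gamma\delta^{-n}<\lambda$ and then adjoining the last piece $(\alpha\delta^{n+1},\beta\gamma)\cap\Gamma_r^{(m)}=\delta^{n+1}\bigl(X\cap\gamma\delta^{-(n+1)}X\bigr)$ lands the right endpoint exactly on $\beta\gamma$. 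Your opening observation (translates by elements of $\Gamma_r^{(m)}$ are definable and preserve $\Gamma_r^{(m)}$) and the reduction to adjusting one endpoint are both correct and match the paper; it is the iteration that cannot be omitted. A secondary, smaller issue: your claim that the relevant singletons are definable is not justified (the endpoints $\alpha,\beta$ need not even lie in $\Gamma_r$), but with the chain argument the singletons are not needed at all.
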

 
 \begin{proof}
 It suffices to show that $Y=(\alpha,\beta\gamma)\cap\Gamma_r^{(m)}$ is definable in $(\Gamma_r,\cdot,X)$, where $\gamma=\gamma_2\gamma_1^{-1}$. 
 
 \smallskip\noindent
 Let $\lambda=\beta\alpha^{-1}$ and take $\delta\in(\lambda^{1/2},\lambda)\cap\Gamma_r^{(m)}$. Suppose that $n$ is the largest natural number such that $\gamma\delta^{-n}<\lambda$.
 
 
 \smallskip\noindent 
 Then $Y$ is the following union: 
  \[
 Y= X\cup\delta X\cup\dots\cup\delta^{n}X\cup ((\alpha\delta^{n+1},\beta\gamma)\cap\Gamma_r^{(m)}),
  \]
 Note that 
  \[(\alpha\delta^{n+1},\beta\gamma)\cap\Gamma_r^{(m)}=\delta^{n+1}(X\cap\gamma(\delta^{n+1})^{-1}X).\]
So $Y$ is definable in $(\Gamma_r,\cdot,X)$.  
 \end{proof}

%

\noindent
Letting $X=P_r$ in this lemma, we have the following consequence.

\begin{corollary}
Any interval of $\Gamma_r$ is definable in $\mathfrak G$.
\end{corollary}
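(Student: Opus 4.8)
The plan is to show that every orientation interval of $\Gamma_r$ is definable in $\mathfrak{G}$, using Lemma~\ref{convex_lemma} with $X = P_r$. Recall that $P_r$ as a subset of $\Gamma_r$ is the interval $\big(h(-1),1\big)\cap\Gamma_r$, which is exactly $(\alpha,\beta)\cap\Gamma_r^{(m)}$ in the notation of Lemma~\ref{convex_lemma} with $m=1$, $\alpha = h(-1)$, and $\beta = 1$; here $\Gamma_r^{(1)} = \Gamma_r$. So Lemma~\ref{convex_lemma} immediately tells us that any set of the form $\big(h(-1)\gamma_1, \gamma_2\big)\cap\Gamma_r$ with $\gamma_1,\gamma_2\in\Gamma_r$ is definable in $(\Gamma_r,\cdot,P_r)$, and hence in $\mathfrak{G}$.

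Next I would observe that an arbitrary orientation interval of $\Gamma_r$ has the form $(\mu,\nu)\cap\Gamma_r$ for some $\mu,\nu\in\Gamma_r$ (the endpoints must lie in $\Gamma_r$ if we want them to be picked out by parameters, but in any case every interval with endpoints in $\Gamma_r$ is of this shape; intervals with irrational endpoints reduce to these since $\Gamma_r$ is dense and the trace on $\Gamma_r$ of $(\mu,\nu)$ for irrational endpoints agrees with the trace of a suitable interval with endpoints in $\Gamma_r$ — more precisely, since only the intersection with $\Gamma_r$ matters and $\Gamma_r$ is a group, we may as well take both endpoints in $\Gamma_r$). Given such $\mu,\nu\in\Gamma_r$, set $\gamma_1 := h(1)\mu$ and $\gamma_2 := \nu$; both lie in $\Gamma_r$ since $h(1)\in\Gamma_r$. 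Then $h(-1)\gamma_1 = h(-1)h(1)\mu = \mu$, so $\big(h(-1)\gamma_1,\gamma_2\big)\cap\Gamma_r = (\mu,\nu)\cap\Gamma_r$, which is the given interval. By Lemma~\ref{convex_lemma} this set is definable in $\mathfrak{G}$.

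Finally I would handle the half-open and closed variants $[\mu,\nu)\cap\Gamma_r$, $(\mu,\nu]\cap\Gamma_r$, $[\mu,\nu]\cap\Gamma_r$: if the endpoint $\mu$ (respectively $\nu$) lies in $\Gamma_r$, then $\{\mu\}$ is definable with parameter $\mu$, so these are obtained from the open interval by adjoining or removing finitely many definable points; if an endpoint is not in $\Gamma_r$ the half-open and open intervals have the same trace on $\Gamma_r$ and there is nothing to do.

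I do not expect any real obstacle here: the statement is a direct specialization of Lemma~\ref{convex_lemma}. The only mild care needed is in the bookkeeping for translating a general interval $(\mu,\nu)\cap\Gamma_r$ into the normalized form $\big(h(-1)\gamma_1,\beta\gamma_2\big)\cap\Gamma_r^{(m)}$ appearing in that lemma, i.e.\ checking that the parameters $\gamma_1,\gamma_2$ can indeed be chosen in $\Gamma_r$ (equivalently $\Gamma_r^{(m)}$ with $m=1$), which is immediate since $\Gamma_r$ is a group containing $h(\pm 1)$.
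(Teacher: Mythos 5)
Your proof is correct and is exactly the paper's argument: the corollary is obtained by taking $X=P_r=(h(-1),1)\cap\Gamma_r$ (so $m=1$, $\alpha=h(-1)$, $\beta=1$) in Lemma~\ref{convex_lemma} and absorbing the endpoint $h(-1)$ into the parameter $\gamma_1\in\Gamma_r$. The only blemish is your parenthetical about endpoints outside $\Gamma_r$ --- by density of $\Gamma_r$ the trace of such an interval does \emph{not} agree with the trace of any interval with endpoints in $\Gamma_r$ --- but this is irrelevant here, since the paper's convention is that an interval of $\Gamma_r$ has its endpoints in $\Gamma_r$.
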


\medskip\noindent
Therefore, the structures $\mathfrak G$  and $\big(\Gamma_r,\cdot,\Cal O\big)$ are interdefinable,  where $\Cal O$ is the restriction of the orientation of $\mathbb S$ to $\Gamma_r$. In \cite{Tran-Walsberg_expansions_of_Z}, the authors study the latter structure; or rather the pull-back of it under $h$. Even though some of our results are slightly finer than theirs, they also work out some stability theoretic properties of this structure. Most notably, they show that it is dp-minimal.  


\medskip\noindent
We appeal to the topology on $\Gamma_r^n$ induced from $\S^n$ in order to study definable sets. An open basis for that topology on $\Gamma_r$ consists of sets of the form $I\cap \Gamma_r$ where $I$ is an open interval of $\S$; below we refer to these sets as \emph{convex} sets. From now on we use the word \emph{interval} to mean \emph{open interval of $\Gamma_r$}; so an interval is a convex set whose end points are in $\Gamma_r$. 

\medskip\noindent
The quantifier elimination result in the previous section gives a very simple characterization of definable sets in a model $\Cal M$ of $T_{r}^*$:  They are Boolean combinations of sets defined by formulas of the form
 \[
 a+\vec k\vec x=0, D_{m,+}(a+\vec k\vec x)\text{,and } D_{m,-}(a+\vec k\vec x)
 \]
where $\vec x=(x_1,\dots, x_n)$ is a tuple of variables, $a\in M$ and $\vec k\in\Z^n$.

\medskip\noindent
It is simpler in the sense that the negations of formulas $D_{m,+}(a+\vec k\vec x)$ and $D_{m,-}(a+\vec k\vec x)$ define sets that are finite unions of sets defined by the same kind of formulas.  Also we would like to consider the formula 
\[D_{m,-}(a+\vec k\vec x)\wedge  a+\vec k\vec x\neq 0\wedge a+\vec k\vec x\neq -m\]
in the place of $D_{m,-}(a+\vec k\vec x)$. 

\medskip\noindent
So any definable set in $\mathfrak G$ is of the form
\begin{equation}\label{equation_general}
\bigcup_{i=1}^s\bigcap_{j=1}^{t_i}X_{ij}, \tag{$*$}
 \end{equation}
where each $X_{ij}$ is one of the following forms
 
 \begin{equation}\label{equation_eq}
 \{\vec x\in\Gamma_r^n:\vec x^{\vec k}h(a)=1\}\tag{A}
 \end{equation}

 \begin{equation}\label{equation_neq}
 \{\vec x\in\Gamma_r^n:\vec x^{\vec k}h(a)\neq 1\}\tag{B}
 \end{equation}

 \begin{equation}\label{equation_m+}
\{\vec x\in\Gamma_r^n:\vec x^{\vec k}h(a)\in P_r^{(m)}\}\tag{C}
 \end{equation}

  \begin{equation}\label{equation_m-}
\{\vec x\in\Gamma_r^n:\vec x^{\vec k}h(a)\in Q_r^{(m)}\}\tag{D}
 \end{equation}
 where $\vec k=(k_1,\dots,k_n)\in\Z^n\setminus\{\vec 0\}$, $a\in \Z$, $m>0$, $\vec x^{\vec k}:=x_1^{k_1}x_2^{k_2}\cdots x_n^{k_n}$, and $Q_r=(1,h(-1))\cap\Gamma_r$. 

\medskip\noindent
Note that a set of the form (A) is nonempty if and only if $\kappa|a$ where $\kappa$ is the greatest common divisor of the integers $k_i$. We refer to a finite intersection of sets of the form (A) as an \emph{affine subset} of $\Gamma_r^n$ provided that it is nonempty. Affine subsets of $\Gamma_r$ are singletons, and if $Y$ is an affine subset of $\Gamma_r^n$ with $n>1$, then there is a projection $\pi:\Gamma_r^n\to\Gamma_r^d$ with $d<n$ such that $\pi|_Y$ is injective.  Also for an affine subset $Y$, $\vec b\in\Z^n$, and $N>0$, the set $(h(\vec b)Y)^{1/N}$ is either empty or an affine subset.

\medskip\noindent
Suppose that $Y$ is a set of the form (C) or (D), $\vec b\in\Z^n$, and let $N\in\N_+$ be a multiple of $m$. Then the set $(h(\vec b)Y)^{1/N}$ is nonempty if and only if $m|a-\vec k\vec b$, and in that case 
 \[
 (h(\vec b)Y)^{1/N}=\Big\{\vec y:(\vec y^{\vec k})^{N/m}\in h(\frac{\vec k\vec b-a}{m})I\Big\},
 \]
where $I$ is one of $P_r$ or $Q_r$. As a result, it is an open subset of $\Gamma_r^n$.

\medskip\noindent
Putting these together we have the following result.

\begin{proposition}\label{open_affine_prop}
Let $X$ be definable in $\mathfrak G$. Then there is $N\in\N_+$ such that for every $\vec b\in\Z^n$ the set $(h(\vec b)X)^{1/N}$ is  a union of an open set and finitely many subsets of affine sets.
\end{proposition}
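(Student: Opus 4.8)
The plan is to derive this directly from the quantifier elimination theorem together with the bookkeeping established in the paragraphs just before the statement. By Theorem~\ref{QE}, every definable $X\subseteq\Gamma_r^n$ in $\mathfrak G$ can be written in the form~\eqref{equation_general}, i.e.\ as $\bigcup_{i=1}^{s}\bigcap_{j=1}^{t_i}X_{ij}$ with each block $X_{ij}$ of one of the shapes (A)--(D). I fix such a representation once and for all, let $m_1,\dots,m_\ell$ be the finitely many moduli appearing in the blocks of type (C) and (D), and set $N:=\lcm(m_1,\dots,m_\ell)$ (and $N:=1$ if no such blocks occur). Crucially, $N$ is determined by the chosen representation of $X$ alone and not by $\vec b$, which is what the statement demands.

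Next I would record that, for a fixed $\vec b\in\Z^n$, the operation $Z\mapsto(h(\vec b)Z)^{1/N}$ on subsets of $\Gamma_r^n$ commutes with finite unions and finite intersections: translation by $h(\vec b)$ is a bijection of $\Gamma_r^n$, and taking $N$-th roots distributes over $\cup$ and $\cap$ by Lemma~\ref{union_of_powers}(3). Consequently
\[
 (h(\vec b)X)^{1/N}=\bigcup_{i=1}^{s}\bigcap_{j=1}^{t_i}\big(h(\vec b)X_{ij}\big)^{1/N},
\]
and it suffices to analyze the individual pieces $\big(h(\vec b)X_{ij}\big)^{1/N}$ and then reassemble. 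For a block of type (A) --- an affine set or empty --- the remark preceding the statement gives that $\big(h(\vec b)X_{ij}\big)^{1/N}$ is again empty or affine, hence in all cases a subset of an affine set. For a block of type (B), write $X_{ij}=\Gamma_r^n\setminus A$ with $A$ of type (A); since the power map $\vec x\mapsto\vec x^{\vec k}$ is continuous on $\Gamma_r^n$, the set $A$ (and hence any affine set) is closed, and then $\big(h(\vec b)X_{ij}\big)^{1/N}=\Gamma_r^n\setminus\big(h(\vec b)A\big)^{1/N}$ is the complement of an empty-or-affine, hence closed, set, so it is open. Finally, for a block of type (C) or (D), since $N$ is a multiple of the relevant modulus, the explicit description of $\big(h(\vec b)Y\big)^{1/N}$ recorded just before the statement exhibits it as the preimage of an open interval under a continuous map, so it is open (or empty).

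It remains to reassemble. For each $i$: if every factor $\big(h(\vec b)X_{ij}\big)^{1/N}$ is open, then the finite intersection $\bigcap_{j}\big(h(\vec b)X_{ij}\big)^{1/N}$ is open; otherwise at least one factor is contained in an affine set $Y_i$, whence the whole intersection is a subset of $Y_i$. Splitting the outer union of~\eqref{equation_general} according to these two cases, the union of the open intersections is an open set, and what remains is a union of finitely many subsets of affine sets. This is exactly the claimed shape.

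I do not expect a genuine obstacle here: the substance has been front-loaded into Theorem~\ref{QE} and into the elementary facts about the shapes (A)--(D) assembled right before the statement. The only points needing a touch of care are (i) that $N$ must be selected from the fixed representation before $\vec b$ is seen, so that it is uniform in $\vec b$, and (ii) the small topological observation that type (A) (hence affine) sets are closed --- this is what allows the type (B) blocks to be folded into the open part of the conclusion rather than being treated as pieces of affine sets.
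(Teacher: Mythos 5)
Your proposal is correct and follows essentially the same route as the paper: write $X$ in the form $(*)$ via quantifier elimination, take $N$ to be the least common multiple of the moduli occurring in the representation (so it is independent of $\vec b$), and observe that each intersection $\bigcap_j\big(h(\vec b)X_{ij}\big)^{1/N}$ is contained in an affine set when some factor has type (A) and is open otherwise. Your explicit remark that type (B) blocks yield open sets (being complements of closed affine sets) is a detail the paper leaves implicit, but it is the same argument.
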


\begin{proof}
Write $X$ as in ($*$), and let $N$ be the lowest common multiple of the integers $m$ appearing in the formulas defining the sets $X_{ij}$. Given $i\in\{1,\dots,s\}$, if one of the sets $X_{ij}$ is of the form (A), then 
the set 
\[(h(\vec b)\bigcap_{j}X_{ij})^{1/N}=\bigcap_{j}(h(\vec b)X_{ij})^{1/N}\]
 is  contained in an affine set. Otherwise this set is open as noted above. 
\end{proof}

\medskip\noindent
Now we focus on unary definable sets with the aim of proving Theorem~\ref{th_1}. Combined with Proposition~\ref{kthpower}, Proposition~\ref{open_affine_prop} gives the following for unary subsets of $\Gamma_r$.

\begin{corollary}\label{def->convex1}
Suppose that $X\subseteq\Gamma_r$ is definable in $\mathfrak G$. Then there is $N\in\N_+$ such that for every $n\in\{0,1,\dots,N-1\}$, the set $(h(n)X)^{1/N}$ is a finite union of convex sets and singletons. Moreover, the end points of the convex sets are in $\<\Gamma_r\>$.
\end{corollary}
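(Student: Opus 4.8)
The plan is to combine Proposition~\ref{open_affine_prop} with Proposition~\ref{kthpower} to pass from the multiplicative structure on $\Gamma_r$ to the additive picture on the real line, where ``open'' becomes ``finite union of intervals''. First I would apply Proposition~\ref{open_affine_prop} to the definable set $X\subseteq\Gamma_r$ to obtain $N\in\N_+$ such that for every $n\in\{0,1,\dots,N-1\}$ the set $Y_n:=(h(n)X)^{1/N}$ is a union of an open set $O_n$ and finitely many subsets of affine sets. Since $n=1$ here (unary case), an affine subset of $\Gamma_r$ is a singleton, so the finitely-many-subsets-of-affine-sets part is just a finite set of points. Hence each $Y_n$ is an open subset of $\Gamma_r$ together with finitely many extra points.

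The next step is to argue that an open subset of $\Gamma_r$ that is definable in $\mathfrak G$ is a finite union of convex sets. An open subset of $\Gamma_r$ is a union of intervals; the point is that a \emph{definable} one has only finitely many connected components. For this I would trace back through the explicit description: $O_n$ arose as a finite union over $i$ of sets of the form $\bigcap_j (h(\vec b)X_{ij})^{1/N}$ where each $X_{ij}$ is of type (C) or (D), and such a set was shown (just before Proposition~\ref{open_affine_prop}) to be of the form $\{\vec y:(\vec y^{\vec k})^{N/m}\in h(c)I\}$ with $I$ one of $P_r$ or $Q_r$. For $n=1$ this is $\{y\in\Gamma_r:(y^{k})^{N/m}\in h(c)I\}$, i.e. the preimage under the $(kN/m)$-th power map of a single interval intersected with $\Gamma_r$. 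Applying Corollaries~\ref{kthroot1} and~\ref{kthroot2} (equivalently Proposition~\ref{kthpower}), this preimage is a union of at most $|kN/m|$ translated intervals, hence convex-by-finite; a finite intersection and finite union of such sets is again a finite union of convex sets (and singletons). So each $Y_n$ is a finite union of convex sets and singletons.

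Finally I would address the endpoints. The convex sets produced above have endpoints of the form $h(c)^{1/k}\zeta_k^s$ or $h(c)^{1/(kN/m)}\zeta_{kN/m}^s$ for integers $c,k,s$ — that is, $N'$-th roots of elements of $\Gamma_r$ times roots of unity, for suitable $N'$. Since $\zeta_{N'}\in\<\{1\}\>\subseteq\<\Gamma_r\>$ and any root of an element of $\Gamma_r$ lies in $\<\Gamma_r\>$ by definition of $\<\cdot\>$, all these endpoints lie in $\<\Gamma_r\>$, and this property is preserved under the Boolean operations (the endpoints of components of a union/intersection of the sets in play are among the endpoints of the pieces). This gives the ``moreover'' clause. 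The main obstacle I expect is the bookkeeping in the second step: making precise that a finite Boolean combination of ``convex-by-finite'' sets in a one-dimensional ordered-like structure is again convex-by-finite, and that the open part really does decompose with only finitely many pieces; everything else is a direct translation via Proposition~\ref{kthpower}.
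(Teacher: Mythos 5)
Your proposal is correct and follows essentially the same route as the paper: apply Proposition~\ref{open_affine_prop} to get $N$, note that affine subsets of $\Gamma_r$ are singletons, and then use Proposition~\ref{kthpower} (via Corollaries~\ref{kthroot1} and~\ref{kthroot2}) to see that the open part is a positive Boolean combination of preimages of intervals under power maps, hence a finite union of convex sets with endpoints of the form $h(c)^{1/k}\zeta_k^s\in\<\Gamma_r\>$. The bookkeeping you flag (finite Boolean combinations of convex-by-finite sets remain convex-by-finite, with endpoints drawn from those of the pieces) is exactly what the paper also asserts without further detail.
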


\begin{proof}
Affine subsets of $\Gamma_r$ are singletons and the open set appearing in $(h(n)X)^{1/N}$ is a certain positive Boolean combination of sets of the form:
\[ \Big\{y\in\Gamma_r:y^{kN}\neq h(-n)\Big\}.\]
and
\[ \Big\{y\in\Gamma_r:y^{kN/m}\in h(\frac{kb-n}{m})I\Big\}\]
where $I$ is one of $P_r$ or $Q_r$.

\smallskip\noindent
Using Proposition~\ref{kthpower}, if such a combination is not empty, then it is a finite union of convex sets whose end points are in $\<\Gamma_r\>$.
\end{proof}

\begin{definition}
Let $X\subseteq \Z$. We say that $X$ has the \emph{uniform gaps property} if there is $M\in\N_+$ such that 
 \[X\cap\{x+1,\dots,x+M\}\neq\emptyset\]
for every $x\in X$.
\end{definition}

\noindent
Clearly, $\emptyset$ has the uniform gaps property and it is the only finite set that has the uniform gaps property. The following is also clear.

\begin{lemma}\label{uniform_gaps_lemma}
 If $X,Y\subseteq\Z$ have the uniform gaps property. Then $X\cup Y$ has the uniform gaps property. 
\end{lemma}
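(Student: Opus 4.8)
The plan is to take the obvious union bound on the gap constants. First I would fix witnesses $M_X, M_Y \in \N_+$ for the uniform gaps property of $X$ and of $Y$ respectively, so that every $x \in X$ has $X \cap \{x+1,\dots,x+M_X\} \neq \emptyset$ and similarly for $Y$ with $M_Y$. Then I would set $M := \max(M_X, M_Y)$ and claim this $M$ works for $X \cup Y$.

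To verify the claim, let $x \in X \cup Y$ be arbitrary. If $x \in X$, then $X \cap \{x+1,\dots,x+M_X\} \neq \emptyset$, and since $M_X \le M$ and $X \subseteq X \cup Y$, we get $(X \cup Y) \cap \{x+1,\dots,x+M\} \neq \emptyset$. The case $x \in Y$ is symmetric, using $M_Y \le M$. Hence $X \cup Y$ satisfies the uniform gaps property with constant $M$.

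There is no real obstacle here; the only point worth noting is that one must take the maximum (not, say, the sum or the larger interval length for just one of the sets) so that a single $M$ handles both alternatives for the membership of $x$. The statement extends by an immediate induction to any finite union, which is how it will be applied to the decomposition in \eqref{equation_general}.
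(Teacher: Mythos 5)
Your proof is correct: taking $M=\max(M_X,M_Y)$ and splitting on whether $x\in X$ or $x\in Y$ is exactly the argument intended, and the paper itself omits the proof as ``clear.'' Nothing further is needed.
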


\begin{lemma}\label{uniform_gaps_convex}
If $C\subseteq\Gamma_r$ is a convex set, then $X=h^{-1}(C)$ has the uniform gaps property.
\end{lemma}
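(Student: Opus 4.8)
The plan is to unwind the definition of a convex set and translate the statement back into the language of Beatty sequences via the isomorphism $h$. Recall that a convex set $C = (\alpha,\beta)\cap\Gamma_r$ for some $\alpha,\beta\in\S$; its preimage is $X = \{n\in\Z : h(n)\in(\alpha,\beta)\} = \{n\in\Z : \{n/r\}\in J\}$ for a suitable arc, i.e. essentially $X = \{n : \{n/r\} \in (u,v)\}$ for some $0\le u < v \le 1$ (with the obvious wrap-around adjustment if $\alpha,\beta$ straddle $1$). The key point is that such an $X$ is a generalized Beatty-type set, so it cannot have arbitrarily long gaps.

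First I would reduce to the case where $C$ is an open interval with endpoints in $\Gamma_r$ (if $\alpha$ or $\beta$ lies outside $\Gamma_r$, shrink slightly to endpoints in $\Gamma_r$; this only shrinks $X$, and a subset of a uniform-gaps set... wait — that is the wrong direction). So instead I would \emph{enlarge}: it suffices to show the claim for $X$ the preimage of \emph{any} nonempty convex set, and every nonempty convex set contains a smaller convex set of the form $h^{-1}$ of an arc of the form $(h(a), h(b))$ with $a,b\in\Z$ — no, again the monotonicity goes the wrong way for a containment argument. The clean route is direct: write $C = (\alpha,\beta)\cap\Gamma_r$ with $l((\alpha,\beta)) = \beta\alpha^{-1} = e(\theta)$ for some $\theta\in(0,1)$, so that $X = \{n\in\Z : h(n)\in(\alpha,\beta)\}$. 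By density of $\Gamma_r$ in $\S$ (the subgroup is infinite, hence dense by Proposition~\ref{regularly_dense_prop}), the arc $(\alpha,\beta)$ of length $e(\theta)$ is nonempty, and in fact the sequence $(h(n))_{n\in\Z}$ is equidistributed on $\S$ (three-distance / Weyl), so the return times to the arc have bounded gaps. Concretely: pick $N$ large enough that $h(0), h(1), \dots, h(N)$ are $\theta$-dense in $\S$, meaning every arc of length $\ge e(\theta)$ contains some $h(i)$ with $0\le i\le N$; this $N$ exists because $\{h(i):0\le i\le N\}$ becomes arbitrarily dense as $N\to\infty$. Then for any $n\in X$, consider the arc $(\alpha h(-n), \beta h(-n))$, which has the same length $e(\theta)$; it contains some $h(i)$ with $1\le i\le N$ (we can start the range at $1$ since the arc is nonempty and open), and $h(i)\in(\alpha h(-n),\beta h(-n))$ means $h(n+i)\in(\alpha,\beta)$, i.e. $n+i\in X$ with $1\le i\le N$.

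The key steps, in order: (1) translate $X = h^{-1}(C)$ into the arc-membership description using the orientation and the definition of $h$; (2) invoke density/equidistribution of $(h(n))_n$ in $\S$ to produce $N = N(\theta)$ such that every arc of length $\ge e(\theta)$ meets $\{h(1),\dots,h(N)\}$ — here one can cite Proposition~\ref{regularly_dense_prop} for density and then note that a dense subgroup's first $N$ elements become $\epsilon$-dense for $N$ large, or more elementarily use that $\{n/r \bmod 1\}$ is equidistributed since $r$ is irrational; (3) for $n\in X$, apply the translation-invariance of orientation (the identity $\Cal O(\alpha,\beta,\gamma)\Leftrightarrow\Cal O(\alpha\delta,\beta\delta,\gamma\delta)$) to transport the arc to have $h(-n)$ applied, locate $h(i)$ inside it, and translate back to conclude $n+i\in X$; (4) set $M = N$ in the definition of the uniform gaps property.

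The main obstacle I anticipate is step (2): making precise and justifying the uniformity of $N$, i.e. that a single $N$ works for all translates of the arc simultaneously. This is really the statement that the orbit $(h(n))_n$ has bounded return-time gaps to any fixed-length arc, which is the quantitative form of minimality of an irrational rotation. The cleanest justification is: the finite set $\{h(0),\dots,h(N)\}$ partitions $\S$ into $N+1$ arcs whose maximal length tends to $0$ as $N\to\infty$ (by density of $\Gamma_r$), so for $N$ large every gap is $< e(\theta)$; then any arc of length $e(\theta)$ must contain at least one of the points $h(0),\dots,h(N)$, and since the arc is open and nonempty we may even discard the endpoint $h(0)$ and still find a point among $h(1),\dots,h(N)$ provided $N$ is chosen one larger. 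Everything else — the translation of the orientation relation, the passage between $C$ and $X$ — is routine bookkeeping with the conventions already set up in Section~\ref{circle_section}.
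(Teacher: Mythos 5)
Your proof is correct, and it establishes the lemma by a slightly different mechanism resting on the same underlying fact: that the forward orbit $h(\N_+)$ accumulates at $1$ (equivalently, is dense in $\S$). You build an $\varepsilon$-net: choose $N$ so that the complementary arcs of $\{h(1),\dots,h(N)\}$ are all shorter than $l(C)$, so that every translated arc $\big(\alpha h(-x),\beta h(-x)\big)$ captures some $h(i)$ with $1\le i\le N$, whence $x+i\in X$. The paper instead picks only two positive integers: $m$ with $h(m)\in(1,\gamma^{1/2})$ and $n$ with $h(-n)\in(1,\gamma^{1/2})$, where $\gamma=l(C)$; it splits $C$ at the midpoint $\alpha\gamma^{1/2}$ and notes that a point of $C$ in the first half stays in $C$ after multiplication by $h(m)$, while a point in the second half stays in $C$ after multiplication by $h(n)$ (a backward rotation by less than half the length), so $M=\max\{m,n\}$ works. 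Your covering argument yields a larger constant but is the standard bounded-return-time statement for irrational rotations and applies verbatim to any arc of the same length, not just $C$. The one loose end you already flag—excluding the index $0$ from the net—is cleanest to handle by observing that the complementary arcs of $\{h(1),\dots,h(N)\}$ are a rotation of those of $\{h(0),\dots,h(N-1)\}$, so the net indexed from $1$ already has all gaps short. Finally, note that both proofs quietly need density of the \emph{forward} orbit rather than of $\Gamma_r$ itself (Proposition~\ref{regularly_dense_prop} gives only the latter); your appeal to equidistribution of $\{n/r\}$, or a pigeonhole argument, fills this in, whereas the paper simply asserts the existence of $m$ and $n$.
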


\begin{proof}
Let $C=(\alpha,\beta)\cap\Gamma_r$ and let $\gamma:=l(C)$. Take $m,n\in\N_+$ such that $h(m)<\gamma^{1/2}$ and $h(-n)<\gamma^{1/2}$. We claim that $M=\max\{m,n\}$ witnesses that $X$ has the uniform gaps property.

\medskip\noindent
Suppose that $x\in X$. If $h(x)<\alpha\gamma^{1/2}$, then $h(x)h(m)\in C$ and hence $x+m\in X$, and if $\alpha\gamma^{1/2}<h(x)$, then $h(x)h(n)\in C$ and hence $x+n\in X$.
\end{proof}

\medskip\noindent
\emph{Remark. }This proof does not work for larger models of $T_{r}^*$, because the length of $C$ might be infinitesimal with respect to $\Gamma_r$ and hence we cannot find suitable $m$ and $n$.

\medskip\noindent
The following is Theorem~\ref{th_1} from the Introduction. 

\begin{theorem}\label{uniform_gaps_th}
 Every infinite subset of $\Z$ definable in $\mathfrak Z$ has the uniform gaps property.
\end{theorem}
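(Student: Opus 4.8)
The plan is to reduce an arbitrary infinite definable set $X \subseteq \Z$ to the convex pieces supplied by Corollary~\ref{def->convex1}, and then transport the uniform gaps property back through the $N$-th power map. First I would pass to the isomorphic structure $\mathfrak G$ and view $X$ as (the $h$-image of) a definable subset of $\Gamma_r$. Apply Corollary~\ref{def->convex1} to get $N \in \N_+$ such that for each $n \in \{0,1,\dots,N-1\}$ the set $Y_n := (h(n)X)^{1/N}$ is a finite union of convex sets and singletons. By Lemma~\ref{union_of_powers}(4), $X$ is the union of the sets $h(-n)\big(Y_n^{(N)}\big)$ over $n$, so (using Lemma~\ref{uniform_gaps_lemma} and the fact that translates of a set with the uniform gaps property again have it) it suffices to show each $h^{-1}\big(Y_n^{(N)}\big)$ has the uniform gaps property. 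Since $X$ is infinite, at least one $Y_n$ must be infinite, hence contain a genuine convex set $C$ (not just singletons); the singleton parts and the finite $Y_n$ contribute finite sets, which trivially have the property.

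\textbf{Main step.} The crux is therefore: if $C \subseteq \Gamma_r$ is a convex set, then $h^{-1}\big(C^{(N)}\big)$ has the uniform gaps property. Now $C^{(N)} = \{\gamma^N : \gamma \in C\}$, and by Proposition~\ref{kthpower} (or directly), the preimage of $C^{(N)}$ under the $N$-th power map is a finite union of convex sets $C_0, \dots, C_{N-1}$ (roughly, the $N$ arcs $\zeta_N^s C^{1/N} \cap \Gamma_r$, after intersecting with $\Gamma_r^{(N)}$ one takes further preimages, but in any case finitely many convex pieces). More carefully: $\{y \in \Gamma_r : y^N \in C^{(N)}\}$ need not equal $\{y : y^N \in C\}$, but $C^{(N)}$ is still an orientation interval intersected with $\Gamma_r^{(N)}$, and taking its full $N$-th-power preimage inside $\Gamma_r$ yields finitely many convex sets by the argument already used in Corollary~\ref{def->convex1}. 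Each of these convex sets has the uniform gaps property by Lemma~\ref{uniform_gaps_convex}, and a finite union of such sets does too by Lemma~\ref{uniform_gaps_lemma}. Hence $h^{-1}\big(C^{(N)}\big)$ has the uniform gaps property, completing the reduction.

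\textbf{Where the difficulty lies.} The only subtle point is bookkeeping: Corollary~\ref{def->convex1} describes $(h(n)X)^{1/N}$, and I must correctly invert the operations $Y \mapsto Y^{(N)}$ and $X \mapsto h(n)X$ to recover $X$ itself, using Lemma~\ref{union_of_powers}(2),(4). In particular one must note that $\big((h(n)X)^{1/N}\big)^{(N)} = h(n)X \cap (\Gamma_r)^{(N)}$ only captures the $N$-divisible part of $h(n)X$, which is precisely why Lemma~\ref{union_of_powers}(4) is phrased as a union over all residues $\vec a$ (here $n \in \{0,\dots,N-1\}$): reassembling these residue classes recovers all of $X$. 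Once that decomposition is in hand, everything else is an application of Lemmas~\ref{uniform_gaps_lemma}, \ref{uniform_gaps_convex} together with the closure of the uniform gaps property under translation $x \mapsto x + \text{const}$ (immediate from the definition). The infiniteness of $X$ is used only to guarantee that the convex-set contributions are what matters; the finite leftovers are harmless since every finite set has the uniform gaps property vacuously or by enlarging $M$.
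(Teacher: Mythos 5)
Your proof follows essentially the same route as the paper's: decompose $h(X)$ via Corollary~\ref{def->convex1} and Lemma~\ref{union_of_powers}, observe that it suffices to treat $(h(n)X)^{1/N}$ because the $N$-th power map (i.e.\ multiplication by $N$ in $\Z$) only scales gap bounds, and finish with Lemmas~\ref{uniform_gaps_convex} and~\ref{uniform_gaps_lemma}. One caveat: your closing claim that ``every finite set has the uniform gaps property vacuously or by enlarging $M$'' contradicts the paper's own remark that $\emptyset$ is the only finite set with the property; the singleton pieces must instead be absorbed into an infinite convex piece (which is necessarily unbounded above) by enlarging $M$ --- a small point that the paper's proof also leaves implicit.
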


\begin{proof}
Let $X\subseteq \Z$ be definable in $\mathfrak Z$ and let $Y:=h(X)$.  

\medskip\noindent
By Corollary~\ref{def->convex1}, there is $N>0$ such that $(h(n)Y)^{1/N}$ is a finite union of convex sets and singletons for each $n\in\{0,1,\dots,N-1\}$.

\medskip\noindent
Using the second and the last part of Lemma~\ref{union_of_powers}
 \[
 Y=\bigcup_{n=0}^{N-1}h(-n)((h(n)Y)^{1/N})^{(N)}.
 \]
Therefore by Lemma~\ref{uniform_gaps_lemma}, it suffices to show that $h^{-1}\big(((h(n)Y)^{1/N})^{(N)}\big)$ has the uniform gaps property for each $n$. Since $y\in ((h(n)Y)^{1/N})^{(N)}$ if and only if $y=z^N$ for some $z\in (h(n)Y)^{1/N}$, it suffices to show that $h^{-1}\big((h(n)Y)^{1/N}\big)$ has the uniform gaps property. It is indeed the case using Lemmas \ref{uniform_gaps_lemma} and \ref{uniform_gaps_convex}.
\end{proof}

\begin{corollary}\label{reverse_gaps_cor}
Let  $X$ be an infinite subset of $\Z$ that is definable in $\mathfrak Z$. Then there is $M\in\N_+$ such that for every $x\in X$, the intersection $X\cap\{x-1,x-2,\dots,x-M\}$ is nonempty.
\end{corollary}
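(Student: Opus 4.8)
The plan is to deduce this from Theorem~\ref{uniform_gaps_th} by a symmetry argument. The key observation is that the map $n\mapsto -n-1$ on $\Z$, which corresponds under $h$ to $\alpha\mapsto\alpha^{-1}$ on $\Gamma_r$, preserves definability in $\mathfrak Z$ and essentially swaps ``looking forward'' with ``looking backward''. More precisely, by Lemma~\ref{negations} (or directly from $\floor{-x}=-\floor{x}-1$), the set $P_r$ is invariant under $n\mapsto -n-1$, so the map $\sigma\colon\Z\to\Z$, $\sigma(n)=-n-1$, is an automorphism of $\mathfrak Z$; hence if $X$ is definable in $\mathfrak Z$ then so is $\sigma(X)=\{-x-1:x\in X\}$.

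First I would note that $\sigma(X)$ is infinite since $X$ is. Applying Theorem~\ref{uniform_gaps_th} to $\sigma(X)$ yields $M\in\N_+$ such that for every $y\in\sigma(X)$, one of $y+1,\dots,y+M$ lies in $\sigma(X)$. Now given $x\in X$, put $y=\sigma(x)=-x-1\in\sigma(X)$; then there is $j\in\{1,\dots,M\}$ with $y+j=-x-1+j\in\sigma(X)$, i.e.\ $\sigma(-x-1+j)=x-j\in X$. Since $j$ ranges over $\{1,\dots,M\}$, this says exactly that $X\cap\{x-1,x-2,\dots,x-M\}\neq\emptyset$, which is the claim.

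I do not expect any real obstacle here; the only thing to be careful about is verifying that $\sigma$ is genuinely an automorphism of the $L_P$-structure $\mathfrak Z$, i.e.\ that $\sigma(P_r)=P_r$ and that $\sigma$ is a group automorphism of $(\Z,+)$. The latter fails for $\sigma$ as literally defined, since $\sigma(0)=-1\neq 0$, so it is cleaner to argue without invoking the word ``automorphism'': simply observe that $n\in P_r\iff -n-1\in P_r$ (Lemma~\ref{negations} with, say, $k=-1$, $c=0$, $a=n$, or directly), and that $X$ definable in $\mathfrak Z$ implies $\sigma(X)=\{-x-1:x\in X\}$ is definable (it is the image of $X$ under the $\mathfrak Z$-definable bijection $n\mapsto -n-1$, using that $P$ itself is preserved). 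Then run the computation of the previous paragraph. Alternatively, and perhaps even more transparently, one can bypass $\sigma$ entirely: pass to $\mathfrak G$, note that $Y=h(X)$ definable implies $Y^{-1}=\{y^{-1}:y\in Y\}$ definable (inversion is a group operation and $P_r=(h(-1),1)\cap\Gamma_r$ has $P_r^{-1}=(1,h(1))\cap\Gamma_r$, which is an interval, hence definable in $\mathfrak G$ by the Corollary following Lemma~\ref{convex_lemma}), apply Theorem~\ref{uniform_gaps_th} to $h^{-1}(Y^{-1})$, and translate back. Either route is a short argument.
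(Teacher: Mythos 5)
Your proposal is correct and is essentially the paper's own argument: reflect $X$, apply Theorem~\ref{uniform_gaps_th} to the reflected set, and translate back. The paper simply uses $-X=\{y:-y\in X\}$ (the image of $X$ under $n\mapsto -n$, definable because $-$ is in the language), which avoids all discussion of whether $P_r$ is invariant under $n\mapsto -n-1$ or whether $\sigma$ is an automorphism; as you yourself observe, none of that is needed, since the image of a definable set under any definable bijection is definable.
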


\begin{proof}
The set $-X:=\{y\in\Z:-y\in X\}$ is also definable in $\mathfrak Z$ and hence has the uniform gaps property. That translates to $X$ as the conclusion of the corollary.
\end{proof}

\begin{corollary}\label{ordering_cor}
Ordering of $\Z$ is not definable in $\mathfrak Z$.
\end{corollary}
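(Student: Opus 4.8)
The plan is to derive Corollary~\ref{ordering_cor} directly from Theorem~\ref{uniform_gaps_th} (together with Corollary~\ref{reverse_gaps_cor}) by exhibiting a subset of $\Z$ that would be definable if the ordering were, yet fails the uniform gaps property. The natural candidate is a set that grows faster than linearly, for instance a set like $\{2^n:n\in\N_+\}$ or, more simply, $\N_+$ itself is not quite enough (it has the uniform gaps property), so one needs a genuinely sparse set; the cleanest choice is a set with unbounded gaps such as $\{n^2:n\in\N\}$ or $\{2^n:n\in\N\}$.

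First I would observe that if $<$ were definable in $\mathfrak Z$, then $\mathfrak Z$ would define $(\Z,+,<)$, and in that structure one can define many sparse sets. The subtlety is that $(\Z,+,<)$ itself does not define $\{2^n\}$, so I would instead argue more carefully: the point is that $\mathfrak Z$ \emph{already} defines $P_r$, and combining $P_r$ with a definable ordering lets one define $P_r^+=P_r\cap\N$. But $(\Z,+,<,P_r^+)$ is a much richer structure — in fact, as noted in the Introduction, $(\Z,+,P_r^+)$ defines the ordering, and one can then recover, via the Beatty structure, sets that violate uniform gaps. Concretely, I would argue: in $(\Z,+,<)$ the set $\{x:x\in P_r^+ \text{ and } x\text{ is the }n\text{th element for some }n\in P_r^+\}$, or some similar self-referential diagonal construction using that the enumeration function of $P_r^+$ is definable from $<$ and $P_r^+$, produces an infinite set with arbitrarily large gaps. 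Alternatively and more cleanly: $(\Z,+,P_r^+,<)$ interprets enough arithmetic (or at least defines an infinite set that is not eventually periodic-with-bounded-gaps) to contradict Theorem~\ref{uniform_gaps_th}.

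Actually the simplest route avoids all of that. If $<$ is definable in $\mathfrak Z$, then since $P_r$ is definable, the set $P_r^+ = \{x\in P_r : x>0\}$ (equivalently $x \ge 1$) is definable in $\mathfrak Z$. Now $P_r^+$ is infinite, and by Theorem~\ref{uniform_gaps_th} it has the uniform gaps property — which it genuinely does, so no contradiction yet. The real contradiction comes from a set like $\N_+ \setminus P_r^+ = \{x > 0 : x \notin P_r\}$, which is also infinite and also has bounded gaps, so again no contradiction directly. So I must use a set whose gaps are provably unbounded. The honest argument: with $<$ available, one defines the successor function $S_{P}$ on $P_r^+$ sending the $n$th element to the $(n{+}1)$st, hence by iteration the "enumeration in steps along $P_r^+$" is definable, and from this one extracts the set $E = \{b_{b_n} : n > 0\}$ (indices along the Beatty sequence composed with itself); since $b_n \sim nr$, we get $b_{b_n}\sim n r^2$, and the consecutive differences $b_{b_{n+1}} - b_{b_n}$ are unbounded because they are roughly $r^2$ times the gap $b_{n+1}-b_n$ accumulated over $\sim r$ steps — more carefully, $E$ is infinite and its gaps tend to infinity, contradicting Theorem~\ref{uniform_gaps_th}. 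This $E$ is definable in $(\Z,+,<,P_r^+)$ because composition of the enumeration function with itself is definable once $<$ and $P_r^+$ are.

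The main obstacle is making the definability of the self-composed enumeration function precise and verifying that the resulting set has unbounded gaps, rather than merely large-but-bounded ones; one must be careful that $(\Z,+,<)$ on its own is rather weak (it is just Presburger arithmetic), so the enumeration function of $P_r^+$ is \emph{not} $(\Z,+,<)$-definable, but it \emph{is} definable from $P_r^+$ together with $<$ via the characterization "$y$ is the successor of $x$ in $P_r^+$ iff $x,y\in P_r^+$, $x<y$, and no element of $P_r^+$ lies strictly between" — and iterating a definable function a definable number of times requires an induction/least-element argument, which is available precisely because $<$ is a definable discrete order. Once that machinery is in place, the growth estimate $b_{b_n} \sim n r^2$ and the divergence of $b_{b_{n+1}} - b_{b_n}$ follow from elementary Beatty-sequence asymptotics, and the contradiction with Theorem~\ref{uniform_gaps_th} is immediate.
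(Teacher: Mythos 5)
Your proposal goes wrong in two places, and both are avoidable because the intended argument is a one-liner that you flag at the outset and then abandon. First, the set $E=\{b_{b_n}:n>0\}$ does \emph{not} have unbounded gaps: since $b_{n+1}-b_n\in\{\floor{r},\floor{r}+1\}$ is bounded, we get $b_{b_{n+1}}-b_{b_n}=\floor{b_{n+1}r}-\floor{b_nr}\leq(\floor{r}+1)r+1$, which is again bounded; your own asymptotic $b_{b_n}\sim nr^2$ already shows the consecutive differences hover around $r^2$ rather than diverging. So $E$ satisfies the uniform gaps property and yields no contradiction with Theorem~\ref{uniform_gaps_th}. Second, even setting that aside, the definability of $E$ is not established: first-order logic over $(\Z,+,<,P_r^+)$ does not allow you to iterate a definable function a variable number of times or to define the counting function $n\mapsto b_n$; the least-element principle for definable subsets is far weaker than the induction needed for that, so the claim that ``composition of the enumeration function with itself is definable'' is unjustified and almost certainly false.

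The fix is the argument you dismissed too quickly. You correctly observe that $\N_+$ has the \emph{forward} uniform gaps property, but Corollary~\ref{reverse_gaps_cor} --- which you cite in your opening sentence and then never use --- gives the \emph{backward} version: every infinite definable $X\subseteq\Z$ admits $M\in\N_+$ with $X\cap\{x-1,x-2,\dots,x-M\}\neq\emptyset$ for every $x\in X$. The set $\N_+$ fails this at $x=1$, since $\{0,-1,\dots,1-M\}$ misses $\N_+$ entirely for every $M$. Since definability of $<$ would make $\N_+=\{x:x>0\}$ definable, this already completes the proof; this is exactly the route the paper takes, and no sparse set needs to be constructed at all.
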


\begin{proof}
If the ordering were definable in $\mathfrak Z$, then so would be the set of positive elements. However, $\N_+$ does not satisfy the conclusion of Corollary~\ref{reverse_gaps_cor}.
\end{proof}

\begin{corollary}\label{multiplication_cor}
Multiplication is not definable in $\mathfrak Z$.
\end{corollary}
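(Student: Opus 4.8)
The plan is to reduce to Corollary~\ref{ordering_cor}: I will show that if multiplication were definable in $\mathfrak Z$, then the linear ordering of $\Z$ would be definable in $\mathfrak Z$ as well, which has just been ruled out.

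So suppose, towards a contradiction, that the ternary relation $\{(x,y,z)\in\Z^3 : z = xy\}$ is definable in $\mathfrak Z$. Since $+$, $-$, $0$ and $1$ already belong to the primitives of $\mathfrak Z$, it follows that every subset of every $\Z^n$ that is definable in the ring $(\Z,+,\cdot)$ is also definable in $\mathfrak Z$.

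Next I would invoke Lagrange's four-square theorem: for $n\in\Z$ we have $n\geq 0$ if and only if $\exists a\,\exists b\,\exists c\,\exists d\ \bigl(n = aa + bb + cc + dd\bigr)$, which is a first-order formula in the language of rings. Hence $\{n\in\Z : n\geq 0\}$ is definable in $\mathfrak Z$, and therefore so is the ordering, via $x\leq y \Leftrightarrow y-x\geq 0$. This contradicts Corollary~\ref{ordering_cor}, and completes the argument.

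There is essentially no technical obstacle here; the only point to verify is that the cited formula uses nothing beyond the ring operations, so that definability genuinely transfers through the assumed definition of multiplication. Alternatively, one could observe that $\N_+$ itself becomes definable in the same way, and then quote the failure of Corollary~\ref{reverse_gaps_cor} for $\N_+$ exactly as in the proof of Corollary~\ref{ordering_cor}.
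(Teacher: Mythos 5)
Your argument is exactly the paper's: assume multiplication is definable, use Lagrange's four-square theorem to define the non-negative integers and hence the ordering, and contradict Corollary~\ref{ordering_cor}. The proposal is correct and matches the paper's proof, just with the details written out.
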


\begin{proof}
If multiplication on $\Z$ is definable, then the ordering of $\Z$ is also definable using Lagrange's four-square theorem. 
\end{proof}

\medskip\noindent
We need the following lemma to show that $\mathfrak Z$ is unstable.

\begin{lemma}
Let $n>0$, then there is $m>0$ such that $m,2m,\dots,nm\in P_r$ and $-m,-2m,\dots,-nm\notin P_r$.
\end{lemma}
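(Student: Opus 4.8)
The plan is to work in $\Gamma_r$ via the isomorphism $h$ and use the density of $\Gamma_r$ in $\S$ together with Proposition~\ref{kthpower}. The statement we want, translated through $h$, asks: for every $n>0$ there is $m>0$ such that $h(jm) = h(m)^j \in (h(-1),1)$ for all $j = 1,\dots,n$ and $h(-jm) = h(m)^{-j} \in [1,h(-1)]$ (i.e.\ not in the open interval $(h(-1),1)$) for all $j = 1,\dots,n$. Equivalently, writing $\beta = h(m)$, we need a point $\beta \in \Gamma_r$ whose first $n$ positive powers land in the fixed arc $(h(-1),1)$ and whose first $n$ negative powers avoid it. The natural guess is that any $\beta$ sufficiently close to $1$ on the correct side works: if $\beta \in (1, h(1))$ is close enough to $1$, then $\beta^j$ for $j=1,\dots,n$ stays in a small arc just counter-clockwise of $1$, which is inside $(h(-1),1)$, while $\beta^{-j}$ stays just clockwise of $1$, which is in $[1, h(-1)]$ (recalling $1 < h(-1)$ in the orientation, so the arc clockwise of $1$ is the complement of $(h(-1),1)$ near $1$).

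First I would make the arc estimate precise. The interval $(h(-1),1)$ is a half-open-looking arc; near the point $1$ it contains all points $e(x)$ with $-\varepsilon_0 < x < 0$ for some small $\varepsilon_0 > 0$ (coming from $h(-1) = e(-1/r)$, so $\varepsilon_0$ can be taken to be anything less than $1/r$, in fact we just need the arc from $h(-1)$ to $1$ has positive length $1/r$). Now pick $\beta = e(-\theta)$ with $\theta > 0$ small; since $\Gamma_r$ is dense in $\S$, I can choose such a $\beta$ in $\Gamma_r$ with $0 < \theta < \frac{1}{nr}$ (say). Then for $j = 1,\dots,n$ we have $\beta^j = e(-j\theta)$ with $0 < j\theta < \frac{1}{r}$, so $\beta^j \in (h(-1),1)$; this gives $jm \in P_r$ by \eqref{sturm-vs-beatty3} (or directly by the characterization of $P_r$ as $(h(-1),1)\cap\Gamma_r$). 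Conversely $\beta^{-j} = e(j\theta)$ with $0 < j\theta < \frac{1}{r} < 1$, and a point $e(y)$ with $0 < y < 1 - \frac{1}{r}$... wait, I need $e(j\theta) \notin (h(-1),1)$; since $(h(-1),1) = \{e(x) : -1/r < x < 0 \pmod 1\}$ and $0 < j\theta < 1$, the point $e(j\theta)$ is in this arc iff $j\theta > 1 - 1/r$, so I should additionally require $n\theta \le 1 - \frac{1}{r}$, i.e.\ choose $\theta < \min\{\frac{1}{nr}, \frac{1-1/r}{n}\}$ (both positive since $r > 1$). Such $\beta = e(-\theta) \in \Gamma_r$ exists by density. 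Setting $m = h^{-1}(\beta)$, which is a nonzero integer, completes the argument — though I may need to replace $m$ by $-m$ if $h^{-1}(\beta)$ comes out negative, noting that the conditions are symmetric under $m \mapsto -m$ only up to the $\pm 1$ shift of Lemma~\ref{negations}; more cleanly, I simply observe $h^{-1}(\beta)$ is some nonzero integer $m$, and if $m<0$ I instead run the whole argument with $\beta^{-1} = e(\theta)$ in place of $\beta$...

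Actually the cleanest packaging avoids case analysis: I would just note that the set $\{m \in \Z : m,2m,\dots,nm \in P_r,\ -m,\dots,-nm \notin P_r\}$ is, under $h$, the set of $\beta \in \Gamma_r$ with $\beta, \beta^2, \dots, \beta^n$ all in the arc $(h(-1),1)$ and $\beta^{-1},\dots,\beta^{-n}$ all outside it. The preceding paragraph exhibits a nonempty open arc $U \subseteq \S$ (namely a short arc $(e(-\theta_0), 1)$ for suitable $\theta_0 > 0$) all of whose points $\beta$ satisfy these conditions, using the orientation-multiplication compatibility to control $\beta^j$ and $\beta^{-j}$; and $U \cap \Gamma_r \ne \emptyset$ by density of $\Gamma_r$ in $\S$. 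Pulling back any element of $U \cap \Gamma_r \setminus \{1\}$ under $h$ yields the desired nonzero integer $m$.

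\textbf{Main obstacle.} The only real subtlety is bookkeeping with the half-open nature of the arc $(h(-1),1)$ and the sign conventions for the orientation ordering ($1 < h(-1)$, and "$\notin (h(-1),1)$" meaning the closed complementary arc), to be sure that the $n$ negative powers genuinely fall outside $P_r$ rather than on the boundary or just inside it; this is handled by the two explicit inequalities $0 < n\theta < \frac{1}{r}$ and $n\theta < 1 - \frac{1}{r}$ on the small parameter $\theta$, both satisfiable since $r>1$. Everything else is immediate from density of $\Gamma_r$ in $\S$ and the description $h(P_r) = (h(-1),1)\cap\Gamma_r$ established after \eqref{sturm-vs-beatty3}.
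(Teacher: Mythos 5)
Your construction is the same as the paper's: the paper sets $\alpha=\min\{h(1),h(-1)\}$, $\beta=\alpha^{1/(n+1)}$ and picks $m\in\N_+$ with $\beta^{-1}<h(m)$, i.e.\ it places $h(m)=e(-\theta)$ in exactly the short arc just clockwise of $1$ that you describe, with $\theta<\frac{1}{n+1}\min\{\tfrac1r,1-\tfrac1r\}$; your two inequalities $n\theta<\tfrac1r$ and $n\theta\le 1-\tfrac1r$ are the right ones, and the verification that $h(jm)\in(h(-1),1)$ while $h(-jm)\notin(h(-1),1)$ for $j=1,\dots,n$ is correct.

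The one genuine loose end is the sign of $m$. The lemma asserts $m>0$, and your final packaging delivers only a \emph{nonzero} $m$: density of $\Gamma_r=h(\Z)$ in $\S$ guarantees some $m\in\Z\setminus\{0\}$ with $h(m)\in U$, not a positive one. Your proposed repair --- ``if $m<0$ run the argument with $\beta^{-1}=e(\theta)$'' --- cannot work: the conditions are deliberately \emph{not} symmetric under $m\mapsto -m$ (they force $m\in P_r$ and $-m\notin P_r$), and $e(\theta)$ with $0<\theta<1-\tfrac1r$ lies outside $(h(-1),1)$, so its positive multiples fail the membership requirement. The correct fix is to use the stronger (and standard) fact that already the forward orbit $\{e(m/r):m\in\N_+\}$ is dense in $\S$ (equivalently, $\{m/r \bmod 1 : m>0\}$ is dense in $[0,1)$ since $1/r$ is irrational), so the arc $U$ meets $\{h(m):m>0\}$; this is what the paper is implicitly invoking when it writes ``take $m\in\N_+$ such that $\beta^{-1}<h(m)$''. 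With that substitution your argument is complete. (For the downstream use --- unstability of $T_r^*$ via $\phi(x;y)\equiv y-x\in P$ --- only $m\neq 0$ is actually needed, but as a proof of the stated lemma the positivity must be secured.)
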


\begin{proof}
Let $\alpha=\min\{h(1),h(-1)\}$ and $\beta=\alpha^{\frac{1}{n+1}}$. By regular density of $\Gamma_r$ take $m\in\N_+$ such that $\beta^{-1}<h(m)$. Clearly, this $m$ satisfies the conclusion of the lemma.
\end{proof}

\begin{proposition}
The theory $T_{r}^*$ is not stable.
\end{proposition}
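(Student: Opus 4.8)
The plan is to exhibit a formula with the order property, using the lemma just proved. Recall that a theory is unstable as soon as some formula $\varphi(\vec x;\vec y)$ has, in some model, an infinite sequence $(\vec a_i)_{i<\omega}$, $(\vec b_j)_{j<\omega}$ with $\models\varphi(\vec a_i;\vec b_j)\iff i<j$. Since $\mathfrak Z$ is a model of $T_r^*$, it suffices to produce such a configuration inside $\mathfrak Z$ (equivalently, inside $\mathfrak G$, where the interval picture is cleaner).

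The natural candidate comes directly from the lemma: consider the $L_P$-formula $\varphi(x;y)$ expressing ``$x+y\in P$'' — or, transported to $\mathfrak G$, the relation ``$\alpha\beta\in P_r=(h(-1),1)\cap\Gamma_r$''. First I would show this already has the order property. Using the lemma, for each $n>0$ pick $m_n>0$ with $m_n,2m_n,\dots,nm_n\in P_r$ and $-m_n,-2m_n,\dots,-nm_n\notin P_r$; more efficiently, by regular density of $\Gamma_r$ one can choose a single $m>0$ with $h(m)$ arbitrarily close to $1$ from the $P_r$ side, and then the consecutive multiples $h(m),h(2m),h(3m),\dots$ march slowly counterclockwise starting just inside $(h(-1),1)$. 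Set $a_i=im$ for $i<\omega$. The point is that $h(a_i)$ stays in the interval $(h(-1),1)$ for a long stretch of $i$'s and then leaves it; by choosing $m$ appropriately (length of the gap controlled via regular density) one arranges, for a prescribed finite $k$, that $h(a_i)\in(h(-1),1)$ exactly for $i\le k$. To get a genuine infinite half-graph rather than a finite approximation, I would instead work with $\varphi(x;y):\ x+y\in P$ and translate: put $a_i=im$ and $b_j=-(j)m$ shifted so that $a_i+b_j=(i-j)m$; then $a_i+b_j\in P_r$ should hold precisely when $i-j$ lies in a suitable one-sided window. A cleaner route is to pass to an $\aleph_0$-saturated model $\Cal M\models T_r^*$: by saturation and the lemma (which is a scheme of first-order sentences, true in $\mathfrak Z$ hence in $\Cal M$), there is a single $m\in M$ all of whose positive multiples $m,2m,3m,\dots$ (standard multiples) lie in $P$ and all of whose negative multiples lie outside $P$. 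Then with $a_i = im$ and $\varphi(x;y) : x+y \in P \wedge x - y \in P$, or more simply tracking a one-sided condition, the pairs $(a_i,a_j)$ realize a half-graph pattern.

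Concretely, here is the version I would write up. In an $\aleph_0$-saturated $\Cal M\models T_r^*$, choose, for every $n$, elements $m_n$ as in the lemma; by compactness/saturation get $m\in M$ with $km\in P(\Cal M)$ and $-km\notin P(\Cal M)$ for all standard $k\ge 1$. Put $a_i := im\in M$ for $i\in\N$. Take $\varphi(x;y)$ to be the $L_P$-formula $y-x\in P$. Then $\varphi(a_i;a_j)$ says $(j-i)m\in P(\Cal M)$, which holds iff $j-i\ge 1$, i.e.\ iff $i<j$. So $(a_i)_{i<\omega}$ witnesses the order property for $\varphi$ in $\Cal M$, and therefore $T_r^*$ is not stable.

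The main obstacle — really the only subtlety — is the passage from the lemma (which for each fixed $n$ gives an $m_n$ working for multiples up to $n$) to a single element $m$ in a saturated model working for \emph{all} standard multiples simultaneously. This is exactly where $\aleph_0$-saturation is used: the partial type $\{\,km\in P : k\ge 1\,\}\cup\{\,-km\notin P : k\ge 1\,\}$ in the variable $m$ is finitely satisfiable in $\mathfrak Z$ by the lemma, hence realized in $\Cal M$. Everything else is bookkeeping: checking $a_i=im$ are distinct (immediate since $m$ has infinite order, as $M\equiv\Z$ is torsion-free) and that $y-x\in P$ is literally one of our formulas, so no extra work is needed to see $\varphi$ is an $L_P$-formula. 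One should also note that instability of $T_r^*$ is consistent with the dp-minimality (hence NIP) mentioned earlier for the interdefinable structure $(\Gamma_r,\cdot,\Cal O)$ — unstable NIP theories are exactly the expected outcome here, so there is no tension.
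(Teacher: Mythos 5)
Your proof is correct and follows essentially the same route as the paper: the same formula $y-x\in P$ and the same witnesses $a_i=im$ obtained from the preceding lemma. The only cosmetic difference is that you pass to an $\aleph_0$-saturated model to realize an infinite half-graph, whereas the paper simply invokes the standard fact that exhibiting, for each $n$, a length-$n$ order configuration in $\mathfrak Z$ already shows the formula is unstable.
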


\begin{proof}
Let $\phi(x;y)$ be the $L_P$-formula $y-x\in P$. We show that this formula is unstable. For this, it suffices to prove the following: For every $n>0$, there are $a_1,\dots,a_n\in\Z$ such that 
 \[
 \mathfrak Z\models \phi(a_i,a_j)\Longleftrightarrow i<j.
 \]
 So let $n>0$ be given. Take $a_i=im$ where $m=m(n)$ is as in the lemma above.  Now $a_j-a_i=(j-i)m$ and hence $\phi(a_i,a_j)$ holds in $\mathfrak Z$ if and only if $i<j$.
 
\end{proof}
\noindent (We would like to thank Haydar G\"oral for the idea of this proof.)

\section{No Reducts}\label{no_reducts_section}
We prove that there are no intermediate structures between $(\Z,+)$ and $(\Z,+,P_r)$; actually we prove the same result for $(\Gamma_r,\cdot)$ and $(\Gamma_r,\cdot,P_r)$. 

\medskip\noindent
We first consider a subset $X$ of $\Gamma_r$ that is definable in $\mathfrak G$. 

\begin{proposition}\label{def->convex}
Let $X\subseteq\Gamma_r$ be definable in $\mathfrak G$. Suppose that $X$ is not definable in $(\Gamma_r,\cdot)$. Then there is $N>0$ and $L\in\{0,1,\dots,N-1\}$ such that both the set $(h(L)X)^{1/N}$ and its complement contain a convex set. \end{proposition}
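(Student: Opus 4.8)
The plan is to run the structure theory already developed for unary definable sets and squeeze out the required convex piece from the hypothesis that $X$ is \emph{not} definable in $(\Gamma_r,\cdot)$. First I would invoke Corollary~\ref{def->convex1} to obtain $N\in\N_+$ so that for every $n\in\{0,1,\dots,N-1\}$ the set $(h(n)X)^{1/N}$ is a finite union of convex sets and singletons, with the end points of those convex sets lying in $\<\Gamma_r\>$. By shrinking/refining (replacing $N$ by a suitable multiple) I may assume the decomposition is ``clean'': for each $n$ write $(h(n)X)^{1/N}=C_n\cup F_n$ where $C_n$ is a finite union of pairwise disjoint maximal convex sets and $F_n$ is a finite set disjoint from $C_n$.

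The heart of the argument is the dichotomy: either \emph{every} $C_n$ is empty — in which case each $(h(n)X)^{1/N}$ is finite, hence $X$ itself is finite and therefore trivially definable in $(\Gamma_r,\cdot)$ (singletons are definable in the pure group since $h(1)$ is a named generator of $\Gamma_r$), contradicting the hypothesis — or some $C_L$ is nonempty. So fix $L$ with $C_L$ containing a convex set; this already gives half of the conclusion for this value of $L$ (after absorbing the finite set $F_L$, which cannot fill up a convex set). For the other half I must show the complement of $(h(L)X)^{1/N}$ also contains a convex set. Suppose not: then $(h(L)X)^{1/N}$ is cofinite-in-intervals, i.e.\ $\Gamma_r\setminus(h(L)X)^{1/N}$ is a finite union of convex sets and singletons with \emph{no} convex part, hence finite; equivalently $(h(L)X)^{1/N}=\Gamma_r\setminus F$ for some finite $F$ with $F\subseteq\<\Gamma_r\>$. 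I would then argue this forces $X$ to be definable in $(\Gamma_r,\cdot)$: using the last part of Lemma~\ref{union_of_powers}, $X=\bigcup_{n=0}^{N-1}h(-n)\big((h(n)X)^{1/N}\big)^{(N)}$, and each $(h(n)X)^{1/N}$ is now shown to be a finite Boolean combination of a cofinite set, some convex sets, and finitely many points whose ``shape'' is controlled only by cosets of roots of unity and named elements — so in fact co-finite up to a finite set, and raising to the $N$-th power and translating by the named element $h(-n)$ keeps us inside the definable closure of $(\Gamma_r,\cdot)$ (finite sets of points in $\<\Gamma_r\>$ together with their $\zeta_N$-cosets are $(\Gamma_r,\cdot)$-definable, since $\<\Gamma_r\>\cap\Gamma_r=\Gamma_r$ and roots of unity in $\Gamma_r$ are definable in the pure group). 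This contradicts the hypothesis, so the complement must contain a convex set, completing the proof.

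The main obstacle I expect is the bookkeeping in the last step: showing rigorously that ``$(h(L)X)^{1/N}$ is cofinite for the single index $L$'' propagates to ``$(h(n)X)^{1/N}$ is, for every $n$, a finite Boolean combination of one cofinite set and finitely many points/root-of-unity-cosets,'' so that $X$ lands in the reduct $(\Gamma_r,\cdot)$. The subtlety is that different indices $n$ need not behave alike a priori; the fix is to note that $X$ not being definable in $(\Gamma_r,\cdot)$ is a statement about $X$ as a whole, so it suffices to derive a contradiction from the \emph{one} bad index $L$ together with the general form of the decomposition, and for that one only needs that the complement of $(h(L)X)^{1/N}$ being finite already makes $h(-L)\big((h(L)X)^{1/N}\big)^{(N)}$ — and, by the same token applied relative to $L$, a correspondingly structured contribution from the other indices — expressible purely group-theoretically. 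Care with the end points lying in $\<\Gamma_r\>$ rather than $\Gamma_r$ is needed, but those points are $N$-th roots of elements of $\Gamma_r$, hence handled by adjoining the (finitely many, definable) $N$-th roots of unity. I would organize this as: (i) clean decomposition; (ii) the all-empty case; (iii) pick $L$; (iv) assume complement has no convex part and derive $(\Gamma_r,\cdot)$-definability of $X$; (v) conclude.
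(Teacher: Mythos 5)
Your overall strategy (apply Corollary~\ref{def->convex1}, decompose $X$ over the cosets of $\Gamma_r^{(N)}$ via Lemma~\ref{union_of_powers}, and play the finite-union-of-convex-sets structure against non-definability in the pure group) is the right one, and it is the paper's. But there is a genuine gap in how you choose $L$. You first fix $L$ by the criterion that $(h(L)X)^{1/N}$ contains a convex set, and then, assuming its complement contains none (so that $(h(L)X)^{1/N}$ is cofinite), you try to conclude that $X$ is definable in $(\Gamma_r,\cdot)$. That conclusion does not follow from information about the single index $L$: nothing forces the other pieces $(h(n)X)^{1/N}$, $n\neq L$, to be finite or cofinite. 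For instance, with $N=2$ take $X=\Gamma_r^{(2)}\cup h(-1)\big(C\cap\Gamma_r^{(2)}\big)$ for a proper convex set $C$; then $(h(0)X)^{1/2}=\Gamma_r$ contains a convex set and has empty complement, yet $X$ is not definable in $(\Gamma_r,\cdot)$ --- the obstruction lives entirely at the index $n=1$, where your chosen $L=0$ sees nothing. Your proposed fix in the last paragraph (``by the same token applied relative to $L$, a correspondingly structured contribution from the other indices'') is exactly the unjustified step: cofiniteness of the $L$-th piece says nothing about the shape of the others, so no contradiction with the hypothesis on $X$ can be extracted from that one index.

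The repair is to reverse the order of quantification, which is what the paper does. Suppose that for \emph{every} $n\in\{0,\dots,N-1\}$ at least one of $(h(n)X)^{1/N}$ and its complement contains no convex set. Each of these two sets is a finite union of convex sets and singletons, and a nonempty convex set is infinite by density, so ``contains no convex set'' means ``finite''; hence each $(h(n)X)^{1/N}$, and therefore each $h(n)X\cap\Gamma_r^{(N)}$, is finite or cofinite in its coset. Finite sets, their complements, and the cosets of $\Gamma_r^{(N)}$ are all definable in $(\Gamma_r,\cdot)$, so the decomposition $X=\bigcup_{n=0}^{N-1}h(-n)\big(h(n)X\cap\Gamma_r^{(N)}\big)$ makes $X$ definable in $(\Gamma_r,\cdot)$, a contradiction. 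Hence there is some $L$ for which both $(h(L)X)^{1/N}$ and its complement are infinite, and then both contain a convex set. Your step (ii) (the all-finite case) and your observation that removing finitely many points from a convex set leaves a convex set are fine; only the selection of $L$ must be made globally rather than greedily.
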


\begin{proof}
By Corollary~\ref{def->convex1}, there is $N>0$ such that for each $n$, the set $(h(n)X)^{1/N}$ is a finite union of convex sets with end points in $\<\Gamma_r\>$, and singletons. Write 
    \[
    X=\bigcup_{n=0}^{N-1}h(-n)(h(n)X\cap\Gamma_r^{(N)}).
    \]
For each $n$, raising to power $N$ is a bijection between the sets $(h(n)X)^{1/N}$ and $h(n)X\cap\Gamma_r^{(N)}$.  Since $X$ is not definable in $(\Gamma_r,\cdot)$, there is $L\in\{0,\dots,N-1\}$  such that $(h(L)X)^{1/N}$ is infinite. If  $(h(L)X)^{1/N}$ is cofinite in $\Gamma_r$, then so is $h(L)X\cap\Gamma_r^{(N)}$. Once again, since $X$ is not definable in $(\Gamma_r,\cdot)$, $h(L)X\cap\Gamma_r^{(N)}$ cannot be cofinite in $\Gamma_r$. Therefore both $(h(L)X)^{1/N}$ and its complement contains a convex set.
\end{proof}

\medskip\noindent
So if $N$ and $L$ are as in this proposition, both $(h(L)X)^{1/N}$ and its complement are finite unions of convex sets and singletons, but neither is finite. Moreover, by Corollary~\ref{def->convex1}, the end points of the convex sets appearing in either union are in $\<\Gamma_r\>$. Therefore there is a finite union of convex sets with end points in $\<\Gamma_r\>$ that is definable in $(\Gamma_r,\cdot,X)$ and is not cofinite in $\Gamma_r$.

\begin{lemma}\label{powers_to_intervals}
Let $X=(\alpha,\beta)\cap\Gamma_r^{(m)}$ and suppose that $\gamma:=\alpha\beta\in\Gamma_r$. 
Then a convex set is definable in $(\Gamma_r,\cdot,X)$. Moreover, the end points of that convex set are in $\<\Gamma_r\cup\{\alpha,\beta\}\>$.
\end{lemma}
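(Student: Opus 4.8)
The plan is to reduce the problem to a situation where Lemma~\ref{convex_lemma} applies, exploiting the hypothesis $\gamma=\alpha\beta\in\Gamma_r$. First I would record what $X$ looks like: $X=(\alpha,\beta)\cap\Gamma_r^{(m)}$ consists of $m$-th powers lying in a fixed interval. Since raising to the $m$-th power interacts with intervals via Proposition~\ref{kthpower}, the set $X^{1/m}=(\alpha,\beta)^{1/m}$ is itself a finite union of intervals of $\Gamma_r$ (translated by $m$-th roots of unity), and $X$ is recovered from $X^{1/m}$ by the map $y\mapsto y^m$. The key observation is that the hypothesis $\alpha\beta\in\Gamma_r$ lets us pass from $X$ to its ``reflection'': the inversion map $z\mapsto\gamma z^{-1}$ sends the interval $(\alpha,\beta)$ to $(\alpha,\beta)$ again (because inverting reverses orientation, $(\alpha,\beta)\mapsto(\beta^{-1},\alpha^{-1})$, and multiplying by $\gamma=\alpha\beta$ brings this back to $(\alpha,\beta)$). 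Composing with multiplication by an element of $\Gamma_r^{(m)}$, one gets from $X$ a translate of the form $(\alpha\gamma_1,\beta\gamma_2)\cap\Gamma_r^{(m)}$, which by Lemma~\ref{convex_lemma} is already definable in $(\Gamma_r,\cdot,X)$.

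Concretely, the steps I would carry out are: (1) note that the group operation and $X$ are available, so for any $\gamma_1,\gamma_2\in\Gamma_r^{(m)}$ the set $(\alpha\gamma_1,\beta\gamma_2)\cap\Gamma_r^{(m)}$ is definable in $(\Gamma_r,\cdot,X)$ by Lemma~\ref{convex_lemma}; (2) form the set $Y:=\{z\in\Gamma_r : \gamma z^{-1}\in X\}$, which is definable in $(\Gamma_r,\cdot,X)$ since $\gamma\in\Gamma_r$ is a parameter and inversion is in the language, and check $Y=(\beta^{-1}\gamma,\alpha^{-1}\gamma)\cap(\gamma\Gamma_r^{(m)})=(\alpha,\beta)\cap\gamma\Gamma_r^{(m)}$ using $\gamma=\alpha\beta$ and the orientation-reversal identity for inversion; (3) intersect $X$ with a suitable translate $\delta Y$ (with $\delta\in\Gamma_r^{(m)}$ chosen so that the cosets $\Gamma_r^{(m)}$ and $\delta\gamma\Gamma_r^{(m)}$ coincide, i.e.\ $\delta\gamma\in\Gamma_r^{(m)}$ — possible precisely when $\gamma\in\Gamma_r^{(m)}\cdot\Gamma_r^{(m)}$, which is automatic since $\Gamma_r^{(m)}$ is a subgroup) to land inside a single coset of $\Gamma_r^{(m)}$, on which the $m$-th power map is a bijection onto a genuine interval of $\Gamma_r$. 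Pulling back along that bijection — which is an $(\Gamma_r,\cdot)$-definable operation on the relevant coset — yields a convex set, and tracking the end points through the identities in steps (2)--(3) shows they lie in $\<\Gamma_r\cup\{\alpha,\beta\}\>$, since the only new quantities introduced are $m$-th roots of $\alpha$, $\beta$, and elements of $\Gamma_r$.

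The main obstacle I expect is the bookkeeping with cosets of $\Gamma_r^{(m)}$ in step (3): $X$ lives in one coset and the reflected/translated copy $Y$ lives in a possibly different coset, so one must be careful that after multiplying by an appropriate $\delta$ the two intervals overlap inside a common coset before taking the intersection, and that the intersection is still a ``full'' convex piece rather than getting thinned out. A secondary subtlety is making sure the resulting convex set is genuinely nonempty and convex (not a union of several arcs): this is where one uses that $\alpha$, $\beta$ are actual end points of a single interval, so after the reflection-and-translation the two intervals $(\alpha,\beta)$ and its shifted copy are short enough (or can be chosen short enough, as in the proof of Lemma~\ref{convex_lemma}) that their intersection is again a single interval. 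Once these coset computations are set up correctly, the definability follows formally from Lemma~\ref{convex_lemma} and the closure properties of definable sets, and the end-point claim follows by inspection.
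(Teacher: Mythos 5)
Your step (2) is the right germ of the argument and coincides with the paper's key device: since $\gamma=\alpha\beta\in\Gamma_r$, the reflection $z\mapsto\gamma z^{-1}$ fixes the arc $(\alpha,\beta)$ setwise while shifting the coset of $\Gamma_r^{(m)}$ by $\gamma$, so $Y=(\alpha,\beta)\cap\gamma\Gamma_r^{(m)}$ is definable from $X$. But step (3) breaks down in two places. First, the coset arithmetic: if $\delta\in\Gamma_r^{(m)}$ then $\delta Y\subseteq\delta\gamma\Gamma_r^{(m)}=\gamma\Gamma_r^{(m)}$, so $X\cap\delta Y\subseteq\Gamma_r^{(m)}\cap\gamma\Gamma_r^{(m)}$ is empty unless $\gamma\in\Gamma_r^{(m)}$; your parenthetical ``possible precisely when $\gamma\in\Gamma_r^{(m)}\cdot\Gamma_r^{(m)}$, which is automatic'' is a non sequitur, because $\Gamma_r^{(m)}\cdot\Gamma_r^{(m)}=\Gamma_r^{(m)}$ and $\gamma$ is only assumed to lie in $\Gamma_r$. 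Second, and more fundamentally, the closing move ``pull back along the $m$-th power bijection to get a convex set'' fails: although $y\mapsto y^m$ is a bijection $\Gamma_r\to\Gamma_r^{(m)}$, it wraps the circle $m$ times, so the preimage of $(\alpha,\beta)\cap\Gamma_r^{(m)}$ is the union of the $m$ arcs $\zeta_m^s(\alpha^{1/m},\beta^{1/m})\cap\Gamma_r$ (Corollary~\ref{kthroot1}), not a single convex set, and nothing in your construction isolates one of these arcs. That difficulty is precisely what the lemma exists to overcome, and you cannot outsource it to Lemma~\ref{convex_to_interval}, whose Case 3 cites the present lemma.

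The missing idea is to run the reflection in the opposite direction: instead of intersecting down to one coset and then extracting roots, take a union $X\cup f(X)\cup f^2(X)\cup\cdots$ of reflected translates so as to fill in all $m$ cosets of $\Gamma_r^{(m)}$ inside one fixed arc; the union is then an honest convex subset of $\Gamma_r$ and no root extraction is needed at the end. To make this work one must first perturb an endpoint via Lemma~\ref{convex_lemma} so that the (suitably rooted) product of the new endpoints generates $\Gamma_r/\Gamma_r^{(m)}$, and one must treat separately the degenerate case where $\gamma\in\Gamma_r^{(m)}$, in which the reflection does not change the coset at all (this is the paper's $n=0$ case, handled by passing to $m^k$-th roots). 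Your plan anticipates neither point.
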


\begin{proof}

\medskip\noindent
Let $0\leq n<m$ be such that $\gamma\in h(n)\Gamma_r^{(m)}$. 

\medskip\noindent
We first consider the case that $n\neq 0$. Let $k:=\gcd(n,m)$, $n'=n/k$, and $m'=m/k$. So $\gamma=h(n)\gamma_0^m=(h(n')\gamma_0^{m'})^k$ for some $\gamma_0\in\Gamma_r$. 

\medskip\noindent
Let $l\in\Z$ be such that $h(n')\gamma_0^{m'}\in(\zeta_k^l,\zeta_k^{l+1})$. Also let 
\[\delta_0\in((\beta^{1/k})^{-1}\zeta_k^{-l},(\beta^{1/k})^{-1}\zeta_k^{-l+1})\cap\gamma_0^{-m'}\Gamma_r^{(m)}\text{, and }\delta=\delta_0^k.\]
By Lemma~\ref{convex_lemma}, the set $Y:=(\alpha,\beta\delta)\cap\Gamma_r^{(m)}$ is definable in $(\Gamma_r,\cdot,X)$ and using Lemma~\ref{kthroot_roots_of_unity} we have that $\gamma_Y:=\alpha^{1/k}(\beta\delta)^{1/k}\in h(n')\Gamma_r^{(m)}$. In particular, $\{1,\gamma_Y,\gamma_Y^2,\dots,\gamma_Y^{m-1}\}$ is a full set of representatives of cosets of $\Gamma_r^{(m)}$ in $\Gamma_r$.

\medskip\noindent
By Proposition~\ref{kthpower} 
\[
Y^{1/k}=\bigcup_{s=0}^{k-1}\zeta_k^s(\alpha^{1/k},(\beta\delta)^{1/k})\cap\Gamma_r^{(m')}.
\]
Take $\tau_1,\tau_2\in\Gamma_r^{(m)}$ such that $(\alpha^{1/k},(\beta\delta)^{1/k})\subseteq(\alpha\tau_1,\beta\tau_2)$ and $l((\alpha\tau_1,\beta\tau_2))<\zeta_k$. The set $ (\alpha\tau_1,\beta\tau_2)\cap\Gamma_r^{(m)}$ is definable in $(\Gamma_r,\cdot,X)$ by Lemma~\ref{convex_lemma} and hence
\[
Z:=((\alpha\tau_1,\beta\tau_2)\cap\Gamma_r^{(m)})\cap Y^{1/k}=(\alpha^{1/k},(\beta\delta)^{1/k})\cap\Gamma_r^{(m)}.
\]
is also definable in $(\Gamma_r,\cdot,X)$.

\medskip\noindent
Let $f:\Gamma_r\to\Gamma_r$ be defined as $f(x)=x^{-1}\gamma_Y$. It is clear that 
\[x\in(\alpha^{1/k},(\beta\delta)^{1/k})\Longleftrightarrow f(x)\in(\alpha^{1/k},(\beta\delta)^{1/k}).\]
Therefore $f^i(Z)=(\alpha^{1/k},(\beta\delta)^{1/k})\cap\gamma_Y^i\Gamma^{(m)}$ for $i=0,1,\dots,m-1$.  Hence the proper convex set
 \[
(\alpha^{1/k},(\beta\delta)^{1/k})\cap\Gamma_r=Z\cup f(Z)\cup\dots\cup f^{m-1}(Z)
 \]
is definable in $(\Gamma_r,\cdot,X)$.

\medskip\noindent
Now let $n=0$ and take $k>0$ maximum such that $\gamma\in\Gamma_r^{(m^k)}$; say $\gamma=\gamma_0^{m^k}$ with $\gamma_0\in(\zeta_{m^k}^l,\zeta_{m^k}^{l+1})\cap\Gamma_r$.

\medskip\noindent
Let 
 \[\delta_0\in((\beta^{1/m^k})^{-1}\zeta_{m^k}^{-l},(\beta^{1/m^k})^{-1}\zeta_{m^k}^{-l+1})\cap(\Gamma_r\setminus \gamma_0^{-1}\Gamma_r^{(m)})\text{ and }\delta:=\delta_0^{m^k}.\]
As in the previous case, we have $Y=(\alpha,\beta\delta)\cap\Gamma_r^{(m)}$ is definable in $(\Gamma_r,\cdot,X)$ and 
\[
Y^{1/m^k}=\bigcup_{s=0}^{m^k-1}\zeta_{m^k}^s(\alpha^{1/m^k},(\beta\delta)^{1/m^k})\cap\Gamma_r.
\]
Again, there are $\tau_1,\tau_2\in\Gamma^{(m)}$ such that
\[
Z:=((\alpha\tau_1,\beta\tau_2)\cap\Gamma_r^{(m)})\cap Y^{1/m^k}=(\alpha^{1/m^k},(\beta\delta)^{1/m^k})\cap\Gamma_r^{(m)}.
\]
Now we have $\gamma^*:=\alpha^{1/m^k}(\beta\delta)^{1/m^k}=\gamma_0\delta_0$ is not in $\Gamma_r^{(m)}$. Therefore using the previous case a proper convex set is definable in $(\Gamma_r,\cdot,Z)$, hence in $(\Gamma_r,\cdot,X)$.

\medskip\noindent
It follows from the proof that the end points of the proper convex set are in $\<\Gamma_r\cup\{\alpha,\beta\}\>$.

\end{proof}

%
%
%

\begin{lemma}\label{powers_to_interval}
Let $X=(\alpha,\beta)\cap\Gamma_r$ and $m>0$ with $l(\alpha,\beta)<\zeta_m$. Then for  $0<k\leq m$:
 \[
\{x_1x_2\cdots x_k: x_1,x_2,\dots,x_k\in X\}= (\alpha^k,\beta^k)\cap\Gamma_r
 \]
\end{lemma}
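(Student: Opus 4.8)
The plan is to prove the two inclusions separately. For the inclusion ``$\subseteq$'', suppose $x_1,\dots,x_k\in X=(\alpha,\beta)\cap\Gamma_r$. Since each $x_i$ lies in the interval $(\alpha,\beta)$ and the total ``spread'' is controlled by the hypothesis $l(\alpha,\beta)<\zeta_m$, I would lift everything to $\R$ via $e$: choose a real $a$ with $\alpha=e(a)$, and reals $x_i=e(t_i)$ with $a<t_i<a+\theta$ where $\theta\in(0,\tfrac1m)$ is the real number with $\beta=e(a+\theta)$. Then $ka<t_1+\dots+t_k<ka+k\theta\le ka+m\theta<ka+1$, so applying $e$ gives $x_1\cdots x_k=e(t_1+\dots+t_k)\in(e(ka),e(ka+k\theta))=(\alpha^k,\beta^k)$; and of course $x_1\cdots x_k\in\Gamma_r$ since $\Gamma_r$ is a group. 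The condition $k\theta<1$, which keeps $(\alpha^k,\beta^k)$ a genuine (non-wraparound) interval, is exactly what $0<k\le m$ and $\theta<\tfrac1m$ buy us.

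For the reverse inclusion ``$\supseteq$'', let $\gamma\in(\alpha^k,\beta^k)\cap\Gamma_r$; I want to write $\gamma$ as a product of $k$ elements of $X$. Again lift: pick $a,\theta$ as above and pick a real $u$ with $\gamma=e(u)$ and $ka<u<ka+k\theta$. Set $v:=u/k$, so $a<v<a+\theta$, and put $x_1=\dots=x_{k-1}=e(v')$ for a suitable $v'$ close to $v$ and $x_k:=\gamma\,(x_1\cdots x_{k-1})^{-1}$. The subtlety is that I need all $k$ factors to land in $\Gamma_r\cap(\alpha,\beta)$, not just in the interval. Here I would use that $\Gamma_r$ is dense (indeed regularly dense) in $\S$: choose $x_1=\dots=x_{k-1}\in\Gamma_r\cap(\alpha,\beta)$ with argument so close to $v$ that the ``remainder'' $x_k=\gamma(x_1\cdots x_{k-1})^{-1}=e(u-(k-1)v'')$ still has argument strictly between $a$ and $a+\theta$ modulo the spread constraint — concretely, pick $v''\in(\mathbb{Q}$-dense image$)$ with $|v''-v|$ small enough that $u-(k-1)v''\in(a,a+\theta)$, which is possible because at $v''=v$ one has $u-(k-1)v=v\in(a,a+\theta)$ and the condition is open. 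Then $x_k\in\Gamma_r$ automatically (it is a product of elements of $\Gamma_r$ and $\gamma\in\Gamma_r$), and $x_k\in(\alpha,\beta)$ by construction, so $\gamma=x_1\cdots x_k$ exhibits the desired form.

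The main obstacle is the ``$\supseteq$'' direction: one must simultaneously place $k-1$ free factors in $\Gamma_r\cap(\alpha,\beta)$ and force the forced last factor $x_k$ also into $\Gamma_r\cap(\alpha,\beta)$. Membership of $x_k$ in $\Gamma_r$ is free, but membership in the interval requires an open-condition/density argument as sketched; this is where the precise bound $l(\alpha,\beta)<\zeta_m$ together with $k\le m$ is essential, since it guarantees that the target sum $u$ and the natural ``barycentre'' choice $v=u/k$ both sit inside the correct arcs without wraparound. Everything else is the routine lifting to $\R$ and pushing back down via $e$, using that $\Gamma_r$ is a dense subgroup of $\S$.
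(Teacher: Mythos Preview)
Your argument is correct. The ``$\subseteq$'' direction is exactly what the paper dismisses as ``clear from the assumptions'', and your explicit lifting to $\R$ makes that precise. For ``$\supseteq$'' your barycentre-plus-density idea works: at $v''=v=u/k$ both open conditions $v''\in(a,a+\theta)$ and $u-(k-1)v''\in(a,a+\theta)$ hold, so they hold on a neighbourhood of $v$, and density of $\Gamma_r$ in $\S$ lets you pick $v''$ there with $e(v'')\in\Gamma_r$.

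The paper takes a different route for ``$\supseteq$'': it argues by induction on $k$, writing $Y_k=\bigcup_{x_k\in X}Y_{k-1}\,x_k$, using the inductive hypothesis $Y_{k-1}=(\alpha^{k-1},\beta^{k-1})\cap\Gamma_r$, observing that the resulting union of translated intervals is itself convex, and identifying it as $(\alpha^k,\beta^k)\cap\Gamma_r$ since $\alpha^k$ and $\beta^k$ are its limit points in $\S$. Your approach is a one-shot construction that avoids induction and makes the role of density of $\Gamma_r$ completely explicit; the paper's is slicker and avoids the $\R$-lifting bookkeeping, but hides the density input inside the phrase ``the last union is a convex set''. Both are short and either would serve.
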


\begin{proof}
Let
 \[
 Y_k:=\{x_1x_2\cdots x_k: x_1,x_2,\dots,x_k\in X\}.
 \]
It is clear from the assumptions that $Y_k\subseteq (\alpha^k,\beta^k)\cap\Gamma_r$.  We prove equality by induction on $k$. 

\medskip\noindent
The case $k=1$ is trivial, so let $k>1$ and suppose that $Y_{k-1}=(\alpha^{k-1},\beta^{k-1})\cap\Gamma_r$. Then 
 \begin{align*}
 Y_k&=\bigcup_{x_k\in X}Y_{k-1}\, x_k\\
    &=\bigcup_{x_k\in X}((\alpha^{k-1},\beta^{k-1})\cap\Gamma_r)x_k\\
    &=\bigcup_{x_k\in X}(x_k\alpha^{k-1},x_k\beta^{k-1})\cap\Gamma_r.
 \end{align*}
Clearly, the last union is a convex set. Since $\alpha^k$ and $\beta^k$ are limit points of $Y_k$ (in $\S$), we get 
 \[
Y_k=(\alpha^k,\beta^k)\cap\Gamma_r.
 \]
\end{proof}

\begin{corollary}\label{powers_to_interval_cor}
Let $X=(\alpha,\beta)\cap\Gamma_r$, where $\alpha,\beta\in\<\Gamma_r\>$. Then an interval is definable in $(\Gamma_r,\cdot,X)$.
\end{corollary}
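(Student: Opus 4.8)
The plan is to shrink $X$ by a single $\Gamma_r$-translation (permitted by Lemma~\ref{convex_lemma} with $m=1$) until its length is small enough to apply Lemma~\ref{powers_to_interval}, and then take iterated products to push the endpoints back into $\Gamma_r$.

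We may assume $\alpha\neq\beta$, so that $X$ is a nonempty proper convex set. Since $\alpha,\beta\in\<\Gamma_r\>$, fix $m_0\in\N_+$ with $\alpha^{m_0},\beta^{m_0}\in\Gamma_r$. The coset $(\beta\alpha^{-1})\Gamma_r$ is dense in $\S$, so there is $\gamma\in\Gamma_r$ with $l\big((\alpha,\beta\gamma)\big)=\beta\gamma\alpha^{-1}\in(1,\zeta_{m_0})$; then $X':=(\alpha,\beta\gamma)\cap\Gamma_r$ is a nonempty proper convex set of length less than $\zeta_{m_0}$, and by Lemma~\ref{convex_lemma} (case $m=1$) it is definable in $(\Gamma_r,\cdot,X)$. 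The point to stress is that translating the right endpoint by $\gamma\in\Gamma_r$ shrinks the interval while preserving the property we need: $(\beta\gamma)^{m_0}=\beta^{m_0}\gamma^{m_0}\in\Gamma_r$.

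Now apply Lemma~\ref{powers_to_interval} to $X'$ with $m=m_0$ and $k=m_0$: the set $\{x_1x_2\cdots x_{m_0}:x_1,\dots,x_{m_0}\in X'\}$ equals $(\alpha^{m_0},(\beta\gamma)^{m_0})\cap\Gamma_r$. The left-hand side is plainly definable in $(\Gamma_r,\cdot,X')$, hence in $(\Gamma_r,\cdot,X)$; and since $\alpha^{m_0}\in\Gamma_r$ and $(\beta\gamma)^{m_0}\in\Gamma_r$, while $l(X')<\zeta_{m_0}$ forces the $m_0$-th power arc to be nonempty and proper, the right-hand side is an interval of $\Gamma_r$. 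This gives the corollary. The only real subtlety is matching exponents: Lemma~\ref{powers_to_interval} needs length below $\zeta_m$ and Lemma~\ref{convex_lemma} only lets us translate by $\Gamma_r$, so we are forced to take $m=m_0$ and pre-shrink to length $<\zeta_{m_0}$; fortunately the translation trick does this without disturbing membership of the $m_0$-th powers of the endpoints in $\Gamma_r$.
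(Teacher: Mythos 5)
Your proof is correct and follows essentially the same route as the paper's: choose $N$ with $\alpha^N,\beta^N\in\Gamma_r$, shrink to $(\alpha,\beta\gamma)$ with $\gamma\in\Gamma_r$ so the length is below $\zeta_N$, and then take the $N$-fold product set via Lemma~\ref{powers_to_interval} to land on the interval $(\alpha^N,\beta^N\gamma^N)\cap\Gamma_r$. You merely make explicit two steps the paper leaves implicit, namely the appeal to Lemma~\ref{convex_lemma} for definability of the shrunken set and the check that the resulting arc is nonempty and proper.
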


\begin{proof}
Let $N\in\N_+$ such that $\alpha^N,\beta^N\in\Gamma_r$, and take $\gamma\in\Gamma_r$ such that $l((\alpha,\beta\gamma))<\zeta_N$. Then $(\alpha^N,\beta^N\gamma^N)\cap\Gamma_r$ is an interval and is definable in $(\Gamma_r,\cdot,X)$ by Lemma~\ref{powers_to_interval}.
\end{proof}

%

\begin{lemma}\label{convex_to_interval}
Let $X$ be a finite union of convex sets whose end points are in $\<\Gamma_r\>$.  If $X$ is not cofinite in $\Gamma_r$, then an interval is definable in $(\Gamma_r,\cdot,X)$.
\end{lemma}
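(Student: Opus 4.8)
The plan is to reduce an arbitrary finite union of convex sets with end points in $\<\Gamma_r\>$ to a single convex set, and then invoke Corollary~\ref{powers_to_interval_cor} to obtain an honest interval. So write $X = C_1 \cup \dots \cup C_s$ where each $C_i$ is a convex set (possibly a singleton, which we discard since removing finitely many points is definable in $(\Gamma_r,\cdot)$) with end points in $\<\Gamma_r\>$; since $X$ is not cofinite, the complement contains a convex set too, so we may assume $s \geq 1$ and that the $C_i$ do not cover $\Gamma_r$. By taking unions of overlapping or adjacent intervals we may assume the $C_i$ are pairwise disjoint and no two are adjacent; then their cyclic arrangement around $\Gamma_r$ is well defined.

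The first key step is to isolate a single convex set $C_i$ from the union. The idea is to use multiplicative translates: if $\alpha,\beta$ are the end points of $C_i$ and $\lambda = \beta\alpha^{-1}$ is short enough compared to the gaps separating consecutive $C_j$'s, then for a suitable translating element $\mu \in \<\Gamma_r\>$ one has $C_i \cap \mu X = C_i$ while $C_j \cap \mu X$ is strictly smaller (indeed empty or a proper sub-convex piece) for $j \neq i$. More robustly: after passing to a power $Y = \{x_1\cdots x_k : x_i \in X\}$ via Lemma~\ref{powers_to_interval} one can shrink the convex pieces (their $k$-th powers shrink if they were short, or one first cuts them down using Lemma~\ref{convex_lemma} to make them as short as desired), so without loss we may assume every $C_i$ has length less than any prescribed $\zeta_N$. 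Once the pieces are very short relative to the gaps, intersecting $X$ with a translate $\mu X$ for generic $\mu$ destroys all cross-terms $C_i \cap \mu C_j$ for $i \neq j$ down to emptiness, leaving $\bigcup_i (C_i \cap \mu C_i)$, and by choosing $\mu$ close to $1$ one can keep exactly one $C_i$ intact while killing the rest — or, iterating, one peels off pieces one at a time. This produces a single convex set $C$ definable in $(\Gamma_r,\cdot,X)$, whose end points lie in $\<\Gamma_r \cup \{\text{end points of the }C_i\}\> = \<\Gamma_r\>$.

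Having produced a convex set $C = (\alpha,\beta)\cap\Gamma_r$ with $\alpha,\beta \in \<\Gamma_r\>$, we are exactly in the hypothesis of Corollary~\ref{powers_to_interval_cor}, which gives an interval definable in $(\Gamma_r,\cdot,C)$, hence in $(\Gamma_r,\cdot,X)$. That completes the proof.

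The main obstacle is the first key step — cleanly isolating one convex piece from the union. The subtlety is that a multiplicative translate $\mu X$ of the whole union can, a priori, have a piece $\mu C_j$ land on top of a different piece $C_i$, so the intersection $X \cap \mu X$ need not simplify. The fix is the length/gap control: one must first guarantee, using Lemma~\ref{convex_lemma} to cut the convex pieces down and Lemma~\ref{powers_to_interval} to take powers, that every convex piece is shorter than the minimal gap between consecutive pieces; then small generic translates cannot create overlaps between distinct pieces, and a careful (but finite) combinatorial bookkeeping over the cyclic order of the $C_i$ lets us remove them one by one. Tracking that the end points stay in $\<\Gamma_r\>$ throughout is routine given the corresponding clauses in Lemmas~\ref{convex_lemma}, \ref{powers_to_intervals}, and \ref{powers_to_interval}.
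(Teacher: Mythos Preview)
Your endgame --- reduce to a single convex piece and then invoke Corollary~\ref{powers_to_interval_cor} --- is exactly the paper's, but the reduction step has a genuine gap.

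First, the specific mechanism you describe does not do what you claim. If $\mu$ is close to $1$ then $\mu C_i$ overlaps $C_i$ for \emph{every} $i$, so $X\cap\mu X$ still has $s$ pieces, not one. Your preliminary ``shrinking'' is also off: the product construction of Lemma~\ref{powers_to_interval} sends $(\alpha,\beta)$ to $(\alpha^k,\beta^k)$, which is \emph{longer}, not shorter; and Lemma~\ref{convex_lemma} manipulates a single already-known set of the form $(\alpha,\beta)\cap\Gamma_r^{(m)}$, so it cannot be used to cut down the individual $C_i$ before you have isolated them.

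More seriously, no version of the intersection idea can handle the symmetric configuration in which all pieces have the same length and all gaps have the same length, i.e.\ $X=\bigl(\bigcup_{j=0}^{m-1}\zeta_m^{\,j}(\alpha_1,\beta_1)\bigr)\cap\Gamma_r$. For any $\mu\in\Gamma_r$ the set $X\cap\mu X$ is again of this form (or empty), so its number of convex components is either $0$ or $m$; iterating intersections with translates never drops the count below $m$. The paper isolates exactly this case (its Case~3) and dispatches it by observing $X^{(m)}=(\alpha_1^m,\beta_1^m)\cap\Gamma_r^{(m)}$ and applying Lemma~\ref{powers_to_intervals}. In the non-symmetric cases the paper goes in the opposite direction from yours: it takes \emph{unions} $X\cup\gamma X\cup\cdots\cup\gamma^N X$ with $\gamma$ chosen so that these translates fill the uniquely shortest gap while leaving some other gap open, merging two adjacent pieces and reducing $m$ by induction (when the gaps are all equal but the lengths are not, one passes to the complement first).
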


\begin{proof}

\medskip\noindent
After translating, we may assume that $1\notin X$. 

\medskip\noindent
Let $X=\bigcup_{i=1}^m C_i$ where  $C_i=(\alpha_i,\beta_i)\cap\Gamma_r$ with 
 \[\alpha_1<\beta_1\leq\alpha_2<\beta_2\leq\dots\leq\alpha_m<\beta_m.\]
We prove the result by induction on $m$. If $m=1$, then there is nothing to do. So suppose $m>1$.

\medskip\noindent
Put $\lambda_j=\beta_j\alpha_j^{-1}$ for all $j$, and $\delta_j=\alpha_{j+1}\beta_j^{-1}$  for $j=1,\dots,m$, where $\alpha_{m+1}:=\alpha_1$.
 
\medskip\noindent
Below we consider three cases and in each one we construct a convex set definable in $(\Gamma_r,\cdot,X)$. Moreover the end points of those convex sets will still be in $\<\Gamma_r\>$. Hence by using Corollary~\ref{powers_to_interval_cor}, we will be done.
 
\medskip\noindent
  \underline{Case 1:} The set $\{\delta_1,\dots,\delta_m\}$ is not a singleton.
  
\noindent
Let $\delta_{j_0}$ and $\delta_{j_{1}}$ be the two smallest elements of this set and let $\lambda=\min\{\delta_{j_1}\delta_{j_0}^{-1},\lambda_1,\dots,\lambda_m\}$. Take $\gamma\in(\lambda^{1/2},\lambda)\cap\Gamma_r$ and let $N>0$ be such that 
 \[\gamma<\gamma^2<\cdots<\gamma^{N-1}<\delta_{j_0}<\gamma^{N}.\] 
Note that $\gamma^{N}<\delta_{j_1}$ and hence the union
 \[
 Y:=X\cup \gamma X \cup\gamma^2 X\cup\dots\cup\gamma^N X
 \]
 is a union of at most $m-1$ convex sets, yet it has infinite complement in $\Gamma_r$. So by induction hypothesis, there is a nonempty proper convex subset $C$ of $\Gamma$ that is definable in $(\Gamma_r,\cdot,Y)$. Since $Y$ is definable in $(\Gamma_r,\cdot,X)$, the set $C$ is definable in $(\Gamma_r,\cdot,X)$.
 
 \medskip\noindent
  \underline{Case 2:} The set $\{\delta_1,\dots,\delta_m\}$ is a singleton, but the set $\{\lambda_1,\dots,\lambda_m\}$ is not a singleton.
 
  \noindent
  The set $Z=\Gamma_r\setminus X$ is a set as in Case 1. So a nonempty proper convex subset of $\Gamma$ is definable in $(\Gamma_r,\cdot,Z)$, hence in $(\Gamma_r,\cdot,X)$.

 \medskip\noindent
  \underline{Case 3:} Both $\{\delta_1,\dots,\delta_m\}$  and $\{\lambda_1,\dots,\lambda_m\}$ are singletons.
 
  \noindent
Note that we need to have $\delta_j\lambda_j=\zeta_m$ for each $j$. Hence $\alpha_{j+1}=\zeta_m\alpha_j$ and $\beta_{j+1}=\zeta_m\beta_j$ for each $j=1,\dots,m-1$. Therefore
 \[
 X=\big(\bigcup_{j=0}^{m-1}\zeta_m^j(\alpha_1,\beta_1)\big)\cap\Gamma_r.
 \]
So $X^{(m)}=(\alpha_1^m,\beta_1^m)\cap\Gamma^{(m)}$. Therefore a convex set is definable in $(\Gamma_r,\cdot,X)$ by Lemma~\ref{powers_to_intervals}.
\end{proof}

%
 

\medskip\noindent
Putting Proposition~\ref{def->convex} and Lemma~\ref{convex_to_interval} together, we get the desired result for unary definable sets.

\begin{proposition}\label{unary_reduct}
Let $X\subseteq\Gamma_r$ be definable in $\mathfrak G$, but not in $(\Gamma_r,\cdot)$. Then $(\Gamma_r,\cdot,X)$ is interdefinable with $\mathfrak G$.
\end{proposition}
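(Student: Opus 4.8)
The plan is to chain together the machinery developed in the section. We are given $X \subseteq \Gamma_r$ definable in $\mathfrak{G}$ but not in $(\Gamma_r, \cdot)$, and we must produce a definition of $P_r$ (equivalently, by the Corollary after Lemma~\ref{convex_lemma}, of any single interval, or of the orientation $\Cal O$) inside $(\Gamma_r, \cdot, X)$. First I would invoke Proposition~\ref{def->convex}: there is $N > 0$ and $L \in \{0, \dots, N-1\}$ such that $(h(L)X)^{1/N}$ is a finite union of convex sets and singletons whose end points lie in $\<\Gamma_r\>$, and neither it nor its complement is finite. Since raising to the $N$-th power and translating by $h(-L)$ are operations available in $(\Gamma_r,\cdot)$, the set $(h(L)X)^{1/N}$ is definable in $(\Gamma_r,\cdot,X)$; discarding the finitely many singleton points (a first-order operation) leaves a finite union of convex sets with end points in $\<\Gamma_r\>$ that is not cofinite in $\Gamma_r$, still definable in $(\Gamma_r,\cdot,X)$.

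Next I would feed that set into Lemma~\ref{convex_to_interval}, which says precisely that from such a set one can define an interval (i.e.\ a convex set whose end points lie in $\Gamma_r$ itself) in $(\Gamma_r,\cdot,X)$. At this stage $(\Gamma_r,\cdot,X)$ defines some interval $I = (\alpha,\beta)\cap\Gamma_r$ with $\alpha,\beta\in\Gamma_r$. The final step is to recover $P_r$ from an arbitrary interval: by Lemma~\ref{convex_lemma} (with $m=1$, so $\Gamma_r^{(1)} = \Gamma_r$), from the interval $I$ one can define in $(\Gamma_r,\cdot,I)$ every translate $(\alpha\gamma_1, \beta\gamma_2)\cap\Gamma_r$ with $\gamma_1,\gamma_2\in\Gamma_r$; in particular one can translate and ``rescale'' to obtain the specific interval $(h(-1),1)\cap\Gamma_r = P_r$ — more precisely, using unions of translates of $I$ as in the proof of Lemma~\ref{convex_lemma} one builds $P_r$ itself, since $P_r$ and $I$ are both intervals of $\Gamma_r$ and $\Gamma_r$ is generated over any nontrivial interval by the group operation and translation. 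Hence $P_r$ is definable in $(\Gamma_r,\cdot,X)$, which together with the trivial fact that $X$ is definable in $\mathfrak{G}$ gives interdefinability.

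Concretely, then, the proof is: $X$ definable in $(\Gamma_r,\cdot,X)$ trivially; $\mathfrak{G}$-definability of $X$ by hypothesis; Proposition~\ref{def->convex} produces a non-cofinite finite union of convex sets with end points in $\<\Gamma_r\>$ definable in $(\Gamma_r,\cdot,X)$; Lemma~\ref{convex_to_interval} upgrades this to an interval definable in $(\Gamma_r,\cdot,X)$; Lemma~\ref{convex_lemma} and the Corollary following it show any interval defines all intervals, in particular $P_r$, so $(\Gamma_r,\cdot,X)$ defines $\mathfrak{G}$.

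The main obstacle I anticipate is not in the logical chaining — which is essentially bookkeeping once the lemmas are in place — but in being careful about the two directions of the equivalence and about what ``interdefinable'' requires: one must check both that every $\mathfrak{G}$-definable set is definable in $(\Gamma_r,\cdot,X)$ (which follows once $P_r$ is defined, since $\mathfrak{G} = (\Gamma_r,\cdot,{}^{-1},h(1),P_r)$ and the group structure and $h(1)$ are already present) and that $X$ is $\mathfrak{G}$-definable (the hypothesis). A secondary subtlety is making sure the passage from ``a convex set with end points in $\<\Gamma_r\>$'' through Corollary~\ref{powers_to_interval_cor} to ``an interval with end points in $\Gamma_r$'' is legitimately absorbed into the cited lemmas rather than redone here; but since Lemma~\ref{convex_to_interval} already delivers an interval, this is handled upstream.
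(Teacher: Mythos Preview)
Your proposal is correct and matches the paper's approach exactly: the paper states the proposition as an immediate consequence of combining Proposition~\ref{def->convex} with Lemma~\ref{convex_to_interval}, and the remaining step (recovering $P_r$ from an arbitrary interval via Lemma~\ref{convex_lemma} with $m=1$) is precisely the content of the Corollary following that lemma. Your write-up simply unpacks this chain in more detail than the paper does.
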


\medskip\noindent
Now we handle the general case.

\begin{theorem}\label{no_reducts}
Let $X\subseteq\Gamma_r^n$ be definable in $\mathfrak G$, but not in $(\Gamma_r,\cdot)$. Then $(\Gamma_r,\cdot,X)$ is interdefinable with $\mathfrak G$.
\end{theorem}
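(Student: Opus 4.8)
The plan is to reduce the $n$-ary case to the unary case already handled in Proposition~\ref{unary_reduct}. Given $X \subseteq \Gamma_r^n$ definable in $\mathfrak{G}$ but not in $(\Gamma_r, \cdot)$, the first step is to apply Proposition~\ref{open_affine_prop}: there is $N \in \mathbb{N}_+$ such that for every $\vec{b} \in \mathbb{Z}^n$ the set $(h(\vec b)X)^{1/N}$ is a union of an open set and finitely many subsets of affine sets. Using the decomposition of Lemma~\ref{union_of_powers}(4), namely $X = \bigcup_{\vec a} h(-\vec a) X_{\vec a}$ where $X_{\vec a} = h(\vec a) X \cap (\Gamma_r^n)^{(N)}$ and $\vec a$ ranges over $(\{0,\dots,N-1\})^n$, I can reduce to showing that $\mathfrak{G}$ is definable in $(\Gamma_r, \cdot, X)$ provided \emph{some} $X_{\vec a}$ is not definable in $(\Gamma_r,\cdot)$ — which must happen, since $X$ itself is not. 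So without loss of generality $X = (h(\vec b)X)^{1/N}$ is a union of an open set $O$ and finitely many subsets of affine sets, and $X$ is not definable in $(\Gamma_r,\cdot)$.

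The second step is to isolate the ``genuinely open'' content. Each affine set $Y \subseteq \Gamma_r^n$ with $n > 1$ admits a coordinate projection $\pi : \Gamma_r^n \to \Gamma_r^d$ with $d < n$ restricting to a bijection on $Y$; the finitely many affine pieces therefore lie in a lower-dimensional definable set, and by an inductive argument on $n$ I can assume they contribute nothing new — the obstruction to $X$ being group-definable must come from the open part $O$. Now $O$ is a positive Boolean combination of sets of the form $\{\vec x : (\vec x^{\vec k})^{N/m} \in h(t)I\}$ with $I \in \{P_r, Q_r\}$, i.e.\ preimages under the group homomorphism $\vec x \mapsto \vec x^{\vec k}$ of unary convex sets. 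The key point is that for at least one such $\vec k$ (with, say, $\gcd$ of entries relevant) the corresponding unary slice is not group-definable. Fixing all but one variable appropriately — using saturation-free genericity inside $\Gamma_r$ — produces a unary definable $X' \subseteq \Gamma_r$ in $(\Gamma_r, \cdot, X)$ that is not definable in $(\Gamma_r, \cdot)$. Then Proposition~\ref{unary_reduct} gives that $(\Gamma_r, \cdot, X') $, hence $(\Gamma_r, \cdot, X)$, is interdefinable with $\mathfrak{G}$, and since $X$ is definable in $\mathfrak{G}$ by hypothesis the interdefinability follows.

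The main obstacle I expect is the step of extracting a \emph{genuinely non-group-definable unary slice} from $X$: a priori, fixing $n-1$ coordinates of $X$ at specific values of $\Gamma_r$ could always land in a group-definable slice even though $X$ itself is not group-definable (the non-definability being ``spread across'' the coordinates). Handling this cleanly likely requires first passing to $X^{(N)}$ vs.\ $X^{1/N}$ to make the defining data homomorphic, then using that a finite positive Boolean combination of preimages of convex sets under distinct characters $\vec x \mapsto \vec x^{\vec k}$ is group-definable only if each convex set involved is finite or cofinite — a statement one proves by choosing the fixed coordinates to make the relevant characters ``independent'' so that the slice reproduces one convex piece faithfully. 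Once that combinatorial separation is in place, the reduction to Proposition~\ref{unary_reduct} is routine.
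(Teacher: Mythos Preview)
Your skeleton matches the paper's proof: induction on $n$, invoke Proposition~\ref{open_affine_prop} to write each $(h(\vec a)X)^{1/N}$ as an open set plus finitely many subsets of affine sets, and dispose of the affine pieces via the projection to $\Gamma_r^d$ with $d<n$ and the inductive hypothesis. Where you diverge is precisely at the obstacle you flag, and your proposed resolution is both harder and shakier than what the paper actually does. The claim that a positive Boolean combination of character-preimages of convex sets is group-definable only if each convex set is finite or cofinite is not true as stated (different convex pieces can cancel), and making the characters ``independent'' by a choice of fixed coordinates is not a well-defined procedure here.

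The paper avoids all of this. After isolating the part $X^*$ of $X$ whose $(h(\vec a)\,\cdot\,)^{1/N}$-pieces have nonempty interior (and checking the complement $Y=\Gamma_r^n\setminus X^*$ also has such a piece, else one is back in the purely affine situation for $Y$), it simply looks at the fibers
\[
X^*_{\vec\gamma}=\{\delta\in\Gamma_r:(\vec\gamma,\delta)\in (h(\vec a)X^*)^{1/N}\},\qquad \vec\gamma\in\Gamma_r^{n-1},
\]
and runs a dichotomy. Either some $X^*_{\vec\gamma}$ and its complement both have nonempty interior, in which case that fiber is a unary set definable in $(\Gamma_r,\cdot,X)$ but not in $(\Gamma_r,\cdot)$, and Proposition~\ref{unary_reduct} finishes; or every fiber is trivial ($\emptyset$ or $\Gamma_r$), in which case the support set $Z=\{\vec\gamma:X^*_{\vec\gamma}=\Gamma_r\}\subseteq\Gamma_r^{n-1}$ is definable in $(\Gamma_r,\cdot,X)$ and cannot be group-definable (else $X$ would be), so the inductive hypothesis applies to $Z$. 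This is exactly the scenario you worried about---all unary slices being group-definable---handled not by forcing a good slice to exist but by pushing the non-group-definability down to the $(n-1)$-dimensional base. Replace your character-combinatorics plan with this fiber/support-set dichotomy and the proof goes through.
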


\begin{proof}
We proceed by induction on $n$. Proposition~\ref{unary_reduct} serves as the case $n=1$. So let's assume that $n>1$ and that the result holds for $m<n$.

\medskip\noindent
So let $X\subseteq\Gamma_r^n$ be definable in $\mathfrak G$, and take $N\in\N_+$ such that for every $\vec a\in\Z^n$, the set  $(h(\vec a)X)^{1/N}$ is a union of an open set and finitely many sets contained in affine sets. Write 
 \[
 X=\bigcup_{\vec a\in A}h(-\vec a)(h(\vec a)X\cap(\Gamma_r^{(N)})^n),
 \]
 where $A$ is a finite subset of $\Z^n$.  It is easy to see that for each $\vec a$, the set $(h(\vec a)X)^{1/N}$ has empty interior if and only if $h(\vec a)X\cap(\Gamma_r^{(N)})^n$ is contained in a finite union of affine sets.

\medskip\noindent
First suppose that $(h(\vec a)X)^{1/N}$ has empty interior for each $\vec a\in A$. Then $(h(\vec a)X)^{1/N}$ is a finite union of sets contained in an affine set and each of those sets is in bijection with a subset of $\Gamma_r^d$ for some $d<n$ via a projection. If all of those subsets of $\Gamma_r^d$ are definable in $(\Gamma_r,\cdot)$, then so is $(h(\vec a)X)^{1/N}$. This cannot be correct for all $\vec a$, because $X$ is not definable in $(\Gamma_r,\cdot)$. Hence we obtain a subset of $\Gamma_r^d$ that is not definable in $(\Gamma_r,\cdot)$, but is definable in $(\Gamma_r,\cdot,X)$. So we obtain the result using the induction hypothesis. 

\medskip\noindent
So we assume that the set $A^*$ of $\vec a\in A$ such that $(h(\vec a)X)^{1/N}$ has nonempty interior is nonempty.  Hence the set
 \[
 X^*=X\bigcup_{\vec a\in  A^*}{h(-\vec a)(h(\vec a) X\cap(\Gamma^{(N)})^n)}
 \]
is definable in $(\Gamma_r,\cdot,X)$ and $(h(\vec a)X^*)^{1/N}$ has nonempty interior for each $\vec a\in A^*$. Put $Y:=\Gamma_r\setminus X^*$. If $(h(\vec a)Y)^{1/N}$ has empty interior for each $\vec a\in A^*$, then we may proceed as in the previous case. Therefore we may assume that $(h(\vec a)Y)^{1/N}$ has nonempty interior for some $\vec a\in A^*$.

\medskip\noindent
For $\vec\gamma\in\Gamma_r^{n-1}$, consider the sets
 \[
  X^*_{\vec\gamma}:=\{\delta\in\Gamma_r:(\vec\gamma,\delta)\in (h(\vec a)X^*)^{1/N}\},
 \]
 \[
  Y_{\vec\gamma}:=\{\delta\in\Gamma_r:(\vec\gamma,\delta)\in (h(\vec a)Y)^{1/N}\}
 \]
For each $\vec\gamma$, these sets are either empty or has nonempty interior. If there is $\vec\gamma\in\Gamma_r^{n-1}$ such that both $X^*_{\vec\gamma}$ and $Y_{\vec\gamma}$ have nonempty interior, then $X^*_{\vec\gamma}$ is not definable in $(\Gamma_r,\cdot)$, and we are done using the induction hypothesis. Otherwise for each $\vec\gamma\in\Gamma_r^{n-1}$, the set $X^*_{\vec\gamma}$ is either empty or is $\Gamma_r$. Then the set $Z:=\{\vec\gamma:X^*_{\vec\gamma}=\Gamma_r\}$ is a  subset of $\Gamma_r^{n-1}$ definable in $(\Gamma_r,\cdot,X)$. If $Z$ is definable in $(\Gamma_r,\cdot)$, then so are $(h(\vec a)X^*)^{1/N}$ and $X$. Therefore $Z$ is not definable in $(\Gamma_r,\cdot)$ and once again we are done by the induction hypothesis.

\end{proof}


\begin{thebibliography}{1}

\bibitem{auto}
Jean-Paul Allouche and Jeffrey Shallit.
\newblock {\em Automatic sequences}.
\newblock Cambridge University Press, Cambridge, 2003.
\newblock Theory, applications, generalizations.

\bibitem{Conant_multiplicative}
Gabriel Conant.
\newblock Multiplicative structure in stable expansions of the group of
  integers.
\newblock {\em Illinois J. Math.}, 62(1-4):341--364, 2018.

\bibitem{Conant_no_intermediate}
Gabriel Conant.
\newblock There are no intermediate structures between the group of integers
  and {P}resburger arithmetic.
\newblock {\em J. Symb. Log.}, 83(1):187--207, 2018.

\bibitem{thesis_ayhan}
Ayhan G\"{u}nayd{\i}n.
\newblock {\em Model Theory of Fields with Multiplicative Groups}.
\newblock PhD thesis, University of Illinois at Urbana-Champaign, 2008.

\bibitem{Kirby_book}
Jonathan Kirby.
\newblock {\em An invitation to model theory}.
\newblock Cambridge University Press, Cambridge, 2019.

\bibitem{Lambotte-Point}
Quentin Lambotte and Fran\c{c}oise Point.
\newblock On expansions of {$({\bf Z},+,0)$}.
\newblock {\em Ann. Pure Appl. Logic}, 171(8):102809, 36, 2020.

\bibitem{Palacin-Sklinos}
Daniel Palac\'{\i}n and Rizos Sklinos.
\newblock On superstable expansions of free {A}belian groups.
\newblock {\em Notre Dame J. Form. Log.}, 59(2):157--169, 2018.

\bibitem{Poizat_Exp_Integers}
Bruno Poizat.
\newblock Superg\'{e}n\'{e}rix.
\newblock {\em J. Algebra}, 404:240--270, 2014.
\newblock \`A la m\'{e}moire d'\'{E}ric Jaligot. [In memoriam \'{E}ric
  Jaligot].

\bibitem{Tran-Walsberg_expansions_of_Z}
M.C. Tran and E.~Walsberg.
\newblock A family of $d_p$-minimal expansions of the additive group of
  integers.
\newblock Preprint.

\end{thebibliography}
\end{document}